\newcommand{\R}{\mathbb R}
\newcommand\veb{{\ve b}}
\newcommand\vecc{{\ve c}}
\newcommand\ved{{\ve d}}
\newcommand\veg{{\ve g}}
\newcommand\ver{{\ve r}}
\newcommand\ves{{\ve s}}
\newcommand\veu{{\ve u}}
\newcommand\vev{{\ve v}}
\newcommand\vew{{\ve w}}
\newcommand\vex{{\ve x}}
\newcommand\vey{{\ve y}}
\newcommand\vez{{\ve z}}
\newcommand{\FourTiTwo}{{\tt 4ti2}}
\newcommand{\eoproof}{\hspace*{\fill} $\square$ \vspace{5pt}}
\newenvironment{customthm}[1]
  {\innercustomthm}
  {\endinnercustomthm}
\DeclareMathOperator{\rank}{rank}
\newcommand{\cross}{\times}
\DeclareMathOperator{\supp}{supp}
\def\ve#1{\mathchoice{\mbox{\boldmath$\displaystyle\bf#1$}}
{\mbox{\boldmath$\textstyle\bf#1$}}
{\mbox{\boldmath$\scriptstyle\bf#1$}}
{\mbox{\boldmath$\scriptscriptstyle\bf#1$}}}
\title{A Polyhedral Model for Enumeration and Optimization over the Set of Circuits}
\author{Steffen Borgwardt\inst{1}\and Charles Viss\inst{2}}
\institute{\email{\href{mailto:steffen.borgwardt@ucdenver.edu}{steffen.borgwardt@ucdenver.edu}};
University of Colorado Denver \and
\email{\href{mailto:charles.viss@ucdenver.edu}{charles.viss@ucdenver.edu}};
University of Colorado Denver 
}
\date{\today}
\begin{document}

\maketitle

\begin{abstract}
Circuits play a fundamental role in polyhedral theory and linear programming. For instance, circuits are used as step directions in various augmentation schemes for solving linear programs or to leave degenerate vertices while running the simplex method. However, there are significant challenges when implementing these approaches: The set of circuits of a polyhedron may be of exponential size and is highly sensitive to the representation of the polyhedron. 

In this paper, we provide a universal framework for enumerating the set of circuits and optimizing over sets of circuits of a polyhedron in any representation---we propose a polyhedral model in which the circuits of the original polyhedron are encoded as extreme rays or vertices. Many methods in the literature and software assume that a polyhedron is in standard form; our framework is a direct generalization. We demonstrate its value by showing that the conversion of a general representation to standard form may introduce exponentially many new circuits.

We then discuss the main advantages of the generalized polyhedral model. It enables the direct enumeration of useful subsets of circuits such as strictly feasible circuits or sign-compatible circuits, as well as optimization over these sets. In particular, this leads to the efficient computation of a steepest-descent circuit, which can be used in an augmentation scheme for solving linear programs or the construction of sign-compatible circuit walks with additional properties.
\end{abstract}

\noindent {\bf{Keywords}:} circuits, linear programming, polyhedra\\\\
{\bf{MSC}: 52B05, 90C05, 90C08, 90C10}

\section{Introduction}

In his seminal paper \cite{g-75}, Jack Graver formalized the concept of \textit{universal test sets} for families of integer and linear programs. These test sets can be used to either verify the optimality of a given solution or find a strictly improving search direction. In the context of integer programming, such a set has become known as the \textit{Graver basis} of an integer matrix.  In this paper, our interest lies in the enumeration and optimization over Graver's universal test set for a linear program: the so-called \textit{circuits} (\Cref{def:circuits}) of the underlying polyhedron. 

Circuits were first introduced by Rockafellar \cite{r-69} as the \textit{elementary vectors} of a subspace, highlighting their role in matroid theory. Namely, the circuits of a linear matroid correspond to the inclusion-minimal sets of linearly dependent columns of a matrix. In the field of mathematical biology, circuits appear as \textit{elementary modes} and are used to describe and analyze pathways in metabolic networks \cite{gk-04,hks-08,k-17,mr-16}. Additionally, circuits are relevant to the study of the \textit{diameters} of polyhedra and the efficiency of the simplex method \cite{bfh-14,bdf-16}. As potential edge directions of a polyhedron, circuits provide a scheme for walking between vertices of a polyhedron in a manner that generalizes edge walks. Although the famous Hirsch conjecture bounding the combinatorial diamter of polyhedra is false in general \cite{s-11}, the related Circuit Diameter Conjecture \cite{bfh-14} remains an open area of research.

Recently, circuits have been used to develop \textit{augmentation schemes} for solving integer and linear programs \cite{dhl-15,hor-13,how-11}. For instance, the so-called \textit{steepest-descent} augmentation scheme of \cite{dhl-15} generalizes the minimum-mean cycle canceling algorithm and can be applied to any bounded linear program \cite{gdl-15}. The \textit{deepest-descent} augmentation scheme of \cite{dhl-15} is guaranteed to take only polynomially many steps \cite{how-11}. However, although several of these algorithms yield promising convergence bounds, a major challenge in implementing them is actually computing the required circuit directions. 

To address this challenge, we are interested in both the enumeration and optimization over sets of circuits. Since a polyhedron may have exponentially many circuits, complete circuit enumeration is hard in general. However, it is open whether or not the problem is solvable in \textit{polynomial total-time} \cite{k-08}, i.e., if the output can be generated in time that is polynomial in both the input and output sizes. On the other hand, methods to directly optimize over  circuits could provide implementations of these circuit augmentation schemes without needing to completely enumerate the set of circuits.

\subsection{Circuit Enumeration}

Several ways to enumerate circuits appear in the literature. For instance, as the universal test set for linear programs proposed by Graver \cite{g-75}, the set of circuits is also known as the \textit{LP Graver test set} of a polyhedron and is a subset of the related IP Graver basis. Thus, methods for enumerating Graver bases may also be used to enumerate circuits \cite{h-03}. Such algorithms are typically variations of Pottier's  geometric completion procedure \cite{p-96}, an $n$-dimensional generalization of Euclid's algorithm. The goal is to find a generating set of vectors for an integer lattice that is minimal with respect to a certain partial order, where the order depends on whether the LP or IP Graver test set is desired~\cite{h-03}. To do this, the algorithms first compute generating sets containing all minimal elements and then remove those elements which are not minimal. Other algorithms enumerate these sets by computing the Gr\"{o}bner bases of certain toric or lattice ideals \cite{st-97}; these approaches can also be translated into Pottier's algorithm \cite{dhk-13}. A significant challenge when implementing these schemes is the sheer size of the initially computed generating sets \cite{o-10}. Although significant speed-up can be achieved using a project-and-lift approach \cite{h-02}, we seek a more direct algorithm for enumerating only the LP Graver test set of a polyhedron.

Since circuits correspond to the elementary modes of a metabolic network, the circuit enumeration problem is also of interest in the field of mathematical biology \cite{gk-04,hks-08,k-17,mr-16}. When a metabolic network is formulated as a polyhedral cone, elementary modes appear as extreme rays. Hence, most current methods for enumerating elementary modes are variations of the Double Description method \cite{gk-04}; implementations are available through the {\tt efmtool} \cite{efmtool} software package.

A significant challenge that arises when enumerating circuits---one that has remained largely unaddressed in the literature---is the fact that the set of circuits is highly sensitive to the representation of the polyhedron. Most current methods for enumerating circuits, such as the state-of-the-art software package \FourTiTwo \ \cite{4ti2}, the LP Graver test set enumeration approaches of \cite{h-03,st-97}, and the Double Description methods for elementary mode enumeration from \cite{gk-04,efmtool}, assume that the input is a polyhedron in standard form: $P = \{ \vex \in \R^n \colon A \vex = \veb, \ \vex \geq \ve0\}$. Additionally, the circuit augmentation schemes of \cite{dhl-15,hor-13,how-11} are described only for polyhedra in standard form. However, as we demonstrate in \Cref{sec:representation}, simply converting a general polyhedron to standard form can introduce exponentially many unwanted directions to its set of circuits, making the already difficult computational task of circuit enumeration even harder. Thus, we are interested in generalized schemes for enumerating the circuits of a polyhedron in any representation.

\subsection{Main Results}

In this paper, we provide a universal framework for the enumeration and optimization over circuits of a general polyhedron $P = \{ \vex \in \R^n \colon A \vex = \veb, B \vex \leq \ved \}$. We begin in \Cref{sec:representation} with a discussion on the relationship between the set of circuits of a polyhedron and its representation. First, we recall the definition of the set of circuits $\mathcal{C}(A,B)$ of $P$ as given in \cite{bfh-14}:
\begin{customthm}{1}
The set of \textit{circuits} $\mathcal{C}(A,B)$ of a polyhedron $P = \{ \vex \in \R^n \colon A \vex = \veb, B \vex \leq \ved \}$ consists of all $\veg \in \ker(A) \setminus \{ \ve 0 \}$ normalized to coprime integer components for which $B \veg$ is support-minimal over $\{B \vex \colon \vex \in \ker(A) \setminus \{\ve0 \} \}$. 
\end{customthm}
\noindent This definition suggests a naive algorithm for enumerating $\mathcal{C}(A,B)$, which we provide in \Cref{alg:naive} as a baseline approach for the general circuit enumeration problem.

Next, we explore the effect that a change in representation has on the set of circuits of $P$. Recall that current methods for enumerating circuits assume that the input polyhedron is in standard form  \cite{4ti2,gk-04,h-03,st-97,efmtool}. We demonstrate in \Cref{alg:standard_form} how to use existing software to enumerate $\mathcal{C}(A,B)$ as a subset of the circuits of a related standard form polyhedron. However, we show that a shortcoming of this approach is that it can require the enumeration and processing of exponentially many unwanted directions.

For a general polyhedron $P = \{ \vex \in \R^n \colon A \vex = \veb, B \vex \leq \ved \}$, a conversion to standard form $P'$ involves both the introduction of slack variables and the split of free variables into their positive and negative parts:
\begin{align*}
P' = \{ (\vex^+, \vex^-, \ves) \in \R^{2n + m_B} \colon A(\vex^+ - \vex^-) = \veb, \ B(\vex^+ - \vex^-) + I \ves = \ved, \  \vex^+, \vex^-, \ves \geq \ve0 \}.
\end{align*} 
In \Cref{thm:representation}, we prove that in a worst-case scenario, performing this conversion increases the number of circuits of the polyhedron by more than a factor of $2^n$. A smarter approach---taken by \Cref{alg:standard_form}---is to only consider a particular submatrix of the constraint matrix from this standard form representation. In doing so, the original circuits of $P$ are preserved while many fewer unwanted circuits are introduced.  Nevertheless, we prove in \Cref{thm:standard_form_conversion} that this still may require the computation and post-processing of exponentially many additional directions. 

To provide an alternative to the naive \Cref{alg:naive} and the indirect \Cref{alg:standard_form}, we propose in \Cref{sec:model} a polyhedral model for representing the set of circuits of the general polyhedron $P$. Specifically, we prove in \Cref{thm:circuits_as_rays} that the circuits of $P$ appear as extreme rays of the following cone $C_{A,B}$:
\begin{align*}
C_{A,B} = \{(\vex, \vey^+, \vey^-) \in \R^{n + 2m_B} \colon A\vex = \ve0, \ B\vex = \vey^+ - \vey^-, \ \vey^+, \vey^- \geq \ve0\}.
\end{align*}
By intersecting $C_{A,B}$ with a normalizing hyperplane, we show in \Cref{thm:circuits_as_vertices} that the circuits of $P$ therefore appear as vertices in the polytope $P_{A,B}$:
\begin{align*}
P_{A,B} = \{(\vex, \vey^+, \vey^-) \in \R^{n + 2m_B} \colon A\vex = \ve0, \ B\vex = \vey^+ - \vey^-, \ ||\vey^+||_1 + ||\vey^-||_1 = 1,   \  \vey^+, \vey^- \geq \ve0\}.
\end{align*}
These results generalize previous standard form models in the literature used to enumerate elementary modes of a metabolic network \cite{gk-04} and to compute augmenting search directions at degenerate vertices in linear programs \cite{gdl-15}. In this generalization, we take into account the more technical details of circuits of general polyhedra compared to standard form.

The polyhedral models $C_{A,B}$ and $P_{A,B}$ may be used to directly enumerate $\mathcal{C}(A,B)$ via any extreme ray or vertex enumeration scheme. We formally outline the procedure in \Cref{alg:polyhedral_model}. This result has important theoretical implications regarding the computational complexity of circuit enumeration: the existence of a polynomial total-time extreme ray or vertex enumeration algorithm would immediately imply the existence a polynomial total-time algorithm for general circuit enumeration. Although vertex enumeration is provably hard for unbounded polyhedra, it is open whether or not a polynomial total-time enumeration scheme exists for bounded polytopes such as $P_{A,B}$ \cite{k-08}.

In \Cref{sec:advantages}, we describe some powerful applications of the polyhedral model $C_{A,B}$. First, we show that certain faces of the model can be used to represent specific subsets of $\mathcal{C}(A,B)$ (\Cref{sec:subset_enumeration}). In particular, given a point $\vex_0 \in P$, we show in \Cref{sec:feasible} that the model may be used to represent only those circuits which are \textit{strictly feasible} at $\vex_0$ with respect to $P$ (\Cref{thm:strictly_feasible}). Similarly, given a direction $\veu \in \ker(A) \setminus \{\ve0\}$, we show in \Cref{sec:sign_compatible} that a face of $C_{A,B}$ may be used to represent only those circuits which are \textit{sign-compatible} with $\veu$ (\Cref{thm:sign-compatible_rays}). Hence, the model may be used to directly enumerate only those circuits of $P$ which exhibit either feasibility or sign-compatibility. This is a vast improvement over a naive approach of enumerating all such circuits in which one uses \Cref{alg:naive} or \Cref{alg:standard_form} to enumerate all of $\mathcal{C}(A,B)$ and then removes those circuits which do not exhibit the desired property via post-processing. 

Another advantage of the polyhedral model is that it enables direct optimization over sets of circuits (\Cref{sec:optimization_over_circuits}). We show in \Cref{sec:steepest_descent} that this allows for the efficient computation of a generalized \textit{steepest-descent circuit} (\Cref{def:steepest}) at any given point in the polyhedron (\Cref{thm:steepest-descent}). Thus, the model may be used to implement a generalization of the steepest-descent augmentation scheme from \cite{dhl-15} for solving linear programs. While the original scheme in \cite{dhl-15} applies only to standard form polyhedra and relies on an oracle for providing  steepest-descent circuits, our generalized approach can be applied to polyhedra in any representation. We note that this approach also generalizes the framework of \cite{gdl-15} for computing augmenting search directions when solving linear programs. 

We prove that our generalized steepest-descent augmentation algorithm exhibits the same desirable behaviors as that of \cite{dhl-15} (\Cref{lem:steepest,lem:non_increasing,lem:sign_change}). In particular, the scheme is guaranteed to take at most $|\mathcal{C}(A,B)|$ augmenting steps (\Cref{cor:steepest_descent_bound}). Using our polyhedral model to compute the required steepest-descent circuits, it follows that the algorithm terminates in strongly polynomial time when applied to a general polyhedron defined by a totally unimodular matrix (\Cref{thm:weakly_polynomial}).

Finally, in \Cref{sec:construct_sign_compatible_sums}, we show how  the polyhedral model can be used in the construction of \textit{sign-compatible circuit walks} between pairs of points in a polyhedron. In the context of combinatorial optimization, such walks correspond to short, intuitive sequences of transitions between solutions of a linear or integer program. A general framework for constructing these walks is outlined in \Cref{alg:construct_sum}. By using our polyhedral model to compute steepest-descent sign-compatible circuits, we can use this algorithm to efficiently construct \textit{$\vecc$-steepest sign-compatible circuit walks} (\Cref{def:f_optimal}) with respect to a given $\vecc \in \R^n$ (\Cref{cor:f_optimal_walks}). In the case where the polyhedron is given by a totally unimodular matrix, we show that all such walks visit only integral points (\Cref{cor:tu_walks}).  

Proof-of-concept implementations of our algorithms for circuit enumeration, steepest-descent~augmentation, and sign-compatible circuit walk construction can be found at \href{url}{https://github.com/charles- viss/circuits}. We also provide small programs to test the enumeration algorithms on randomly generated polyhedra and on dual transportation polyhedra. In such tests, a conversion to standard form (\Cref{alg:standard_form}) is generally not competitive with the naive \Cref{alg:naive} or our polyhedral model approach of \Cref{alg:polyhedral_model}. Furthermore, the naive algorithm is outperformed by our polyhedral model when enumerating only those circuits which are sign-compatible with a random direction. Lastly, the repository contains small toy examples which illustrate how the computation of steepest-descent circuits via our model can be used to solve general linear programs and construct $\vecc$-steepest sign-compatible circuit walks.

\section{Circuits and Polyhedra Representation}\label{sec:representation}

Consider a general polyhedron of the form $P = \{ \vex \in \R^n \colon A \vex = \veb, B \vex \leq \ved \}$ where $A \in \R^{m_A \cross n}$ and $B \in \R^{m_B \cross n}$. We assume $\rank \binom{A}{B} = n$, so that $P$ is pointed. The \textit{set of circuits $\mathcal{C}(A,B)$ of $P$}, as defined in \cite{bfh-14}, is dependent on the constraint matrices $A$ and $B$.
\begin{definition}\label{def:circuits}
The set of \textit{circuits} $\mathcal{C}(A,B)$ of a polyhedron $P = \{ \vex \in \R^n \colon A \vex = \veb, B \vex \leq \ved \}$ consists of all $\veg \in \ker(A) \setminus \{ \ve 0 \}$ normalized to coprime integer components for which $B \veg$ is support-minimal over $\{B \vex \colon \vex \in \ker(A) \setminus \{\ve0 \} \}$. 
\end{definition}
\noindent When $m_A = 0$, it is assumed that $\ker(A) = \R^n$ and we denote the set of circuits $\mathcal{C}_\leq (B)$. For a polyhedron $P = \{ \vex \in \R^n \colon A \vex = \veb, \ \vex \geq 0\}$ in standard form, the set of circuits is denoted $\mathcal{C}(A)$ and simply consists of support-minimal vectors in $\ker(A) \setminus \{\ve0\}$. 

Geometrically, circuits correspond to the directions of one-dimensional subspaces obtained by intersecting any subset of $\dim(P)-1$ facets of $P$ with linearly independent outer normals. It follows that $\mathcal{C}(A,B)$ consists of all potential edge directions of $P$ as the right-hand side vectors $\veb$ and $\ved$ vary \cite{g-75}. Thus, the set of circuits of a polyhedron is symmetric: $\veg$ is a circuit if and only if $-\veg$ is a circuit. Also, note that the normalization used in Definition~\ref{def:circuits} is rather arbitrary---circuits correspond to directions, so for general purposes, any normalization that results in a unique representative modulo positive scalar multiplication can be used. For this reason, we call any positive scalar multiple of a circuit $\veg \in \mathcal{C}(A,B)$ a \textit{circuit direction} of $P$. 

\subsection{A Naive Circuit Enumeration Algorithm}

The fact that circuits correspond to one-dimensional intersections of facets immediately implies the following naive \Cref{alg:naive} for enumerating $\mathcal{C}(A,B)$ which requires only Gaussian elimination. Informally, the algorithm computes the intersections of all subsets of facets of $P$ with size $\dim(P)-1$, obtaining a pair of opposite circuit directions for each intersection with dimension one. The correctness of \Cref{alg:naive} is implied by the following lemma, which can be derived from Proposition 1 in \cite{kps-17}. We include a simple proof for the sake of completeness.

\begin{algorithm}
\caption{Naive Circuit Enumeration}\label{alg:naive}
\begin{algorithmic}[1]
\Procedure{NaiveCircuits}{$A,B$}\Comment{Computes $\mathcal{C}(A,B)$}
\State $S\gets \emptyset$
\For{each $I \subseteq \{1,...,m_B\}$ where $|I| = n - \rank(A) - 1$}
\State $B_I \gets$ the row submatrix of $B$ indexed by $I$
\If{$\rank \binom{A}{B_I} = n-1$}
\State $\veg \gets$ any $\vex \in \ker\binom{A}{B_I} \setminus \{\ve0\}$ normalized to coprime integer components
\If{$\veg \notin S$}
\State $S \gets S \cup \{\veg, -\veg\}$
\EndIf
\EndIf
\EndFor
\State \textbf{return} $S$
\EndProcedure
\end{algorithmic}
\end{algorithm}

\begin{lemma}\label{lem:rank}
Let $P = \{ \vex \in \R^n \colon A \vex = \veb, B \vex \leq \ved \}$ be a pointed polyhedron, let $\veg \in \ker(A) \setminus \{\ve0\}$ be given, and let $B'$ be the maximal row-submatrix of $B$ satisfying $B' \veg = \ve0$. Then $\veg$ is a circuit direction of $P$ if and only if $\rank \binom{A}{B'} = n-1$.  
\end{lemma}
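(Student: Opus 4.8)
The plan is to translate \Cref{def:circuits} into a statement about supports of $B\veg$ and the rank of $\ker\binom{A}{B'}$. Observe that the rows of $B$ \emph{absent} from $B'$ are exactly those indices $i$ with $(B\veg)_i\neq 0$, i.e.\ $\supp(B\veg)$ is the index complement of $B'$; so $\veg$ is a circuit direction precisely when no $\vex\in\ker(A)\setminus\{\ve0\}$ has $\supp(B\vex)\subsetneq\supp(B\veg)$. The bridge between the two sides is that $\veg\in\ker\binom{A}{B'}\setminus\{\ve0\}$, which forces $\rank\binom{A}{B'}\le n-1$ for free; the lemma asserts that equality — equivalently $\dim\ker\binom{A}{B'}=1$, i.e.\ $\ker\binom{A}{B'}=\R\veg$ — is what characterizes support-minimality. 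I would first record the easy consequence of pointedness that $B\veg\neq\ve0$ for every $\veg\in\ker(A)\setminus\{\ve0\}$ (else $\veg\in\ker\binom{A}{B}$ while $\rank\binom{A}{B}=n$), so that $\supp(B\veg)$ is nonempty.

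For the ``if'' direction I would assume $\rank\binom{A}{B'}=n-1$, so $\ker\binom{A}{B'}=\R\veg$, and take any $\vex\in\ker(A)\setminus\{\ve0\}$ with $\supp(B\vex)\subseteq\supp(B\veg)$. Every row of $B'$ is an index where $B\veg$ vanishes, hence where $B\vex$ vanishes too, so $B'\vex=\ve0$; thus $\vex\in\ker\binom{A}{B'}=\R\veg$ is a nonzero scalar multiple of $\veg$, and therefore $\supp(B\vex)=\supp(B\veg)$. This rules out a proper sub-support, so $B\veg$ is support-minimal and $\veg$ is a circuit direction.

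For the ``only if'' direction I would argue by contrapositive: if $\rank\binom{A}{B'}\le n-2$ then $\dim\ker\binom{A}{B'}\ge 2$, so there is $\vex\in\ker\binom{A}{B'}$ independent of $\veg$, and $B'\vex=\ve0$ gives $\supp(B\vex)\subseteq\supp(B\veg)=:\sigma$. If this inclusion is proper, $\vex$ itself shows $B\veg$ is not support-minimal. Otherwise $\supp(B\vex)=\sigma$; choosing any $i\in\sigma$ (nonempty, by the pointedness remark) and setting $\vex'=\vex-\frac{(B\vex)_i}{(B\veg)_i}\veg$, one gets $\vex'\in\ker(A)$, $\vex'\neq\ve0$ by independence, $(B\vex')_i=0$, and $\supp(B\vex')\subseteq\sigma$ because outside $\sigma$ both $B\vex$ and $B\veg$ vanish; hence $\supp(B\vex')\subsetneq\sigma$, and again $B\veg$ fails to be support-minimal.

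I do not expect a genuine obstacle. The only delicate point is the final cancellation step, where one must check simultaneously that subtracting the scalar multiple of $\veg$ cannot create support outside $\sigma$, that it does annihilate the chosen coordinate $i$, and that linear independence keeps $\vex'$ nonzero — all routine once the bookkeeping of which coordinates vanish is set up carefully.
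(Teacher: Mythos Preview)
Your proposal is correct, and the ``if'' direction is identical to the paper's. The ``only if'' direction differs slightly: where you take an $\vex\in\ker\binom{A}{B'}$ independent of $\veg$ and then, if $\supp(B\vex)=\supp(B\veg)$, apply the cancellation $\vex'=\vex-\tfrac{(B\vex)_i}{(B\veg)_i}\veg$ to force a strictly smaller support, the paper instead augments $B'$ by additional rows of $B$ to reach a row-submatrix $B''$ with $\rank\binom{A}{B''}=n-1$, and takes any nonzero $\vey\in\ker\binom{A}{B''}$. Since $B''$ strictly contains $B'$, such a $\vey$ automatically has $\supp(B\vey)\subsetneq\supp(B\veg)$, so the cancellation step you flagged as ``the only delicate point'' is bypassed entirely. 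Both arguments are elementary linear algebra of comparable length; the paper's version trades your explicit coordinate cancellation for a one-line rank-extension observation.
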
 
\begin{proof}
Note first that since $P$ is pointed, $\rank\binom{A}{B'} \leq n-1$ for any $\veg \in \ker(A) \setminus \{\ve0\}$. 

Suppose that $\veg$ is a circuit direction of $P$. If $\rank\binom{A}{B'} < n-1$, there exist rows of $B$ that can be added to $B'$ in order to form a new row-submatrix $B''$ of $B$ such that $\rank \binom{A}{B''} = n-1$. However, there then exists some nonzero $\vey \in \ker \binom{A}{B''}$ which must satisfy $\supp(B\vey) \subsetneq \supp(B\veg)$, contradicting the fact that $\veg$ is a circuit direction of $P$. 

Conversely, if $\rank\binom{A}{B'} = n-1$, it holds that $\ker\binom{A}{B'}$ is one-dimensional and generated by $\veg$. Thus, any $\vey \in \ker(A) \setminus \{\ve0\}$ satisfying $\supp(B\vey) \subseteq \supp(B\veg)$ must be a scalar multiple of $\veg$, implying that $\veg$ is a circuit direction of $P$. 
\eoproof  \end{proof}

Significant speed-up of \Cref{alg:naive} can be achieved by performing the necessary row-reduction operations in a strategic manner such as a variant of the Double Description method \cite{f-98} or the binary approach proposed in \cite{gk-04}. Of course, the best-case running time for \Cref{alg:naive} is exponential, but it serves as a baseline approach for the general circuit enumeration problem.


\subsection{Enumeration via Standard Form Representation}

An often neglected issue in circuit enumeration is the fact that the set of circuits is sensitive to the representation of a polyhedron. Current state-of-the-art software packages used to enumerate circuits \cite{4ti2} and elementary modes \cite{efmtool}, as well as the other discussions on circuit enumeration in the literature \cite{gk-04,h-03,st-97} are restricted to standard form representation: $P = \{ \vex \in \R^n \colon A \vex = \veb, \ \vex \geq 0\}$. In this situation, the procedure of \Cref{alg:naive} corresponds to finding the inclusion-minimal sets of linearly dependent columns of $A$. 

It is indeed possible to enumerate the set of circuits of a general polyhedron as a subset of the circuits of a related standard form polyhedron---simply test whether or not each standard form circuit satisfies the condition of \Cref{lem:rank} with respect to the original representation. This yields \Cref{alg:standard_form} for general circuit enumeration which can take advantage of existing software packages for computing circuits. The correctness of the algorithm follows from \Cref{lem:rank} and \Cref{thm:standard_form_conversion}, given at the end of the section. After enumerating an initial set of standard form circuits, the algorithm includes a post-processing step in which a rank computation is performed for each element of $\mathcal{C}\binom{A \ \ve0}{B \ I}$.

\begin{algorithm}
\caption{Circuit Enumeration via Standard Form Conversion}\label{alg:standard_form}
\begin{algorithmic}[1]
\Procedure{StandardFormCircuits}{$A,B$}\Comment{Computes $\mathcal{C}(A,B)$}
\State $S \gets \emptyset$
\State Use any standard form circuit enumeration algorithm to compute $\mathcal{C}\binom{A \ \ve0}{B \ I}$
\For{each $(\veg, B\veg) \in \mathcal{C}\binom{A \ \ve0}{B \ I}$}
\State $B' \gets$ the maximal row-submatrix $B'$ of $B$ such that $B' \veg = \ve0$
\If{$\rank \binom{A}{B'} = n-1$}
\State $S \gets S \cup \{\veg\}$
\EndIf
\EndFor
\State \textbf{return} $S$
\EndProcedure
\end{algorithmic}
\end{algorithm}

However, by introducing new facets to the polyhedron, the conversion to standard form required by \Cref{alg:standard_form} increases the number of circuits computed from $|\mathcal{C}(A,B)|$ to $\left|  \mathcal{C}\binom{A \ \ve0}{B \ I} \right|$, making the already difficult task of circuit enumeration even harder. To illustrate this phenomenon, consider a full-dimensional polyhedron $P = \{ \vex \in \R^n \colon B \vex \leq \ved\}$. Unless the constraint matrix $B$ has many subdeterminants equal to zero, converting $P$ to standard form by adding slack variables and splitting up original variables into positive and negative parts will introduce exponentially many unwanted circuits.

\begin{theorem}\label{thm:representation}
Let $P = \{ \vex \in \R^n \colon B \vex \leq \ved\}$ be a polyhedron in which all subdeterminants of $B \in \R^{m_B \cross n}$ are nonzero, and let 
\begin{align*}
P' = \{ (\vex^+, \vex^-, \ves) \in \R^{2n + m_B} \colon B(\vex^+ - \vex^-) + I \ves = \ved, \ \vex^+, \vex^-, \ves \geq \ve0 \}
\end{align*}
be the standard form representation of $P$. Then
\begin{align*}
|\mathcal{C}(P')| = 2n + 2\sum_{d=1}^n \binom{n}{d} \binom{m_B}{d-1} 2^d = 2^n |\mathcal{C}_\leq(B)|  + 2n + 2\sum_{d=1}^{n-1} \binom{n}{d} \binom{m_B}{d-1} 2^d,
\end{align*}
where $\mathcal{C}(P') := \mathcal{C}(B  \ -B  \  \ I)$ is the set of circuits of $P'$. 
\end{theorem}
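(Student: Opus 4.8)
The plan is to directly characterize the circuits of $P' = \{(\vex^+,\vex^-,\ves) : B(\vex^+ - \vex^-) + I\ves = \ved,\ \vex^+,\vex^-,\ves \geq \ve0\}$ using Definition~\ref{def:circuits}: since $P'$ is in standard form with constraint matrix $(B\ {-B}\ I) \in \R^{m_B \times (2n+m_B)}$, its circuits are exactly the support-minimal nonzero vectors $(\veg^+, \veg^-, \veh)$ in $\ker(B\ {-B}\ I)$, i.e.\ those satisfying $B(\veg^+ - \veg^-) + \veh = \ve0$, normalized to coprime integers. First I would observe that any such circuit is determined by an inclusion-minimal linearly dependent set of columns of $(B\ {-B}\ I)$. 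The columns come in three blocks: the $n$ columns of $B$, the $n$ columns of $-B$, and the $m_B$ unit vectors. The key structural fact, exploited because \emph{all subdeterminants of $B$ are nonzero}, is that any $d$ columns from the $B/{-B}$ blocks are linearly independent as long as $d \leq n$ and no column index is repeated between the two blocks (a repeated index gives the trivial dependence $B_j e_j + (-B_j)(-e_j)$-type circuit, the $2n$ "positive/negative part" circuits). So the nontrivial minimal dependences are built by choosing a set of $d$ distinct original coordinate indices ($1 \leq d \leq n$), assigning each either to the $\vex^+$ block or the $\vex^- $ block ($2^d$ sign choices), and then adjoining exactly the right number of unit-vector columns to create a minimal dependence.

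The counting heart of the argument: given $d$ chosen column indices from $B$ (with signs fixed), these $d$ columns are linearly independent, so they span a $d$-dimensional subspace of $\R^{m_B}$; a minimal dependence adjoining unit vectors needs $d+1$ columns total in "generic" position — but here we must be careful. Let me reconsider: a set of columns of $(B\ {-B}\ I)$ is a circuit support iff it is minimally dependent. If we take $d$ columns from the $B$-type blocks and $k$ unit columns $e_{i_1},\dots,e_{i_k}$, minimality forces $d + k$ to be exactly one more than the rank of the first $d + k - 1$ of them; since the $d$ $B$-columns are independent and each added generic unit vector increases rank until we hit $m_B$, for the whole set of size $d+k$ to be minimally dependent with the $B$-columns all essential we need... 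I would work out that we need $k = 1$: the $d$ independent $B$-columns together with a single unit column $e_i$ are dependent iff $e_i$ lies in their span, and because every $(d+1)\times(d+1)$ subdeterminant of $[B_S\ |\ e_i]$-type matrices is controlled by subdeterminants of $B$ (nonzero), $e_i$ lies in the span for exactly those $i$ with... here is where I'd pin down the combinatorics — actually for a generic $B$ with all subdeterminants nonzero, the $d$ columns $B_{j_1},\dots,B_{j_d}$ span a $d$-dimensional subspace meeting $\{e_i\}$ in exactly the coordinate positions forced by a Cramer-type computation, and the count $\binom{m_B}{d-1}$ in the theorem strongly suggests that the correct picture is: a circuit of $P'$ is supported on $d$ columns from $B/{-B}$ (giving $\binom{n}{d}$ index choices and $2^d$ sign choices) together with $d-1$ unit columns, and that all $(d-1)$-subsets of the $m_B$ unit columns can appear, giving the $\binom{m_B}{d-1}$ factor; the two trailing $B$-type columns and the free sign of the circuit account for the overall factor $2$.

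So the steps are: (1) classify circuit supports of $(B\ {-B}\ I)$ into the $2n$ trivial ones (yielding the "$+2n$" term) and the rest; (2) show each nontrivial support consists of $d$ distinct original indices with a choice of sign for each, plus exactly $d-1$ slack indices, and that the nonzero-subdeterminant hypothesis guarantees (a) these are genuinely minimally dependent and (b) the resulting kernel vector is unique up to scaling with the full claimed support; (3) count: $2\sum_{d=1}^{n}\binom{n}{d}\binom{m_B}{d-1}2^d$, where the leading $2$ is for $\pm\veg$; (4) finally, separate the $d=n$ term and recognize $2^n \cdot 2\binom{n}{n}\binom{m_B}{n-1} = 2^n|\mathcal{C}_\leq(B)|$, since by Lemma~\ref{lem:rank} applied to the full-dimensional $P = \{B\vex \leq \ved\}$, the circuits $\mathcal{C}_\leq(B)$ are exactly the $\pm$ directions obtained from $n-1$ independent rows of $B$, of which there are $2\binom{m_B}{n-1}$ given all maximal subdeterminants are nonzero. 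The main obstacle I anticipate is step (2b): verifying rigorously that for $d$ generic independent columns of $B$, \emph{exactly} $\binom{m_B}{d-1}$ choices of slack columns (namely all of them, of size $d-1$) produce a minimal dependence and that no larger or smaller slack set works — this requires a careful Cramer's-rule / matroid-circuit argument using that every subdeterminant of $B$, of every size, is nonzero, and ruling out "accidental" additional zeros in the cofactor expansions that would shrink a support.
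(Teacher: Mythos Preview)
Your overall strategy matches the paper's: isolate the $2n$ trivial circuits coming from a paired column $B_j$ and $-B_j$, classify the remaining circuits by the support size $d$ of $\veg := \veg^+ - \veg^-$, account for the $2^d$ ways to split $\veg$ into its positive and negative parts, and extract the $d=n$ term as $2^n|\mathcal{C}_\leq(B)|$ via $|\mathcal{C}_\leq(B)| = 2\binom{m_B}{n-1}$.

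However, your step (2) contains a concrete error. You guess that a nontrivial circuit is supported on $d$ columns from the $B/{-B}$ blocks together with exactly $d-1$ unit columns. This is false: such a circuit is supported on $d$ columns from $B/{-B}$ together with $m_B - (d-1)$ unit columns. Indeed, take $d$ sign-adjusted columns of $B$ with distinct indices and $k$ unit columns $e_{i_1},\dots,e_{i_k}$; projecting onto the complement of the coordinates $\{i_1,\dots,i_k\}$ sends the unit columns to zero and the $B$-columns to an $(m_B - k)\times d$ submatrix of $B$, which has rank $\min(m_B-k,d)$ by the all-subdeterminants-nonzero hypothesis. Hence the combined rank is $k + \min(m_B - k, d)$, and minimal dependence (rank $= d+k-1$) forces $m_B - k = d-1$, i.e., $k = m_B - d + 1$. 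Your count survives only by the coincidence $\binom{m_B}{d-1} = \binom{m_B}{m_B - d + 1}$; your earlier guess $k=1$ and your reading of $\binom{m_B}{d-1}$ as ``choose $d-1$ slack columns in the support'' are both wrong.

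The paper sidesteps this by first eliminating $\ves = -B\veg$ and analyzing support-minimality of the pair $(\veg, B\veg)$ directly. There $\binom{m_B}{d-1}$ arises as the choice of $d-1$ \emph{rows} of $B$ at which $B\veg$ vanishes: together with the $d$ chosen columns this forms a $(d-1)\times d$ submatrix of full row rank whose one-dimensional kernel determines $\veg$. Distinctness of the resulting circuits is then immediate, since two different $(d-1)$-row choices annihilating the same $\veg$ would produce a singular $d\times d$ submatrix of $B$. This framing is cleaner than the column-matroid computation and makes the role of the nonzero-subdeterminant hypothesis transparent at each step.
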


\begin{proof}
Lemma~\ref{lem:rank} implies that a circuit direction of $P$ corresponds to the one-dimensional kernel of an $(n-1) \cross n$ submatrix of $B$. Since all subdeterminants of $B$ are nonzero, each such submatrix has full row rank and thus has a one-dimensional kernel generated by a circuit of $P$. Furthermore, all such kernels are distinct, else $B$ would contain an $n \cross n$ singular submatrix. Therefore, each set of $n-1$ rows of $B$ yields a pair of opposite circuit directions, implying
\begin{align*}
|\mathcal{C}_\leq (B)| = 2 \binom{m_B}{n-1}.
\end{align*}

Now consider the standard form representation $P'$. The set of circuits of $P'$   consists of those $(\veg^+, \veg^-, \ves) \in \R^{2n + m_B}$ (normalized to coprime integer components) for which $B(\veg^+ - \veg^-)  = -\ves$ and $(\veg^+, \veg^-, \ves)$ is support-minimal over all such vectors. If $(\veg^+, \veg^-, \ves)$ is a circuit of $P'$ and there exists an index $i$ such that $\veg^+_i, \veg^-_i$ are both nonzero, the support of $(\veg^+, \veg^-, \ves)$  can be reduced while maintaining $B(\veg^+ - \veg^-)  = -\ves$ by shifting $\veg^+_i, \veg^-_i$ in the appropriate directions (decreasing one while increasing the other by the same value) so that at least one is equal to zero. Therefore, any circuit with both $\veg^+_i, \veg^-_i \neq 0$ must have $\veg^+_i = \veg^-_i = \pm 1$ with all other components equal to zero. This yields $2n$ circuits of $P'$. If we set $\veg := \veg^+ - \veg^-$, all other circuits of $P'$ then correspond to vectors $\veg \in \R^n$ for which $(\veg, B\veg)$ is support-minimal. (Note the contrast with the original circuits $\mathcal{C}_\leq(B)$---the support of $\veg$ is now taken into account along with the support of $B \veg$.)

Clearly the unit vectors $\mathbf{e}_i$ correspond to circuits of $P'$ since no $\veg \in \R^n \setminus \{\ve0\}$ has strictly smaller support and each $B \mathbf{e}_i$ has only nonzero entries. Each $\mathbf{e}_i$ can be represented in two ways as the difference $\veg^+ - \veg^-$ by selecting one of $\veg^+_i$ or $\veg^-_i$ to be nonzero. Counting both members of opposite pairs, this yields another $2n \cdot 2$ circuits of $P'$.  

Now let $B'$ be a $(d-1) \cross d$ submatrix of $B$ for $d \in {2,...,n}$. Since $B'$ has only nonzero subdeterminants, it has full row rank and its kernel is generated by some $\veg' \in \R^d$. Additionally, each component of $\veg'$ must be nonzero, else a proper subset of the columns of $B'$ would form a singular matrix. Now consider the natural augmentation $\veg \in \R^n$ of $\veg'$, where the components of $\veg$ that do not correspond to columns of $B'$ are equal to zero and the remaining components are equal to those of $\veg'$. Then $(\veg, B\veg)$ corresponds to a circuit of $P'$. To see this, note that any $\vey' \in \R^d \setminus \{\ve0\}$ with $\supp(\vey') \subsetneq \supp(\veg')$ must satisfy $B' \vey' \neq \ve0$. Hence, $\supp((\vey, B\vey)) \centernot \subseteq \supp((\veg, B\veg))$ for the natural augmentation $\vey \in \R^n$ of $\vey'$.

We now show that each such $\veg$ found in the previous paragraph is distinct. If $\veg'$ generates the kernel of a $(d-1) \cross d$ submatrix $B'$ of $B$, then $B'_i \veg' \neq 0$ for any other sub-row $B'_i$ of $B$ corresponding to the columns of $B'$. Otherwise, we could add this row to $B'$ to form a singular $d \cross d$ submatrix of $B$. Hence, if $\veg \in \R^n$ is the natural augmentation of $\veg'$, the only components of $B\veg$ that are zero are those corresponding to rows of $B'$. This implies that each $(d-1) \cross d$ submatrix of $B$ yields a distinct direction $(\veg, B\veg)$. Furthermore, as $\veg$ has exactly $d$ nonzero entries, each resulting $\veg$ can be represented in $2^d$ ways as the difference $\veg^+ - \veg^-$. Therefore, since all possible supports of $(\veg, B\veg)$ have been accounted for, we have:
\begin{align*}
|\mathcal{C}(P')| &= 2n + 2n \cdot 2 + 2\sum_{d=2}^n \binom{n}{d} \binom{m_B}{d-1} 2^d \\
 &= 2n + 2\sum_{d=1}^n \binom{n}{d} \binom{m_B}{d-1}2^d \\
&= 2^n|\mathcal{C}_\leq(B)|  + 2n + 2\sum_{d=1}^{n-1} \binom{n}{d} \binom{m_B}{d-1}2^d.
\end{align*} 
\eoproof  \end{proof}

Note that almost all real matrices $B \in \R^{m_B \cross n}$ satisfy the subdeterminant condition of Theorem~\ref{thm:representation}---while $B$ has a subdeterminant equal to 0, perturb an entry in $B$ to change the determinant of a smallest singular submatrix of $B$ while maintaining the nonzero status of other subdeterminants. However, most integer matrices in combinatorial optimization have many subdeterminants equal to zero, so the number of circuits introduced via standard form conversion will not necessarily be the amount stated in Theorem~\ref{thm:representation}. Nevertheless, the potential to compute a set whose size is more than $2^n$ times the number of actual circuits implies that this conversion should be avoided.

An alternative is to compute the much smaller set of circuits $\mathcal{C}( B \  \ I) $ instead of $\mathcal{C}(B  \ -B  \  \ I)$. This is the approach taken in \Cref{alg:standard_form}. Namely, in the case of a full-dimensional polyhedron $P = \{ \vex \in \R^n \colon B \vex \leq \ved\}$, we avoid splitting variables into their positive and negative parts by considering the standard form polyhedron 
\begin{align*}
P'' = \{ (\vex, \ves) \in \R^{n + m_B} \colon B \vex + I \ves =  \ved, \  \vex, \ves \geq \ve0\}.
\end{align*}
Although $P''$ may no longer be equivalent to the original polyhedron $P$, the circuits of $P$ still appear as a subset of the circuits of $P''$, which now correspond to support-minimal vectors of the form $(\veg, B\veg)$. The exponential term $2^d$ in Theorem~\ref{thm:representation} associated with the splitting of $\veg$ into $\veg^+ - \veg^-$ is avoided, and we obtain the following bound on the number of introduced circuits.
 
\begin{corollary}
Let $P = \{ \vex \in \R^n \colon B \vex \leq \ved\}$ be a polyhedron in which all subdeterminants of $B \in \R^{m_B \cross n}$ are nonzero. Then
\begin{align*}
|\mathcal{C}(B \ \ I)| = 2\sum_{d=1}^n \binom{n}{d} \binom{m_B}{d-1}  =  |\mathcal{C}_\leq(B)| + 2\sum_{d=1}^{n-1} \binom{n}{d} \binom{m_B}{d-1}.
\end{align*}
\end{corollary}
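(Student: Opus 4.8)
The plan is to mirror the proof of Theorem~\ref{thm:representation}, but for the standard form polyhedron $P'' = \{(\vex,\ves) \in \R^{n+m_B} \colon B\vex + I\ves = \ved, \ \vex,\ves \geq \ve0\}$, in which variables are not split into positive and negative parts. First I would note that every element of $\ker(B \ \ I)$ has the form $(\veg,-B\veg)$, so it is nonzero precisely when $\veg \neq \ve0$; hence $\mathcal{C}(B\ \ I)$ is in bijection, modulo the usual sign symmetry, with the set of $\veg \in \R^n \setminus \{\ve0\}$ (normalized to coprime integer components) for which $(\veg, B\veg)$ is support-minimal over $\{(\vex, B\vex) \colon \vex \in \R^n \setminus \{\ve0\}\}$. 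I would then stratify by $d := |\supp(\veg)|$ and show that, under the nonzero-subdeterminant hypothesis, each $(d-1)\cross d$ submatrix of $B$ yields a pair of opposite circuit directions with $|\supp(\veg)|=d$, and every such circuit arises from a unique such submatrix. Summing $2\binom{n}{d}\binom{m_B}{d-1}$ over $d = 1,\dots,n$ (all binomials are positive since pointedness forces $\rank B = n$, hence $m_B \geq n$) gives the first equality, and isolating the $d=n$ term, which equals $2\binom{m_B}{n-1} = |\mathcal{C}_\leq(B)|$ as computed in Theorem~\ref{thm:representation}, gives the second.

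For the forward direction I would argue exactly as in Theorem~\ref{thm:representation}: a $(d-1)\cross d$ submatrix $B'$ of $B$ has full row rank, its kernel is spanned by a single $\veg' \in \R^d$, and every coordinate of $\veg'$ is nonzero (a vanishing one would produce a singular $(d-1)\cross(d-1)$ submatrix of $B$). For the natural augmentation $\veg \in \R^n$ of $\veg'$, any $\vey' \in \R^d \setminus \{\ve0\}$ with $\supp(\vey') \subsetneq \supp(\veg')$ satisfies $B'\vey' \neq \ve0$ (otherwise fewer than $d$ columns of $B'$ would be dependent, yielding a singular square submatrix of $B$), so the natural augmentation $\vey$ has $\supp((\vey,B\vey)) \centernot\subseteq \supp((\veg,B\veg))$; thus $(\veg,B\veg)$ is support-minimal. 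Distinctness holds because $\supp(\veg)$ recovers the chosen columns while the vanishing coordinates of $B\veg$ recover the chosen rows — nonzero subdeterminants again ensuring no further row of $B$, restricted to those columns, annihilates $\veg'$.

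The reverse direction is the step I expect to demand the most care. Given a support-minimal $(\veg,B\veg)$ with $d = |\supp(\veg)|$, let $B'$ be the row-submatrix of $B$ consisting of exactly the rows on which $B\veg$ vanishes. I would show that $B'$, restricted to the columns $\supp(\veg)$, has exactly $d-1$ rows: it cannot have rank less than $d-1$, since then, as in the proof of Lemma~\ref{lem:rank}, one could enlarge the vanishing-row set and strictly shrink $\supp((\veg,B\veg))$, contradicting support-minimality; and it cannot have $d$ or more rows, since a $(\ge d)\cross d$ matrix of rank $\le d-1$ would contain a singular $d\cross d$ submatrix of $B$. Hence $B'$ has exactly $d-1$ rows and full rank $d-1$ on $\supp(\veg)$, so $\veg$ is the natural augmentation of the kernel generator of a $(d-1)\cross d$ submatrix of $B$. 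Combining the two directions establishes the bijection and the corollary; the only new content relative to Theorem~\ref{thm:representation} is bookkeeping, as the absence of variable splitting removes all the $2^d$ multiplicities, and the terms $2n$ and $2n\cdot 2$ there (from the $\veg^+_i = \veg^-_i = \pm 1$ circuits and the split unit vectors) collapse to the single $d=1$ contribution $2\binom{n}{1}\binom{m_B}{0} = 2n$.
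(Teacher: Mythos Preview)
Your proposal is correct and takes essentially the same approach as the paper: the corollary is stated there without a separate proof, the surrounding text simply observing that one reruns the argument of Theorem~\ref{thm:representation} for $P''$ without the variable split, so the $2^d$ multiplicities and the extra $2n$ trivial circuits disappear. Your write-up is in fact more careful than the paper on the reverse direction (showing that a support-minimal $(\veg,B\veg)$ forces exactly $d-1$ vanishing rows), whereas the paper handles this by the one-line remark that ``all possible supports of $(\veg,B\veg)$ have been accounted for.''
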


However, we still obtain exponentially many circuits that do not correspond to circuits of $\mathcal{C}_\leq(B)$. In the case where $m_B$ is close to $n$ (for example $m_B = n+1$), the number of introduced circuits is again an exponential multiple of the original number of circuits. For a general polyhedron, we obtain the following bound on the number of circuits introduced using this approach of \Cref{alg:standard_form}.

\begin{theorem}\label{thm:standard_form_conversion}
Let $P = \{ \vex \in \R^n \colon A \vex = \veb, B \vex \leq \ved \}$ be a pointed polyhedron and let $r := \rank(A)$. Then $\veg \in \mathcal{C}(A,B)$ only if $(\veg, B\veg) \in \mathcal{C}\binom{A \ \ve0}{B \ I}$. Furthermore,
\begin{align*}
|\mathcal{C}(A,B)| \leq \left| \mathcal{C}\binom{A \ \ve0}{B \ I} \right| \leq |\mathcal{C}(A,B)| + 2\sum_{d=r+1}^{n-1} \binom{n}{d} \binom{m_B}{d-r-1}, 
\end{align*}
where either bound may be sharp.
\end{theorem}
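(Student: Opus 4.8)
The plan is to establish the three parts of the statement in turn --- the ``only if'' containment, the two-sided bound, and sharpness --- working throughout with circuit \emph{directions}, so that the coprime-integer normalization is irrelevant and \Cref{lem:rank} applies verbatim. The one structural fact I would use everywhere is that a vector of $\ker\binom{A \ \ve0}{B \ I}\setminus\{\ve0\}$ has the form $(\veg,-B\veg)$ with $\veg\in\ker(A)\setminus\{\ve0\}$ (recorded in the theorem as the pair $(\veg,B\veg)$, whose second block has the same support), so its support is the disjoint union of $\supp(\veg)$, inside the first $n$ coordinates, and $\supp(B\veg)$, inside the last $m_B$. A support inclusion between two such vectors is therefore a pair of inclusions, one for the $\veg$-parts and one for the $B\veg$-parts.

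For the containment I would argue by contradiction. If $\veg\in\mathcal{C}(A,B)$ but $(\veg,B\veg)$ is not a circuit direction of the standard-form polyhedron, there is $\vey\in\ker(A)\setminus\{\ve0\}$ with $\supp(\vey)\subseteq\supp(\veg)$ and $\supp(B\vey)\subseteq\supp(B\veg)$, at least one strict. A strict second inclusion contradicts \Cref{def:circuits}. Otherwise $\vey$ is annihilated by the same maximal row-submatrix $B'$ of $B$ as $\veg$, so since $\veg$ is a circuit direction, \Cref{lem:rank} forces $\ker\binom{A}{B'}=\mathrm{span}(\veg)$, hence $\vey\in\mathrm{span}(\veg)$ and $\supp(\vey)=\supp(\veg)$ --- a contradiction. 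Since $\veg\mapsto(\veg,B\veg)$ is injective, this containment already gives the left-hand inequality, and moreover $|\mathcal{C}\binom{A \ \ve0}{B \ I}| = |\mathcal{C}(A,B)| + (\text{number of \emph{extra} circuit directions})$, where I call $(\veg,B\veg)\in\mathcal{C}\binom{A \ \ve0}{B \ I}$ extra when $\veg\notin\mathcal{C}(A,B)$.

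The real work is bounding the number of extra circuit directions. Fix one, write $S:=\supp(\veg)$, and let $B_Z$ be the maximal row-submatrix of $B$ with $B_Z\veg=\ve0$. First I would record, via \Cref{lem:rank} applied to the standard-form polyhedron with constraint matrix $\binom{A \ \ve0}{B \ I}$ and elimination of the slack columns, that $(\veg,B\veg)$ is a circuit direction there exactly when $\rank\binom{A_{\cdot,S}}{(B_Z)_{\cdot,S}}=|S|-1$, whereas $\veg\notin\mathcal{C}(A,B)$ means exactly $\rank\binom{A}{B_Z}\le n-2$. Indexing extra circuits by their supports $S$ is not enough, because $|S|$ can be smaller than $r+1$ when $A$ is rank-deficient on the columns of $S$. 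The fix is to enlarge $S$: greedily adjoin $r-\rank(A_{\cdot,S})$ columns, each chosen to raise the rank of the $A$-submatrix by one, obtaining $S'\supseteq S$ with $\rank(A_{\cdot,S'})=r$ and $d':=|S'|=|S|+r-\rank(A_{\cdot,S})$. Each adjoined column has its $A$-part outside the current $A$-column span, hence its whole column lies outside the span of the corresponding columns of $\binom{A_{\cdot,C}}{(B_Z)_{\cdot,C}}$ (for the current column set $C$), so the rank of this matrix also rises by one at each step; thus $\rank\binom{A_{\cdot,S'}}{(B_Z)_{\cdot,S'}}=d'-1$, with kernel spanned by the zero-extension $\veg_{S'}$ of $\veg_S$ to $S'$. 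Then $\rank(A_{\cdot,S})\le|S|-1$ gives $d'\ge r+1$, and $d'=n$ would force $\rank\binom{A}{B_Z}=\rank\binom{A_{\cdot,S'}}{(B_Z)_{\cdot,S'}}=n-1$, contradicting $\rank\binom{A}{B_Z}\le n-2$; so $r+1\le d'\le n-1$. Finally, since $\rank(A_{\cdot,S'})=r$, I can pick $J\subseteq Z$ with $|J|=d'-r-1$ and $\rank\binom{A_{\cdot,S'}}{(B_J)_{\cdot,S'}}=d'-1$; that matrix then has one-dimensional kernel $\mathrm{span}(\veg_{S'})$, so the pair $(S',J)$ recovers $\veg$ up to scalar. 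Hence the number of extra circuit directions is at most $2\sum_{d'=r+1}^{n-1}\binom{n}{d'}\binom{m_B}{d'-r-1}$ (the factor $2$ because they occur in pairs $\{\veg,-\veg\}$), which is the claimed upper bound. The hard part is exactly this enlargement step: getting the index $d'$ back into the window $[r+1,\,n-1]$ and verifying that the augmented matrix still has a one-dimensional kernel containing $\veg_{S'}$.

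For sharpness I would give one example for each side. The left inequality is tight when $P$ is already in standard form (take $B=-I_n$, $\ved=\ve0$): then $\ker\binom{A \ \ve0}{-I \ I}=\{(\veg,\veg):\veg\in\ker(A)\}$, no extra circuits arise, and equality holds. The right inequality is tight for a full-dimensional $P=\{\vex:B\vex\le\ved\}$ with all subdeterminants of $B$ nonzero (so $r=0$): the exact count from the corollary following \Cref{thm:representation} equals $|\mathcal{C}_\le(B)|+2\sum_{d'=1}^{n-1}\binom{n}{d'}\binom{m_B}{d'-1}$, which is verbatim the upper bound in this case.
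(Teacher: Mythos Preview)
Your proof is correct, and for the upper bound it is actually more complete than the paper's. The paper establishes the upper bound by first computing the exact size of $\mathcal{C}\binom{A\ \ve0}{B\ I}$ in the generic case where all subdeterminants of $\binom{A}{B}$ are nonzero (so in particular $\rank(A_{\cdot,S})=r$ automatically for any $|S|\ge r$), identifies the $d=n$ choices with $\mathcal{C}(A,B)$, and then asserts in one sentence that the same bound on the extra circuits holds ``for general $\binom{A}{B}$.'' You instead argue directly for arbitrary $A,B$: the enlargement step $S\to S'$, raising $\rank(A_{\cdot,S})$ to $r$ while tracking that $\rank\binom{A_{\cdot,S'}}{(B_Z)_{\cdot,S'}}$ stays at $|S'|-1$, is precisely what is needed to force the index $d'$ into the window $[r+1,n-1]$ when $A$ is rank-deficient on $S$; this is a genuine addition over the paper's argument. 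Your containment argument (contradiction plus \Cref{lem:rank}) is slightly more elaborate than the paper's one-line appeal to the definitions, but equally valid, and your sharpness examples ($B=-I$ for the lower bound; the full-dimensional nonzero-subdeterminant case, via the corollary after \Cref{thm:representation}, for the upper bound) match the paper's.
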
 

\begin{proof}
The elements of $\mathcal{C}(A,B)$ are those of $\veg \in \ker(A) \setminus \{\ve0\}$ such that $B\veg$ is support-minimal over $\{ B\veg \colon \veg \in \ker(A) \setminus \{\ve0\} \}$, while $\mathcal{C}(A,B)$ consists of $\veg \in \ker(A) \setminus \{\ve0\}$ such that $(\veg, B \veg)$ is support-minimal over $\{ (\veg, B \veg) \colon \veg \in \ker(A) \setminus \{ \ve0\} \}$. Clearly if $\veg \in \mathcal{C}(A,B)$ then $(\veg, B\veg) \in \mathcal{C}\binom{A \ \ve0}{B \ I}$, which implies the stated lower bound of the theorem. In fact, if $B = I$ or if $B$ contains $I$ as a row-submatrix, then $\veg \in \mathcal{C}(A,B)$ if and only if $(\veg, B\veg) \in \mathcal{C}\binom{A \ \ve0}{B \ I}$, which implies that the lower bound may be sharp.

On the other hand, the maximum size of $\mathcal{C}\binom{A \ \ve0}{B \ I}$ is 
\begin{align*}
2\sum_{d=r+1}^{n} \binom{n}{d} \binom{m_B}{d-r-1},
\end{align*}
which is achieved when all subdeterminants of $\binom{A}{B}$ are nonzero. To see this, choose $d \in \{r+1,...,n \}$, choose a set of $d$ columns of $A$ to form a column-submatrix $A'$, and choose $d-r-1$ rows of $B$. Form the matrix $\binom{A'}{B'}$, where $B'$ consists of the chosen $d-1$ rows of $B$ restricted to the columns $A'$. Then $\rank\binom{A'}{B'} = d-1$ and $\ker\binom{A'}{B'}$ is generated by some $\veg' \in \R^d$. As in the proof of \Cref{thm:representation}, extend $\veg'$ to the corresponding $\veg \in \R^n$ so that $(\veg, B\veg)$ is a circuit direction of $\mathcal{C}\binom{A \ \ve0}{B \ I}$. This follows from the fact that all subdeterminants are nonzero, which also implies that each such $\veg$ is unique. This accounts for all possible supports of $(\veg, B\veg)$, yielding the stated value for $\left | \mathcal{C}\binom{A \ \ve0}{B \ I} \right|$.

The circuits $(\veg, B\veg)$ formed by choosing $d = n$ in the procedure of the previous paragraph satisfy $\veg \in \mathcal{C}(A,B)$ by \Cref{lem:rank}. Any other circuit of $\mathcal{C}\binom{A \ \ve0}{B \ I}$ must result from a choice of $d \in \{r+1,...,n-1 \}$, and we obtain the stated upper bound for general $\binom{A}{B}$.
\eoproof  \end{proof}

Hence, as described in \Cref{alg:standard_form}, any software that enumerates $\mathcal{C}\binom{A \ \ve0}{B \ I}$, the set of circuits of a standard form polyhedron, may be used to enumerate $\mathcal{C}(A,B)$. However, this can require the enumeration and post-processing of exponentially many unwanted directions.

To complete the discussion on the relationship between the representation of a polyhedron and its set of circuits, consider the conversion of a general polyhedron $P = \{ \vex \in \R^n \colon A \vex = \veb, B \vex \leq \ved \}$ to its canonical form: $P' = \{ \vex \in \R^n \colon A \vex \leq \veb, -A\vex \leq -\veb, B \vex \leq \ved \}$. The set of circuits of $P'$ is 
\begin{align*}
\mathcal{C}_\leq\left( \begin{array}{c}
A \\ -A \\ B
\end{array} \right) = \mathcal{C}_\leq \left( \begin{array}{c}
A  \\ B
\end{array} \right),
\end{align*}
which contains directions that leave $\ker(A)$. In the case where $\binom{A}{B}$ has only nonzero subdeterminants, it holds by \Cref{lem:rank} that any subset of $n-1$ rows of $\binom{A}{B}$ yields a circuit of $\mathcal{C}_\leq \binom{A}{B}$ instead of only those subsets containing all of $A$. If we let $d$ denote the number of rows from $A$ to be included in such a subset, it then follows that 
\begin{align*}
    \left| \mathcal{C}_\leq \binom{A}{B} \right| = 2 \binom{m_A + m_B}{n-1}  &= 2 \sum_{d=0}^{m_A} \binom{m_A}{d}\binom{m_B}{n-1-d} \\ 
    &= \left| \mathcal{C}(A, B) \right| + 2 \sum_{d=0}^{m_A - 1} \binom{m_A}{d}\binom{m_B}{n-1-d}.
\end{align*}
Hence, converting to canonical form may again introduce exponentially many unwanted circuits.

\section{A Polyhedral Model for the Set of Circuits}\label{sec:model}

In this section, we propose a polyhedral model for the set of circuits of a general polyhedron. It may be used to directly enumerate or optimize over either the entire set of circuits or certain useful subsets (see \Cref{sec:advantages}). As a generalization of the results in \cite{gk-04}, where elementary modes of a metabolic network are computed as extreme rays of a polyhedral cone, the support-minimality property of \Cref{def:circuits} yields a method to model the circuits of any  polyhedron---regardless of representation---as extreme rays or vertices of a related polyhedron. First, for a general polyhedron $P = \{ \vex \in \R^n \colon A \vex = \veb, B \vex \leq \ved \}$, we show that the circuits of $P$ appear as extreme rays of the related cone $C_{A,B}$.

\begin{theorem}\label{thm:circuits_as_rays}
Let $P = \{ \vex \in \R^n \colon A \vex = \veb, B \vex \leq \ved \}$ be a pointed polyhedron. The pointed cone
\begin{align*}
C_{A,B} = \{(\vex, \vey^+, \vey^-) \in \R^{n + 2m_B} \colon A\vex = \ve0, \ B\vex = \vey^+ - \vey^-, \ \vey^+, \vey^- \geq \ve0\}.
\end{align*}
is generated by the set of extreme rays $S \cup T'$, where:
\begin{enumerate}
\item The set $S := \{ (\veg, \vey^+, \vey^-) \colon \veg \in \mathcal{C}(A, B), \ \vey_i^+ = \max\{(B\veg)_i,0\}, \  \vey_i^- = \max\{-(B\veg)_i,0\}\}$ gives the circuits of $P$.
\item The set $T'$ is a subset of  $T := \{ (\ve0, \vey^+, \vey^-) \colon \vey^+_i = \vey^-_i = 1 \text{ for some $i \leq m_B$}, \  \vey^+_j = \vey^-_j = 0 \text{ for $j \neq i$}\}$ and has size at most $m_B$.
\end{enumerate}
\end{theorem}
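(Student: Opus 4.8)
The plan is to characterize the extreme rays of $C_{A,B}$ by combining the support-minimality definition of circuits with the standard fact that in a pointed cone, a generator $\ver$ is an extreme ray if and only if it cannot be written as a sum of two generators with strictly smaller support (equivalently, the set of inequality constraints tight at $\ver$ pins down a one-dimensional face). First I would observe that $C_{A,B}$ is indeed pointed: if $(\vex,\vey^+,\vey^-) \in C_{A,B}$ and $-(\vex,\vey^+,\vey^-) \in C_{A,B}$, then $\vey^+,\vey^- \geq \ve0$ forces $\vey^+ = \vey^- = \ve0$, whence $B\vex = \ve0$ and $A\vex = \ve0$; since $\binom{A}{B}$ has rank $n$, $\vex = \ve0$. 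So the cone is generated by its extreme rays, and every ray of $C_{A,B}$ has a representative with $\vey^+_i \vey^-_i = 0$ for all $i$ \emph{unless} the ray is "purely" in the $\vey$-space — this dichotomy is what produces the two families $S$ and $T'$.

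The key steps, in order: (1) \textbf{Normal form.} Given any $(\vex,\vey^+,\vey^-) \in C_{A,B}$, show that if some coordinate has both $\vey^+_i > 0$ and $\vey^-_i > 0$, one can subtract $\min\{\vey^+_i,\vey^-_i\}$ from both without leaving the cone (this preserves $\vey^+ - \vey^- = B\vex$ and nonnegativity); iterating, decompose $(\vex,\vey^+,\vey^-)$ as a nonnegative combination of a vector with $\vex$ fixed and $\vey^\pm_i = \max\{\pm(B\vex)_i,0\}$, plus vectors of the form $(\ve0,\ve e_i,\ve e_i)$. Hence $S \cup T$ generates $C_{A,B}$, so every extreme ray lies in $S \cup T$. (2) \textbf{Elements of $T$ that survive.} An element $(\ve0,\ve e_i,\ve e_i) \in T$ is an extreme ray iff it is not a nonnegative combination of other generators; since it has minimal support among all nonzero elements with $\vex = \ve0$ touching coordinate $i$, it is extreme exactly when it is not itself expressible via $S$-vectors — but any $S$-vector with $(B\veg)_i \neq 0$ has $\vex \neq \ve0$, so $(\ve0,\ve e_i,\ve e_i)$ can only fail to be extreme if it is redundant, i.e. if no circuit $\veg$ has $(B\veg)_i = 0$ forced... more carefully: $(\ve0,\ve e_i,\ve e_i)$ is non-extreme iff it lies in the cone generated by the \emph{other} extreme rays, which (having support disjoint-ish structure) happens only in degenerate cases; in any event at most one such ray per index $i$, giving $|T'| \le m_B$. (3) \textbf{Elements of $S$ are extreme and correspond exactly to $\mathcal{C}(A,B)$.} For $\veg \in \mathcal{C}(A,B)$ with the associated $\vey^\pm$, suppose $(\veg,\vey^+,\vey^-) = \ve u + \ve v$ with $\ve u, \ve v \in C_{A,B}$ nonzero. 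Write $\ve u = (\vex_1,\cdot,\cdot)$, $\ve v = (\vex_2,\cdot,\cdot)$ with $\vex_1 + \vex_2 = \veg$. Using that $\vey^+_i\vey^-_i = 0$, deduce the $\vey$-parts of $\ve u,\ve v$ are sign-compatible with $B\veg$, so $\supp(B\vex_1) \subseteq \supp(B\veg)$; support-minimality of $B\veg$ forces $\vex_1$ a scalar multiple of $\veg$ (or zero), so $\ve u$ is a multiple of $(\veg,\vey^+,\vey^-)$. Conversely, if $(\veg,\vey^+,\vey^-) \in S$ is an extreme ray then reversing this argument (any $\vex$ with $\supp(B\vex) \subsetneq \supp(B\veg)$ would split the ray) shows $B\veg$ is support-minimal, i.e. $\veg \in \mathcal{C}(A,B)$. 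Finally cite \Cref{lem:rank} or the definition to match the normalization.

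The main obstacle I anticipate is step (2) — pinning down precisely which members of $T$ are genuine extreme rays and proving the clean bound $|T'| \le m_B$. The subtlety is that $(\ve0,\ve e_i,\ve e_i)$ might be recoverable as a combination of $S$-rays together with other $T$-rays when the matrix $B$ has special structure (e.g. a zero column or a row in the span giving a circuit $\veg$ with $(B\veg)_i=0$ but other coordinates covering the needed support), so one must argue that each index $i$ contributes at most one ray and characterize exactly when it contributes none. I would handle this by a careful support/dimension count: the face of $C_{A,B}$ obtained by setting $\vex = \ve0$ and all $\vey$ coordinates except the $i$-th pair to zero is one-dimensional, so at most one ray per $i$; whether it is an extreme ray of the whole cone (rather than lying in the relative interior of a larger face) is a finite check, and in all cases the count is bounded by $m_B$. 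The arguments in steps (1) and (3) are essentially the support-minimality manipulations already used in the proof of \Cref{thm:representation}, so those should go through routinely.
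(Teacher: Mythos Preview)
Your outline follows essentially the same route as the paper: reduce an arbitrary element of $C_{A,B}$ to a ``normal form'' in which $\vey^+_i\vey^-_i=0$ for all $i$, then use support-minimality of $B\veg$ to identify the normal-form extreme rays with circuits. The paper phrases the extreme-ray criterion via zero sets (citing Fukuda--Prodon), whereas you use the equivalent ``cannot be written as a sum of two nonzero cone elements'' formulation; the substance of step~(3) matches the paper's argument closely.

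There is one genuine slip in step~(1). Your decomposition writes a general $(\vex,\vey^+,\vey^-)$ as the normal-form vector of $\vex$ plus a nonnegative combination of $T$-vectors. From this you conclude ``$S\cup T$ generates $C_{A,B}$,'' but that does not follow: the normal-form vector of an arbitrary $\vex\in\ker(A)$ need not lie in $S$, and showing it lies in $\cone(S)$ is precisely the conformal-sum property (\Cref{prop:conformal_sum}), which the paper \emph{derives from} this theorem rather than assumes. The correct (and sufficient) conclusion from your decomposition is that every extreme ray is either a normal-form vector with $\vex\neq\ve0$ or lies in $T$; then your step~(3) converse shows the former are exactly the elements of $S$. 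So the argument is salvageable with a one-line rewording, but as written the claim ``$S\cup T$ generates'' is unjustified and risks circularity.

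You are overthinking step~(2). The bound $|T'|\le m_B$ is immediate from $T'\subseteq T$ and $|T|=m_B$; nothing more is required for the theorem as stated. The paper does not characterize exactly which $(\ve0,\ve e_i,\ve e_i)$ are extreme either --- it merely exhibits an example (a circuit $\veg$ with $|\supp(B\veg)|=1$) showing that not all of them need be, and leaves $T'$ as ``some subset of $T$.'' Your worry about recovering $(\ve0,\ve e_i,\ve e_i)$ from combinations of $S$-rays and other $T$-rays is unnecessary for the statement being proved.
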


\begin{proof}
Since $P$ is pointed, it holds that $\rank\binom{A}{B} = n$ and  that $C_{A,B}$ is a pointed cone. In particular, there does not exist $(\vex, \vey^+, \vey^-) \in C_{A,B}$ such that $\vex \neq \ve0$ and $\vey^+ - \vey^- = \ve0$. We characterize the extreme rays of $C_{A,B}$. To do this, consider a canonical representation: $C_{A,B} = \{ \ver \in \R^{n + 2m_B} \colon M \ver \geq \ve0 \}$, where
\[
M =
		\left(\begin{array}{rrr}
			A & \ve0 & \ve0 \\ -A & \ve0 & \ve0 \\ B & -I & I \\ -B & I & -I \\ \ve0 & I & \ve0 \\ \ve0 & \ve0 & I
		\end{array} \right)
	\quad \text{and} \quad
	\ver=
		\left(\begin{array}{r}
			\vex^{ \ } \\ \vey^+ \\ \vey^-
		\end{array} \right).
\]
For any $\ver \in C_{A,B}$, let $Z(\ver)$ denote the \textit{zero set} for $\ver$: the set of indices for which the corresponding inequalities in the system $M\ver \geq \ve0$ are satisfied with equality. A useful characterization of the extreme rays of a pointed cone, given by \cite{f-98} and used in \cite{gk-04}, implies that $\ver$ is an extreme ray of $C_{A,B}$ if and only if any nonzero $\ver' \in C_{A,B}$ with $Z(\ver') \supseteq Z(\ver)$ satisfies $\ver' = \alpha \ver$ for some $\alpha > 0$, and thus $Z(\ver') = Z(\ver)$. For any $\ver \in C_{A,B}$, the only inequalities of $M\ver \geq \ve0$ that may not be active are those corresponding to the constraints $\vey^+, \vey^- \geq \ve0$. Therefore, a nonzero $\ver := (\vex, \vey^+, \vey^-) \in C_{A,B}$ is an extreme ray of $C_{A,B}$ if and only if any nonzero $\ver' := (\vex', \vey'^+, \vey'^-) \in C_{A,B}$ with $\supp((\vey'^+, \vey'^-)) \subseteq \supp((\vey^+, \vey^-))$ satisfies $\ver' = \alpha \ver$ for some $\alpha > 0$. We show that the vectors of $S$ satisfy this property and that all such rays belong to $S \cup T$. 

\vspace{.1in}

First, let $\ver := (\veg, \vey^+, \vey^-) \in S$ be given. Since $\veg \in \ker(A)$ and $B\veg = \vey^+ - \vey^-$, we have $\ver \in C_{A,B}$. Let some nonzero $\ver' := (\vex', \vey'^+, \vey'^-) \in C_{A,B}$ such that $\supp((\vey'^+, \vey'^-)) \subseteq \supp((\vey^+, \vey^-))$ be given. Then $\supp(B\vex') \subseteq \supp(B\veg)$. Note that $\vex'$ must be nonzero since $\ver' \neq \ve0$. Therefore, if $\supp(B\vex') \subsetneq \supp (B\veg)$, the fact that $\veg$ is a circuit is contradicted. Hence, we have $\supp(B\vex') = \supp(B\veg)$.

It then must hold that $\vex' = \alpha \veg$ for some $\alpha \in \R$ \cite{g-75}. To see this, consider an index $i$ such that $(B\veg)_i \neq 0$ and hence $(B \vex')_i \neq 0$, and let $\vez := \veg - \frac{(B\veg)_i}{(B\vex')_i} \vex'$. It follows that $(B \vez)_i = 0$, and by the support-minimality of $B \veg$, this implies $\vez = \ve0$. Thus, $\vex' = \alpha \veg$ with $\alpha := \frac{(B\vex')_i}{(B\veg)_i}$. 

Since $\supp((\vey'^+, \vey'^-)) \subseteq \supp((\vey^+, \vey^-))$, the vectors $B\vex'$ and $B\veg$ belong to the same orthant of $\R^{m_B}$. Hence, $\alpha > 0$ and $\ver' = \alpha \ver$. Therefore, any $\ver \in S$ is an extreme ray of $C_{A,B}$.

\vspace{.1in}

On the other hand, it need not hold that all vectors of $T$ are extreme rays of $C_{A,B}$. To see this, let $\ver := (\ve0, \vey^+, \vey^-) \in T$ be given and let $i$ be the index such that $\vey^+_i = \vey^-_i = 1$. Then $\supp((\vey^+, \vey^-))$ consists of two elements. It is possible for there to exist a circuit $\veg \in \ker(A)$ such that $\supp(B \veg)$ consists of only the single index $i$. In this case, there exists a vector $\ver' := (\veg, \vey'^+, \vey'^-) \in S$ where the support of $(\vey'^+, \vey'^-)$ consists  of only one of the two elements of $\supp((\vey^+, \vey^-))$. Hence, $\ver'$ is an extreme ray of $C_{A,B}$ while $\ver$ is not.

\vspace{.1in}

For the reverse direction, we show that any extreme ray of $C_{A,B}$ has a positive scalar multiple in $S \cup T$. Let $\ver := (\vex, \vey^+, \vey^-)$ be an extreme ray of $C_{A,B}$: a nonzero vector in $C_{A,B}$ such that any $\ver':= (\vex', \vey'^+, \vey'^-)  \in C_{A,B} \setminus \{\ve0\}$ with $\supp((\vey'^+, \vey'^-)) \subseteq \supp((\vey^+, \vey^-))$ satisfies $\ver' = \alpha \ver$ for some $\alpha > 0$, implying $\supp((\vey'^+, \vey'^-)) = \supp((\vey^+, \vey^-))$. Note again that both $\vey^+$ and $\vey^-$ cannot be zero since $C_{A,B}$ is pointed.

Suppose there exists an index $i \leq m_B$ such that both $\vey^+_i$ and $\vey^-_i$ are positive. Assume first that $\vey^+_i > \vey^-_i$. Consider the translated vector $\ver' := (\vex, \vey'^+, \vey'^-)$, where $\vey'^+$ is obtained from $\vey^+$ by shifting its $i$'th component: $\vey'^+_i := \vey^+_i - \vey^-_i$; and similarly, $\vey'^-$ obtained from $\vey^-$ by setting its $i$'th component equal to zero. Thus, we maintain $B\vex =  \vey'^+ - \vey'^-$ and $\ver' \neq \ve0$, but the support of $(\vey'^+, \vey'^-)$ is strictly contained in that of $(\vey^+, \vey^-)$, a contradiction. The case where $\vey^+_i < \vey^-_i$ can be treated similarly. 

Hence, assume $\vey^+_i = \vey^-_i$ for any index $i$ such that both $\vey^+_i$ and $\vey^-_i$ are nonzero. If there exists any other index $j$ such that $\vey^+_j$ or $\vey^-_j$ are nonzero, then we may again reduce the support of $(\vey^+, \vey^-)$ while retaining the feasibility and nonzero status of $\ver$ by setting both $\vey^+_i$ and $\vey^-_i$ equal to zero. Thus, if there exists any index $i$ with both $\vey^+_i$ and $\vey^-_i$ nonzero, we have $\vey^+ - \vey^- = \ve0$, implying $\vex = \ve0$. Therefore, $\ver$ must be positive multiple of a vector in $T$. 

Thus we may assume that for each index $i \leq m_B$, at most one of $\vey^+_i, \vey^-_i$ is nonzero. Then $\vex$ is nonzero with  $\vey_i^+ = \max\{(B\vex)_i,0\}$ and $\vey_i^- = \max\{ -(B\vex)_i,0\}$ for $i \leq m_B$. Suppose that $\vex$ is not a circuit direction of $P$. Then there exists a circuit $\veg \in \ker(A) \setminus \{\ve0\}$ such that $\supp(B\veg) \subsetneq \supp(B \vex)$. However, this implies that there exists some nonzero $\ver' := (\veg, \vey'^+, \vey'^-) \in S$ such that $(\vey'^+, \vey'^-) \subsetneq (\vey^+, \vey^-)$, contradicting the choice of $\ver$. Therefore, $\vex$ is a circuit direction of $P$ and hence $\ver$ has a positive scalar multiple in $S$.
\eoproof  \end{proof}

Since $(\vey^+, \vey^-)$ consists of nonnegative entries for any $(\vex, \vey^+, \vey^-) \in C_{A,B}$, a normalizing constraint $||\vey^+||_1 + ||\vey^-||_1 = 1$ can be introduced to $C_{A,B}$ by intersecting the cone with a single hyperplane. The extreme rays of $C_{A,B}$ then become vertices of the resulting polytope $P_{A,B}$---a polyhedral model in which the circuits of the original polyhedron $P$ appear as vertices.

\begin{theorem}\label{thm:circuits_as_vertices}
Let $P = \{ \vex \in \R^n \colon A \vex = \veb, B \vex \leq \ved \}$ be a pointed polyhedron. The set of vertices of the polytope
\begin{align*}
P_{A,B} = \{(\vex, \vey^+, \vey^-) \in \R^{n + 2m_B} \colon A\vex = \ve0, \ B\vex = \vey^+ - \vey^-, \ ||\vey^+||_1 + ||\vey^-||_1 = 1,   \  \vey^+, \vey^- \geq \ve0\}.
\end{align*}
is $S_1 \cup T'_1$, where $S_1$ consists of the scaled extreme rays from $S$ of $C_{A,B}$ and $T'_1$ consists of the scaled extreme rays from $T'$. 
\end{theorem}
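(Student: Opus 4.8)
The plan is to realize $P_{A,B}$ as the truncation of the pointed cone $C_{A,B}$ by a hyperplane on which the relevant linear functional is \emph{strictly} positive, and then invoke the standard correspondence between the extreme rays of such a cone and the vertices of its truncation, using the description of the extreme rays of $C_{A,B}$ from \Cref{thm:circuits_as_rays}. First I would note that for every $(\vex, \vey^+, \vey^-) \in C_{A,B}$ one has $\vey^+, \vey^- \geq \ve0$, so on $C_{A,B}$ the expression $||\vey^+||_1 + ||\vey^-||_1$ coincides with a single linear functional $\ell(\vex,\vey^+,\vey^-) := \sum_i (\vey^+_i + \vey^-_i)$. Hence $P_{A,B} = C_{A,B} \cap \{\ell = 1\}$. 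The key observation is that $\ell$ is strictly positive on $C_{A,B} \setminus \{\ve0\}$: if $\ell(\vex,\vey^+,\vey^-) = 0$ then $\vey^+ = \vey^- = \ve0$, so $B\vex = \ve0$ and $A\vex = \ve0$; since $P$ is pointed we have $\rank\binom{A}{B} = n$, forcing $\vex = \ve0$. This is the only place where pointedness of $P$ enters.

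Next I would use the Minkowski--Weyl description of $C_{A,B}$ as the conical hull of its extreme rays, which by \Cref{thm:circuits_as_rays} form the set $S \cup T'$. Writing an arbitrary point of $P_{A,B}$ as a nonnegative combination $\sum_j \lambda_j \ver_j$ with $\ver_j \in S \cup T'$ and imposing $\ell = 1$ yields $\sum_j \lambda_j \ell(\ver_j) = 1$ with every $\ell(\ver_j) > 0$; thus every point of $P_{A,B}$ is a convex combination of the scaled rays $\ver_j / \ell(\ver_j)$. This shows simultaneously that $P_{A,B}$ is bounded (a polytope) and that its vertex set is contained in $\{\, \ver_j/\ell(\ver_j) : \ver_j \in S \cup T'\,\} = S_1 \cup T'_1$, where for $\ver = (\veg,\vey^+,\vey^-) \in S$ the scaling is $\ell(\ver) = ||B\veg||_1$ and for $\ver \in T'$ it is $\ell(\ver) = 2$.

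Finally I would argue the reverse inclusion, i.e.\ that each scaled ray is genuinely a vertex. Since $S \cup T'$ is a minimal (irredundant) generating set of the pointed cone $C_{A,B}$, no ray $\R_{\geq 0}\ver_j$ lies in the conical hull of the remaining ones, so $\ver_j/\ell(\ver_j)$ cannot be a convex combination of the other scaled rays and is therefore a vertex of $P_{A,B}$; distinct extreme rays clearly give distinct points. Alternatively, one can use the zero-set characterization already employed in the proof of \Cref{thm:circuits_as_rays}: the constraints of $P_{A,B}$ active at $\ver_j/\ell(\ver_j)$ include those of $C_{A,B}$ active at $\ver_j$, which cut out exactly the ray $\R_{\geq 0}\ver_j$, and adding $\ell = 1$ pins the scalar, so the active face is a single point. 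There is no serious obstacle here: all the substance is in \Cref{thm:circuits_as_rays}, and the only points demanding care are the strict positivity of the normalizing functional (to guarantee boundedness and that no extreme ray is ``lost'' or truncated to a non-vertex) and the bookkeeping that the extreme-ray-to-vertex map is a bijection between $S\cup T'$ and the vertex set.
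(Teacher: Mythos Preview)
Your proposal is correct and follows essentially the same route as the paper: both observe that on $C_{A,B}$ the normalizing constraint is a genuine hyperplane $\sum_i(\vey^+_i+\vey^-_i)=1$, and both reduce the result to the standard correspondence between extreme rays of a pointed cone and vertices of a hyperplane truncation. The paper's proof is in fact much terser than yours---it simply asserts that each extreme ray meets the hyperplane exactly once and that $P_{A,B}$ is the convex hull of these intersection points---whereas you have carefully justified the strict positivity of $\ell$ on $C_{A,B}\setminus\{\ve0\}$ and spelled out both inclusions for the vertex set; this extra care is sound but not a different idea.
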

\begin{proof}
Since $\vey^+$ and $\vey^-$ contain only nonnegative variables, the constraint $||\vey^+||_1 + ||\vey^-||_1 = 1$ corresponds to the hyperplane $\sum_{i=1}^{m_B} \vey_i^+ + \sum_{i=1}^{m_B} \vey_i^- = 1$. Each extreme ray of $C_{A,B}$ intersects this hyperplane exactly once. The convex hull of these intersection points gives the polytope $P_{A,B}$. 
\eoproof  \end{proof}

\Cref{thm:circuits_as_rays,thm:circuits_as_vertices} imply that the set of circuits of $P$ may be enumerated via enumeration over either the extreme rays of $C_{A,B}$ or the vertices of $P_{A,B}$. At most $m_B$ of the directions computed during this approach may belong to the set $T$ and hence will not correspond to circuits of $P$, but these are easily identified. The remaining directions from $S$ will be in one-to-one correspondence with the circuits of $P$. Hence, we obtain \Cref{alg:polyhedral_model} for enumerating $\mathcal{C}(A,B)$ using our polyhedral model.

\begin{algorithm}
\caption{Circuit Enumeration via Polyhedral Model}\label{alg:polyhedral_model}
\begin{algorithmic}[1]
\Procedure{PolyhedralModelCircuits}{$A,B$}\Comment{Computes $\mathcal{C}(A,B)$}
\State $S \gets \emptyset$
\State Use any vertex enumeration algorithm to compute the set $V$ of vertices of $P_{A,B}$.
\For{each $(\vex, \vey^+, \vey^-) \in V$}
\If{$(\vey^+ - \vey^-) \neq \ve0$}
\State $\veg \gets \vex$ scaled to coprime integer components
\State $S \gets S \cup \{\veg\}$
\EndIf
\EndFor
\State \textbf{return} $S$
\EndProcedure
\end{algorithmic}
\end{algorithm}

Although the algorithm requires enumeration over a polytope in $\R^{n + 2m_B}$---whereas $P$ originally belongs to $\R^n$---the advantage of \Cref{alg:polyhedral_model} lies in the fact that any vertex enumeration scheme may be used to compute the vertices of $P_{A,B}$. Currently, most vertex enumeration schemes are variations the Double Description method \cite{f-98} and the Avis-Fukuda method of pivoting using reverse search \cite{af-92}. While most current methods for enumerating circuits are also variants of the Double Description method \cite{gk-04}, \Cref{thm:circuits_as_vertices} implies that Avis-Fukuda pivoting methods for vertex enumeration can be used to enumerate circuits as well. In both degenerate and non-degenerate cases, pivoting methods often outperform Double Description methods---especially when parallelization is utilized---and require significantly less memory \cite{aj-15}.

\Cref{alg:polyhedral_model} also has important theoretical implications. For both the circuit enumeration problem and the vertex enumeration problem for polytopes, it is open whether or not there exists a polynomial total-time algorithm \cite{k-08}---one whose running time is polynomial in both the input and the output sizes. However, \Cref{thm:circuits_as_vertices} implies that a polynomial total-time algorithm for vertex enumeration would immediately yield a polynomial total-time scheme for circuit enumeration.

For a polyhedron $P = \{ \vex \in \R^n \colon A \vex = \veb,  \vex \geq \ve0 \}$ in standard form, the results of Theorems~\ref{thm:circuits_as_rays} and \ref{thm:circuits_as_vertices} can be simplified by reducing the number of variables in the polyhedral model. 

\begin{corollary}\label{cor:standard_circuits_as_rays}
Let $P = \{ \vex \in \R^n \colon A \vex = \veb,  \vex \geq \ve0 \}$ be a polyhedron in standard form. The pointed cone
\begin{align*}
C_{A} = \{( \vey^+, \vey^-) \in \R^{2n} \colon A( \vey^+ - \vey^-) = \ve0,  \ \vey^+, \vey^- \geq \ve0\}.
\end{align*}
is generated by the set of extreme rays $S \cup T'$, where:
\begin{enumerate}
\item The set $S := \{ (\vey^+, \vey^-) \colon   \vey_i^+ = \max\{\veg_i,0\}, \  \vey_i^- = \max\{-\veg_i,0\},  \ \veg \in \mathcal{C}(A)\}$ gives the circuits of $P$.
\item The set $T'$ is a subset of $T := \{ (\vey^+, \vey^-) \colon \vey^+_i = \vey^-_i = 1 \text{ for some $i \leq n$}, \  \vey^+_j = \vey^-_j = 0 \text{ for $j \neq i$}\}$ and has size at most $n$.
\end{enumerate}
\end{corollary}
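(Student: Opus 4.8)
The plan is to deduce this corollary directly from \Cref{thm:circuits_as_rays} rather than reproving it from scratch. First I would observe that a standard-form polyhedron $P = \{\vex \in \R^n \colon A\vex = \veb,\ \vex \ge \ve0\}$ is a general polyhedron in the sense of \Cref{thm:circuits_as_rays} with the choice $B := -I$ and $\ved := \ve0$: the constraint $\vex \ge \ve0$ is $-I\vex \le \ve0$; we have $\rank\binom{A}{-I} = n$ automatically, so $P$ is pointed; and since $\supp(-B\vex) = \supp(\vex)$ for all $\vex$, the support-minimality condition defining $\mathcal{C}(A,-I)$ is identical to the one defining $\mathcal{C}(A)$. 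Hence $\mathcal{C}(A,-I) = \mathcal{C}(A)$, and \Cref{thm:circuits_as_rays} applies to the cone $C_{A,-I} = \{(\vex,\vey^+,\vey^-) \colon A\vex = \ve0,\ -\vex = \vey^+ - \vey^-,\ \vey^+,\vey^- \ge \ve0\}$.

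Next I would compare $C_{A,-I}$ with the cone $C_A$ in the statement. The point is that the constraint $-\vex = \vey^+ - \vey^-$ forces $\vex = \vey^- - \vey^+$, so the $\vex$-coordinates are redundant: the projection $\pi \colon (\vex, \vey^+, \vey^-) \mapsto (\vey^+, \vey^-)$ restricts to a linear bijection from $C_{A,-I}$ onto $C_A$, with linear inverse $(\vey^+,\vey^-) \mapsto (\vey^- - \vey^+,\, \vey^+,\, \vey^-)$ (here one checks $A(\vey^- - \vey^+) = \ve0$ is the same as $A(\vey^+-\vey^-)=\ve0$). A linear isomorphism between cones carries a generating set of extreme rays bijectively to a generating set of extreme rays, so $C_A$ is generated by $\pi(S) \cup \pi(T')$.

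It then remains to rewrite $\pi(S)$ and $\pi(T')$ in the claimed form. Every element of the set $T$ from \Cref{thm:circuits_as_rays} has $\vex = \ve0$, so $\pi$ maps $T$ onto $\{(\vey^+,\vey^-) \colon \vey^+_i = \vey^-_i = 1 \text{ for some } i \le n,\ \vey^+_j = \vey^-_j = 0 \text{ for } j \ne i\}$ and $|\pi(T')| \le n$, matching the corollary. For $S$, substituting $B = -I$ into its description shows $\pi(S)$ consists of the pairs $(\vey^+,\vey^-)$ with $\vey^+_i = \max\{-\veg_i,0\}$ and $\vey^-_i = \max\{\veg_i,0\}$ as $\veg$ ranges over $\mathcal{C}(A)$; re-indexing $\veg \leftarrow -\veg$ and using the symmetry $\veg \in \mathcal{C}(A) \iff -\veg \in \mathcal{C}(A)$ turns this into exactly $\{(\vey^+,\vey^-) \colon \vey^+_i = \max\{\veg_i,0\},\ \vey^-_i = \max\{-\veg_i,0\},\ \veg \in \mathcal{C}(A)\}$.

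The argument is almost entirely bookkeeping; the only subtlety is the sign convention, since the reduction $B = -I$ naturally deposits the \emph{negative} part of $\veg$ into $\vey^+$, and one must invoke symmetry of the circuit set to recover the normalization chosen in the corollary. Should one wish to avoid that step, the alternative is to re-run the proof of \Cref{thm:circuits_as_rays} verbatim with $B$ replaced by $I$ and with $\vex$ eliminated via $\vex = \vey^+ - \vey^-$, which yields the stated signs directly.
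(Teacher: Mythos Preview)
Your proposal is correct and follows essentially the same approach as the paper: reduce to \Cref{thm:circuits_as_rays} and eliminate the redundant $\vex$-variables via the linear relation $B\vex = \vey^+ - \vey^-$. The only cosmetic difference is that the paper takes $B = I$ (noting that reversing the inequality direction does not change the circuits) so that $\vex = \vey^+ - \vey^-$ and the signs come out directly, whereas you take $B = -I$ and repair the signs afterward using the symmetry of $\mathcal{C}(A)$; you even mention the paper's choice as your alternative at the end.
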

\begin{proof}
The system of inequality constraints $B \vex \geq \ved$ for $P$ consists of $B = I$ and $\ved = \ve0$. (Switching the direction of inequality constraints does not change the set of circuits of a polyhedron.) Hence, $P$ is pointed and we apply \Cref{thm:circuits_as_rays}. Furthermore, we may eliminate the $\vex$ variables using the substitution $\vex := \vey^+ - \vey^-$. It holds that the set of extreme rays of $C_A$ contains $S$ and is a subset of $S \cup T$. In fact, unless $A$ contains a null column, the extreme rays of $C_A$ will be precisely $S \cup T$ since no unit vector will belong to $\ker(A)$. Each null column of $A$ corresponds to one element of $T$ that is not an extreme ray of $C_A$. 
\eoproof  \end{proof}

Again, by intersecting the pointed cone $C_A$ with a normalizing hyperplane, the circuits of $P$ appear as vertices of the resulting polytope $P_A$.

\begin{corollary}\label{cor:standard_circuits_as_vertices}
Let $P = \{ \vex \in \R^n \colon A \vex = \veb,  \vex \geq \ve0 \}$ be a polyhedron in standard form. The set of vertices of the polytope
\begin{align*}
P_A = \{( \vey^+, \vey^-) \in \R^{2n} \colon A( \vey^+ - \vey^-) = \ve0, \ ||\vey^+||_1 + ||\vey^-||_1 = 1,  \ \vey^+, \vey^- \geq \ve0\}
\end{align*}
is $S_1 \cup T'_1$, where $S_1$ consists of the scaled extreme rays from $S$ of $C_A$ and $T'_1$ consists of the scaled extreme rays from $T'$. 
\end{corollary}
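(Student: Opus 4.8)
The plan is to mirror, almost verbatim, the passage from \Cref{thm:circuits_as_rays} to \Cref{thm:circuits_as_vertices}, now applying it to the reduced cone $C_A$ furnished by \Cref{cor:standard_circuits_as_rays}. First I would note that since every $(\vey^+,\vey^-) \in C_A$ has nonnegative coordinates, the normalizing equation $\|\vey^+\|_1 + \|\vey^-\|_1 = 1$ is literally the affine hyperplane $H := \{(\vey^+,\vey^-) \colon \sum_{i=1}^n \vey^+_i + \sum_{i=1}^n \vey^-_i = 1\}$, so that $P_A = C_A \cap H$.

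Next I would verify that the linear functional $h(\vey^+,\vey^-) := \sum_i \vey^+_i + \sum_i \vey^-_i$ is strictly positive on $C_A \setminus \{\ve0\}$: if $h(\vey^+,\vey^-) = 0$ with $\vey^+,\vey^- \geq \ve0$, then $\vey^+ = \vey^- = \ve0$. Two consequences follow immediately. On the one hand, every extreme ray of the pointed cone $C_A$ crosses $H$ in exactly one point, namely $r / h(r)$ for any direction vector $r$ of that ray. On the other hand, $P_A$ is bounded---hence a polytope---because on $P_A$ each coordinate lies in $[0,1]$.

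Then I would invoke the same extreme-ray characterization (due to \cite{f-98} and already used in the proof of \Cref{thm:circuits_as_rays}) that truncating a pointed cone by such a hyperplane produces a polytope whose vertices are precisely the truncation points of the cone's extreme rays, with no spurious vertices arising. Since \Cref{cor:standard_circuits_as_rays} identifies the extreme rays of $C_A$ as $S \cup T'$, the vertices of $P_A$ are exactly the corresponding scaled points, which are by definition $S_1 \cup T'_1$.

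The argument is essentially bookkeeping, and no genuine obstacle is expected; the only points requiring a word of care are the boundedness of $P_A$ (equivalently, the positivity of $h$ on $C_A \setminus \{\ve0\}$) and the fact that the truncation creates no new vertices---both of which are immediate from $C_A$ being pointed together with $\vey^+,\vey^- \geq \ve0$.
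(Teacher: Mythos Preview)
Your proposal is correct and follows essentially the same approach as the paper: the paper does not even write out a separate proof for this corollary, treating it as immediate from \Cref{cor:standard_circuits_as_rays} by the identical hyperplane-truncation argument used to pass from \Cref{thm:circuits_as_rays} to \Cref{thm:circuits_as_vertices}. If anything, you supply more detail (positivity of $h$, boundedness of $P_A$, no spurious vertices) than the paper's terse two-line justification in the proof of \Cref{thm:circuits_as_vertices}.
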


We note that modified versions of the polyhedral models $C_A$ and $P_A$ from \Cref{cor:standard_circuits_as_rays,cor:standard_circuits_as_vertices} appear in the literature. For instance, $C_A$ is used in \cite{gk-04} to enumerate the elementary modes of a metabolic network modeled by the cone $\{ \vex \in \R^n \colon A \vex = \ve0, \vex \geq \ve0 \}$. Versions of $P_A$ are used in \cite{gdl-14} and \cite{gdl-15} to compute strictly improving search directions at degenerate vertices when solving a linear program.  \Cref{cor:standard_circuits_as_vertices} implies that these directions, in fact, are circuits of the underlying polyhedron.

\section{Applications of the Polyhedral Model}\label{sec:advantages}

In this section, we describe key advantages of using the polyhedral model from \Cref{sec:model}.  Namely, in addition to complete enumeration, the model may be used to directly enumerate desirable subsets of circuits (\Cref{sec:subset_enumeration}) and optimize over these sets of circuits (\Cref{sec:optimization_over_circuits}).

\subsection{Modeling Subsets of Circuits}\label{sec:subset_enumeration}
Certain faces of the polyhedral model from \Cref{sec:model} enable the direct enumeration of useful subsets of circuits. Specifically, we show how to use the model to represent only those circuits which are strictly feasible at a given point in a polyhedron (\Cref{sec:feasible}) or only those which are sign-compatible with a given direction (\Cref{sec:sign_compatible}).

\subsubsection{Strictly Feasible Circuits}\label{sec:feasible}

In an augmentation algorithm for solving linear programs, as in \cite{dhl-15,gdl-15,how-11}, a strictly feasible improving search direction is required at each iteration. Given a point $\vex_0$ in a polyhedron $P$, a direction $\veu$ is said to be \textit{strictly feasible} at $\vex_0$ if $\vex_0 + \alpha \veu \in P$ for some $\alpha > 0$. The algorithms of \cite{gdl-14} and \cite{gdl-15} solve a pricing problem over a modified version of the polytope of Corollary~\ref{cor:standard_circuits_as_vertices} in order to find such a direction in a standard form polyhedron. Thus, the search directions used by these algorithms are in fact circuits.

In the following theorem, we generalize the methods of \cite{gdl-14,gdl-15} by using a face of the polyhedral model $C_{A,B}$ from Theorem~\ref{thm:circuits_as_rays} to model all strictly feasible directions at a given point in any general polyhedron. This face may then be used either to enumerate all strictly feasible circuits at a given point or to compute a single search direction as required by an augmentation algorithm.

\begin{theorem}\label{thm:strictly_feasible}
Let $P = \{ \vex \in \R^n \colon A \vex = \veb, B \vex \leq \ved \}$ be a pointed polyhedron and let $\vex_0 \in P$ be given. Consider the face $C_{A,B,\vex_0}$ formed by intersecting the cone
\begin{align*}
C_{A,B} = \{(\vex, \vey^+, \vey^-) \in \R^{n + 2m_B} \colon A\vex = \ve0, \ B\vex = \vey^+ - \vey^-, \ \vey^+, \vey^- \geq \ve0\}
\end{align*}
with the hyperplanes $\vey^+_i = 0$ for each $i \leq m_B$ such that $(B\vex_0)_i = \ved_i$. The extreme rays of $C_{A,B,\vex_0}$ (with the exception of those at most $m_B$ rays for which $\vey^+ - \vey^- = \ve0$) give the strictly feasible circuit directions at $\vex_0$ in $P$. Furthermore, each feasible direction $\veu$ at $\vex_0$ in $P$ has a representative $(\veu, \vey^+, \vey^-)$ in $C_{A,B,\vex_0}$. 
\end{theorem}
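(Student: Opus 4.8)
The plan is to reduce the statement to \Cref{thm:circuits_as_rays} using two elementary ingredients: a clean description of the feasible directions at $\vex_0$, and the standard fact that the extreme rays of a face of a pointed polyhedral cone are exactly the extreme rays of the cone that lie in the face. I would begin by setting $I_0 := \{ i \le m_B \colon (B\vex_0)_i = \ved_i \}$, the set of inequalities active at $\vex_0$, and proving that $\veu$ is strictly feasible at $\vex_0$ if and only if $\veu \in \ker(A)$ and $(B\veu)_i \le 0$ for every $i \in I_0$; denote this cone of feasible directions by $D_{\vex_0}$. The forward implication is immediate: from $A(\vex_0 + \alpha\veu) = \veb$ we get $A\veu = \ve0$, and from $(B\vex_0)_i + \alpha(B\veu)_i \le \ved_i = (B\vex_0)_i$ (for $i \in I_0$, $\alpha > 0$) we get $(B\veu)_i \le 0$. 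Conversely, the inactive inequalities have strict slack at $\vex_0$ and so survive any sufficiently small step $\alpha > 0$, the active ones survive because $(B\veu)_i \le 0$, and the equalities survive because $A\veu = \ve0$.

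Next I would observe that $C_{A,B,\vex_0}$ is genuinely a face of $C_{A,B}$: each $\vey_i^+ \ge 0$ is valid on $C_{A,B}$, so each hyperplane $\{\vey_i^+ = 0\}$ supports $C_{A,B}$, and an intersection of faces is a face; since $P$ is pointed, $C_{A,B}$ and hence $C_{A,B,\vex_0}$ are pointed. I would then identify the $\vex$-projection of $C_{A,B,\vex_0}$ with $D_{\vex_0}$: if $(\vex, \vey^+, \vey^-) \in C_{A,B,\vex_0}$ then $(B\vex)_i = \vey_i^+ - \vey_i^- = -\vey_i^- \le 0$ for each $i \in I_0$, so $\vex \in D_{\vex_0}$; conversely, for any $\veu \in D_{\vex_0}$ the standard representative $\vey_i^+ := \max\{(B\veu)_i, 0\}$, $\vey_i^- := \max\{-(B\veu)_i, 0\}$ lies in $C_{A,B}$ and has $\vey_i^+ = 0$ for every $i \in I_0$ (as $(B\veu)_i \le 0$ there), hence lies in $C_{A,B,\vex_0}$. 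Together with the previous paragraph, this already proves the ``furthermore'' clause of the theorem.

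For the assertion about extreme rays I would invoke that the extreme rays of the face $C_{A,B,\vex_0}$ are precisely the extreme rays of $C_{A,B}$ contained in $C_{A,B,\vex_0}$ (the nontrivial direction being that a conic decomposition $\ver = \ver_1 + \ver_2$ of a point of a face into points of the cone forces $\ver_1, \ver_2$ back into the face, so extremality transfers). By \Cref{thm:circuits_as_rays} the extreme rays of $C_{A,B}$ are $S \cup T'$. An element $(\veg, \vey^+, \vey^-) \in S$ lies in $C_{A,B,\vex_0}$ iff $\max\{(B\veg)_i, 0\} = 0$ for all $i \in I_0$, i.e.\ iff $\veg \in D_{\vex_0}$, i.e.\ (by the first paragraph) iff $\veg$ is strictly feasible at $\vex_0$; and every element of $T'$ has $\vey^+ - \vey^- = \ve0$, with at most $m_B$ such rays. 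Since $C_{A,B}$ is pointed, the extreme rays with $\vey^+ - \vey^- = \ve0$ are exactly those with $\vex = \ve0$, so discarding them leaves exactly the $S$-rays lying in the face; these are, by \Cref{thm:circuits_as_rays}, in one-to-one correspondence with the circuit directions in $D_{\vex_0}$, i.e.\ the strictly feasible circuit directions at $\vex_0$. This is the claim.

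The main obstacle is purely the bookkeeping of the last step: being careful about exactly which $S$- and $T'$-rays survive the intersection with $\{\vey_i^+ = 0 : i \in I_0\}$, matching that against the feasibility condition of the first paragraph, and noting that pointedness of $C_{A,B}$ makes ``$\vex = \ve0$'' and ``$\vey^+ - \vey^- = \ve0$'' equivalent so that the exceptional $T'$-rays are exactly the ones the theorem sets aside. The characterization of $D_{\vex_0}$ and the ``furthermore'' clause are routine polyhedral facts, and the transfer of extremality to a face is standard.
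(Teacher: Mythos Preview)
Your proof is correct and, for the extreme-ray characterization, essentially identical to the paper's: both note that $C_{A,B,\vex_0}$ is a face of $C_{A,B}$, invoke that extreme rays of a face are precisely the extreme rays of the ambient cone lying in the face, and then check which rays from $S$ and $T'$ of \Cref{thm:circuits_as_rays} survive the constraints $\vey_i^+ = 0$ for $i \in I_0$.

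The one genuine difference is in the ``furthermore'' clause. You prove it directly: having already characterized the feasible cone $D_{\vex_0}$ by $A\veu = \ve0$ and $(B\veu)_i \le 0$ for $i \in I_0$, you simply observe that the canonical lift $\vey_i^\pm = \max\{\pm(B\veu)_i,0\}$ automatically has $\vey_i^+ = 0$ on $I_0$ and hence lands in $C_{A,B,\vex_0}$. The paper instead makes a forward reference to the conformal-sum machinery of \Cref{sec:sign_compatible}: it decomposes $\veu$ into sign-compatible circuits via \Cref{prop:conformal_sum}, argues each is strictly feasible via \Cref{cor:sign-compatible_feasible}, and sums the corresponding $S$-rays to recover the same representative. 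Your route is shorter, self-contained, and avoids the forward dependency; the paper's route has the minor expository benefit of exhibiting the representative as a conic combination of extreme rays, but that is not needed for the statement as written.
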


\begin{proof}
As a face of $C_{A,B}$, the extreme rays of $C_{A,B,\vex_0}$ are a subset of the extreme rays of $C_{A,B}$. Recall that the extreme rays of $C_{A,B}$ can be partitioned into the sets $S$ and $T' \subseteq T$ as defined in \Cref{thm:circuits_as_rays}. We show that an extreme ray $(\veg, \vey^+, \vey^-) \in S$ is an extreme ray of $C_{A,B, \vex_0}$ if and only if $\veg$ is a strictly feasible circuit at $\vex_0$. Hence, these extreme rays of $C_{A,B, \vex_0}$ correspond to the strictly feasible circuits at $\vex_0$ in $P$. The remaining at most $m_B$ extreme rays of $C_{A,B, \vex_0}$ must come from $T$.

First, suppose $(\veg, \vey^+, \vey^-)$ is an extreme ray of $C_{A,B, \vex_0}$ that belongs to the set $S$ of Theorem~\ref{thm:circuits_as_rays}. Then $(B\veg)_i \leq 0$ for each $i$ such that $(B\vex_0)_i = \ved_i$. If the exists no $i \leq m_B$ such that $(B\veg)_i > 0$, we have $\vex_0 + \alpha \veg \in P$ for any $\alpha > 0$ since $\veg \in \ker(A)$. Therefore, assume there exists at least one index $i$ such that $(B\veg)_i > 0$, and set $\alpha := \min\{ \frac{1}{(B \veg)_i}(\ved_i - (B \vex_0)_i) \colon (B\veg)_i>0 \}$. Then $\alpha > 0$ and for each $i \leq m_B$ such that $(B\veg)_i > 0$:
\begin{align*}
(B(\vex_0 + \alpha \veg))_i  = (B \vex_0)_i + \alpha (B \veg)_i \leq \ved_i.
\end{align*}
Hence, $\vex_0 + \alpha \veg \in P$, so $\veg$ is a strictly feasible circuit. All extreme rays of $C_{A,B,\vex_0}$ that do not belong to the set $T$ of \Cref{thm:circuits_as_rays} must correspond to such a circuit. 

Conversely, if $\veg$ is a strictly feasible circuit at $\vex_0$, it must hold that the corresponding extreme ray $(\veg, \vey^+, \vey^-) \in S$ of $C_{A,B}$ satisfies $(B\veg)_i \leq 0$ and subsequently $\vey^+_i = 0$ for each $i \leq m_B$ where $(B \vex_0)_i = \ved_i$. Therefore, $(\veg, \vey^+, \vey^-)$ belongs to $C_{A,B,\vex_0}$ and must be an extreme ray of $C_{A,B,\vex_0}$. 

Finally, we show that any strictly feasible direction $\veu$ at $\vex_0$ has a representative in $C_{A,B, \vex_0}$. Using results from the upcoming \Cref{sec:sign_compatible}, we obtain a short proof. Namely, since $\veu$ belongs to $\ker(A)$, it holds by \Cref{prop:conformal_sum} that $\veu$ can be expressed as a sum of sign-compatible circuits: $\veu = \sum_{i=1}^t \lambda_i \veg_i$ with $\lambda \geq \ve0$. \Cref{cor:sign-compatible_feasible} implies that each $\veg_i$ used in this sum is a strictly feasible circuit at $\vex_0$ and hence corresponds to an extreme ray $\ver_i \in S$ of $C_{A,B,\vex_0}$. Thus, the conic combination $\sum_{i=1}^t \lambda_i \ver_i$ of these extreme rays yields a representative $(\veu, \vey^+, \vey^-)$ of $\veu$ in $C_{A,B,\vex_0}$ where $\vey_i^+ = \max\{(B\veu)_i,0\}$ and  $\vey_i^- = \max\{-(B\veu)_i,0\}$.
\eoproof  \end{proof}

\Cref{thm:strictly_feasible} implies that the augmentation algorithms of \cite{gdl-14,gdl-15} may be generalized to a polyhedron $P$ of any form since the cone $C_{A,B,\vex_0}$ models all feasible directions at a point $\vex_0 \in P$. Furthermore, the set of all strictly feasible circuits at $\vex_0$ may be directly enumerated via an extreme ray enumeration scheme on $C_{A,B,\vex_0}$ or a vertex enumeration scheme on the corresponding polytope $P_{A,B, \vex_0}$ formed by intersecting $C_{A,B,\vex_0}$ with the hyperplane $||\vey^+||_1 + ||\vey^-||_1 = 1$. 

\subsubsection{Sign-compatible Circuits}\label{sec:sign_compatible}The model from \Cref{thm:circuits_as_rays} can also be used to represent only those circuits which are sign-compatible with a given target direction. Two vectors $\vex, \vey \in \R^n$ are said to be \textit{sign-compatible} if they belong to the same orthant of $\R^n$---that is, if $\vex_i \cdot \vey_i \geq 0$ for $i=1,...,n$. As in \cite{bdf-16}, we then say that $\vex$ and $\vey$ are \textit{sign compatible with respect to a matrix $B$} if the corresponding vectors $B \vex$ and $B \vey$ are sign-compatible.  

An important property of the set of circuits of a polyhedron is the so-called \textit{conformal sum} property \cite{o-10}, due to Graver. For a polyhedron $P = \{ \vex \in \R^n \colon A \vex = \veb,  \vex \geq \ve0 \}$ in standard form, the property states that any direction $\veu \in \ker(A)$ can be expressed as a conformal sum of circuits: $\veu = \sum_{i=1}^t \lambda_i \veg_i$ where $\lambda \geq \ve0$ and the circuits $\veg_1,...,\veg_t \in \mathcal{C}(A)$ are sign-compatible with each other and with $\veu$. In fact, the set of circuits is the unique inclusion-minimal set of directions that has this property \cite{g-75}, which is the basis of several algorithms used to enumerate the set of circuits of a standard form polyhedron \cite{h-03}. In the following \Cref{prop:conformal_sum}, we state this property generalized to a polyhedron in any form.

\begin{proposition}[\cite{g-75}]\label{prop:conformal_sum}
Let $P = \{ \vex \in \R^n \colon A \vex = \veb, B \vex \leq \ved \}$ be a polyhedron with circuits $\mathcal{C}(A,B)$. Any vector $\veu \in \ker(A)$ is a sum of sign-compatible circuits. That is, $\veu = \sum_{i=1}^t \lambda_i \veg_i$ where $\veg_i \in \mathcal{C}(A,B)$, $\lambda_i \geq 0$, and $\veg_i$ is sign-compatible with $\veu$ with respect to $B$ for each $i \leq t$.
\end{proposition}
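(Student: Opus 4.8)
The plan is to reduce Proposition~\ref{prop:conformal_sum} to the standard-form version of the conformal-sum property, which is classical and due to Graver. The key observation is that the polyhedral model $C_{A,B}$ from \Cref{thm:circuits_as_rays} translates a general polyhedron into a setting where "sign-compatible with respect to $B$'' becomes ordinary sign-compatibility in the lifted coordinates. First I would take $\veu \in \ker(A)$ and form the lifted vector $\ver_\veu := (\veu, \vey^+, \vey^-)$ with $\vey_i^+ := \max\{(B\veu)_i, 0\}$ and $\vey_i^- := \max\{-(B\veu)_i, 0\}$. By construction $A\veu = \ve0$, $B\veu = \vey^+ - \vey^-$, and $\vey^+, \vey^- \geq \ve0$, so $\ver_\veu \in C_{A,B}$. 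Since $C_{A,B}$ is a pointed cone generated by the extreme rays $S \cup T'$ of \Cref{thm:circuits_as_rays}, we may write $\ver_\veu$ as a nonnegative combination $\ver_\veu = \sum_{i} \lambda_i \ver_i$ of these generators with $\lambda_i \geq 0$.

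The next step is to argue that no generator from $T'$ actually appears in this decomposition. Each element of $T'$ has the form $(\ve0, \vey^+, \vey^-)$ with $\vey_j^+ = \vey_j^- = 1$ for a single index $j$ and all other entries zero. But for our $\ver_\veu$, at most one of $\vey_j^+, \vey_j^-$ is nonzero for each $j$ (they are the positive and negative parts of $(B\veu)_j$). In a nonnegative combination of vectors all of which have nonnegative entries, the support of the sum contains the support of each summand with positive coefficient; so if some $T'$-generator appeared with $\lambda_i > 0$, then $\ver_\veu$ would have both its $\vey_j^+$ and $\vey_j^-$ coordinates positive, a contradiction. Hence $\ver_\veu = \sum_{i=1}^t \lambda_i \ver_i$ with each $\ver_i = (\veg_i, \vey_i^+, \vey_i^-) \in S$ and $\lambda_i \geq 0$, where by definition of $S$ each $\veg_i \in \mathcal{C}(A,B)$. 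Reading off the first $n$ coordinates gives $\veu = \sum_{i=1}^t \lambda_i \veg_i$.

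It remains to check sign-compatibility of each $\veg_i$ with $\veu$ with respect to $B$, i.e. that $B\veg_i$ and $B\veu$ lie in a common orthant of $\R^{m_B}$. Fix a coordinate $k \leq m_B$ and suppose $(B\veu)_k > 0$; then in $\ver_\veu$ we have $\vey_k^+ = (B\veu)_k > 0$ and $\vey_k^- = 0$. Since $\ver_\veu = \sum_i \lambda_i \ver_i$ is a nonnegative combination of vectors with nonnegative $\vey^-$-coordinates, every summand with $\lambda_i > 0$ has $(\vey_i^-)_k = 0$, which for $\ver_i \in S$ forces $(B\veg_i)_k = \max\{(B\veg_i)_k, 0\} - 0 \geq 0$. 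Symmetrically, if $(B\veu)_k < 0$ then $(B\veg_i)_k \leq 0$ for all $i$ with $\lambda_i > 0$; and if $(B\veu)_k = 0$ there is nothing to check for that coordinate. Thus $(B\veg_i)_k \cdot (B\veu)_k \geq 0$ for every $k$, which is exactly sign-compatibility with respect to $B$. This completes the argument; sign-compatibility of the $\veg_i$ with each other then follows automatically, since each shares an orthant with $\veu$ coordinatewise.

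I expect the main obstacle to be the bookkeeping in the second and third paragraphs: one must be careful that the generator decomposition of $\ver_\veu$ in the pointed cone $C_{A,B}$ genuinely uses only extreme rays (which \Cref{thm:circuits_as_rays} provides, but one should invoke it cleanly) and that the support/orthant arguments handle the interaction between the $\vey^+$ and $\vey^-$ blocks correctly. A subtle point worth stating explicitly is why the lifted decomposition cannot ``hide'' cancellation — this is precisely because all the $\vey$-coordinates involved are nonnegative, so conic combinations are support-monotone, and it is this monotonicity that delivers both the exclusion of $T'$ and the sign-compatibility conclusion.
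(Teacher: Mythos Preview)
Your proposal is correct and follows essentially the same route as the paper: lift $\veu$ to $\ver_\veu=(\veu,\vey^+,\vey^-)\in C_{A,B}$, decompose $\ver_\veu$ as a conic combination of extreme rays, and read off the circuit decomposition from the first block of coordinates. The only cosmetic difference is that the paper first passes to the face $C_{A,B,\veu}$ of \Cref{thm:sign-compatible_rays}, whose extreme rays are \emph{by construction} the sign-compatible circuits, so the exclusion of $T'$ and the orthant argument come for free; you instead stay in the full cone $C_{A,B}$ and derive both facts directly from support-monotonicity of nonnegative conic combinations. Either packaging works, and your explicit support argument is exactly what underlies the paper's invocation of \Cref{thm:sign-compatible_rays}. (One minor remark: your opening sentence announces a reduction to the classical standard-form conformal-sum property, but what you actually do---and what the paper does---is give a self-contained proof via the polyhedral model, not a reduction to Graver's original result.)
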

\noindent In addition, it can be shown that there exists a sign-compatible sum $\veu = \sum_{i=1}^t \lambda_i \veg_i$ in which $\supp(B\veg_i) \centernot\subseteq \bigcup_{j > i}\supp(B\veg_{j})$ for each $i$, implying linear independence of the the $B\veg_i$'s and hence $t \leq n - \rank(A)$ \cite{o-10}.

We show that Proposition~\ref{prop:conformal_sum} is a direct consequence of Theorem~\ref{thm:circuits_as_rays}. Namely, a face of the cone $C_{A,B}$ can be used to model the complete subset of circuits which are sign-compatible with respect to a given direction $\veu \in \ker(A)$.

\begin{theorem}\label{thm:sign-compatible_rays}
Let $P = \{ \vex \in \R^n \colon A \vex = \veb, B \vex \leq \ved \}$ be a pointed polyhedron and let $\veu \in \ker(A)$ be given. Consider cone $C_{A,B,\veu}$ formed by intersecting 
\begin{align*}
C_{A,B} = \{(\vex, \vey^+, \vey^-) \in \R^{n + 2m_B} \colon A\vex = \ve0, \ B\vex = \vey^+ - \vey^-, \ \vey^+, \vey^- \geq \ve0\}
\end{align*}
with the following hyperplanes for each $i \leq m_B$:
\begin{enumerate}
\item $\vey^-_i = 0$ if $(B\veu)_i \geq 0$
\item  $\vey^+_i = 0$ if $(B\veu)_i \leq 0$.
\end{enumerate}
Then $C_{A,B,\veu}$ is a face of $C_{A,B}$ whose extreme rays correspond to the circuits of $P$ which are sign-compatible with $\veu$ with respect to $B$.
\end{theorem}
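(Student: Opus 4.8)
The plan is to reduce everything to Theorem~\ref{thm:circuits_as_rays}, which has already determined the extreme rays $S \cup T'$ of the ambient cone $C_{A,B}$; what remains is only to decide which of those rays survive the extra hyperplane cuts. First I would check that $C_{A,B,\veu}$ genuinely is a face of $C_{A,B}$: on $C_{A,B}$ the inequalities $\vey_i^+ \ge 0$ and $\vey_i^- \ge 0$ are all valid, so each hyperplane $\{\vey_i^+ = 0\}$ and $\{\vey_i^- = 0\}$ is supporting, and intersecting $C_{A,B}$ with any collection of its supporting hyperplanes yields a face; hence $C_{A,B,\veu}$ is a (pointed) face. I would then invoke the standard fact that the extreme rays of a face of a pointed cone are exactly those extreme rays of the whole cone contained in the face, so it suffices to test membership in $C_{A,B,\veu}$ for each ray of $S \cup T'$.

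For a ray $(\veg, \vey^+, \vey^-) \in S$ we have $\vey_i^+ = \max\{(B\veg)_i,0\}$ and $\vey_i^- = \max\{-(B\veg)_i,0\}$, so the defining constraints of $C_{A,B,\veu}$ say exactly: $(B\veg)_i \ge 0$ whenever $(B\veu)_i \ge 0$, and $(B\veg)_i \le 0$ whenever $(B\veu)_i \le 0$. Splitting on the sign of $(B\veu)_i$ — with the case $(B\veu)_i = 0$ forcing $(B\veg)_i = 0$ — these conditions are equivalent to $(B\veg)_i (B\veu)_i \ge 0$ for every $i$, i.e., to $\veg$ being sign-compatible with $\veu$ with respect to $B$. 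So the rays of $S$ lying in $C_{A,B,\veu}$ are precisely those whose circuit is sign-compatible with $\veu$ with respect to $B$. For a ray $(\ve0, \vey^+, \vey^-) \in T'$, which has $\vey_i^+ = \vey_i^- = 1$ at a single index $i$, at least one of conditions (1), (2) is imposed at that index (since $(B\veu)_i \ge 0$ or $(B\veu)_i \le 0$) and is violated; hence no ray of $T'$ belongs to $C_{A,B,\veu}$. Combining these two observations, the extreme rays of $C_{A,B,\veu}$ are exactly the members of $S$ corresponding to circuits sign-compatible with $\veu$, which is the claim.

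I do not anticipate a real obstacle — the argument is essentially bookkeeping on top of Theorem~\ref{thm:circuits_as_rays}; the only point needing a moment's care is the boundary case $(B\veu)_i = 0$, where both hyperplane families apply and pin $(B\vex)_i$ to $0$, which is harmless since $0\cdot(B\veu)_i = 0 \ge 0$. As a corollary I would then recover Proposition~\ref{prop:conformal_sum}: given $\veu \in \ker(A)$, the point $(\veu, \vey^+, \vey^-)$ with $\vey_i^+ = \max\{(B\veu)_i,0\}$ and $\vey_i^- = \max\{-(B\veu)_i,0\}$ lies in $C_{A,B,\veu}$, so it is a nonnegative combination $\sum_i \lambda_i \ver_i$ of the extreme rays $\ver_i \in S$ of $C_{A,B,\veu}$; reading off the first $n$ coordinates gives $\veu = \sum_i \lambda_i \veg_i$ with each $\veg_i \in \mathcal{C}(A,B)$ sign-compatible with $\veu$ with respect to $B$.
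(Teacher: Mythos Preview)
Your proposal is correct and follows essentially the same route as the paper: both arguments observe that $C_{A,B,\veu}$ is a face of $C_{A,B}$, invoke the fact that its extreme rays must come from $S \cup T'$ of Theorem~\ref{thm:circuits_as_rays}, rule out every ray of $T'$ because at each index at least one of the hyperplanes $\vey_i^\pm = 0$ is imposed, and check that a ray of $S$ survives precisely when $B\veg$ is sign-compatible with $B\veu$. Your write-up is simply more explicit about why $C_{A,B,\veu}$ is a face and about the boundary case $(B\veu)_i = 0$, and your closing paragraph recovering Proposition~\ref{prop:conformal_sum} matches the paper's subsequent proof of that proposition almost verbatim.
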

\begin{proof}
The set $R$ of extreme rays of $C_{A,B, \veu}$ is a subset of the extreme rays of $C_{A,B}$. Let $S$ and $T$ be the sets of extreme rays of $C_{A,B}$ given by Theorem~\ref{thm:circuits_as_rays}. Clearly no ray of $T$ can belong to $R$ since either $\vey^+_i = 0$ or $\vey^-_i = 0$ for each $i \leq m_B$ in each vector of $C_{A,B, \veu}$. Any $(\veg, \vey^+, \vey^-) \in S$ belongs to $C_{A,B, \veu}$ if and only if $B\veg$ is sign-compatible with $B\veu$. Finally, as a face of $C_{A,B}$, a ray of $C_{A,B, \veu}$ belongs to $R$ if and only if it is also an extreme ray of $C_{A,B, \veu}$. 
\eoproof  \end{proof}

Hence, for any $\veu \in \ker(A)$, \Cref{thm:sign-compatible_rays} implies that the subset of $\mathcal{C}(A,B)$ that is sign-compatible with $\veu$ with respect to $B$ can be directly enumerated via extreme ray enumeration over $C_{A,B,\veu}$ or via vertex enumeration over the corresponding polytope $P_{A,B, \veu}$. This approach is a vast improvement over a naive approach for enumerating all such circuits: compute every $\veg \in \mathcal{C}(A,B)$ satisfying $\supp(B\veg) \subseteq \supp(B \veu)$ and then remove those which are not sign-compatible. Another possible way to enumerate sign-compatible circuits would be to use a variation of Pottier's algorithm to find a minimal generating set of $\ker(A)$ intersected with the appropriate cone of $\R^n$ (see Lemma 2 in \cite{bdf-16}). However, this approach also requires the computation of a generating set initially larger than the desired subset of circuits \cite{h-03}.  

Additionally, while previous proofs of the conformal sum property rely on induction or an algorithmic approach (see for instance the proof of \Cref{thm:algorithm}), Theorem~\ref{thm:sign-compatible_rays} provides an immediate proof of Proposition~\ref{prop:conformal_sum}. 
\begin{proof}[of \Cref{prop:conformal_sum}]
Any $\veu \in \ker(A)$ has a corresponding vector $\ver := (\veu, \vey^+, \vey^-) \in C_{A,B}$ such that $\vey_i^+ = \max\{(B\veu)_i,0\}$ and $\vey_i^- = \max\{-(B\veu)_i,0\}\}$ for each $i \leq m_B$. It follows that $\ver \in C_{A, B, \veu}$, implying that $\ver$ can be expressed as a conic combination of extreme rays of $C_{A,B,\veu}$: 
\begin{align*}
(\veu, \vey^+, \vey^-) = \sum_{i=1}^t \lambda_i (\veg_i, (\vey^+)^i, (\vey^-)^i), \ \lambda \geq \ve0.
\end{align*}
Thus, $\veu = \sum_{i=1}^t \lambda_i \veg_i$, where each $\veg_i$ is sign-compatible with $\veu$ with respect to $B$ by Theorem~\ref{thm:sign-compatible_rays}. In addition, the bound $t \leq n - \rank(A)$ follows from Carath\'{e}odory's theorem. 
\eoproof  \end{proof}
\noindent Note that this proof is non-constructive, i.e., it does not provide the circuits which are used in the sign-compatible sum. We will discuss methods for using the polyhedral model to actually construct such a sum in \Cref{sec:construct_sign_compatible_sums}.

We can also use $C_{A,B,\veu}$ to immediately see that sign-compatible circuit directions are strictly feasible. Specifically, let $\vex_1, \vex_2$ be distinct points in a polyhedron and set $\veu := \vex_2 - \vex_1$. It follows from Theorem~\ref{thm:strictly_feasible} that all directions modeled by $C_{A,B,\veu}$ are strictly feasible at $\vex_1$. Hence, in this situation, sign-compatible circuits can be interpreted as a specific subset of strictly feasible directions. We show how to use these circuits to construct walks between $\vex_1$ and $\vex_2$ in \Cref{sec:construct_sign_compatible_sums}.

\begin{corollary}\label{cor:sign-compatible_feasible}
Let $P = \{ \vex \in \R^n \colon A \vex = \veb, B \vex \leq \ved \}$ be a pointed polyhedron and let $\vex_1, \vex_2$ be distinct points in $P$. Set $\veu := \vex_2 - \vex_1$ and consider $C_{A,B,\veu}$ as defined in Theorem~\ref{thm:sign-compatible_rays}. The sign-compatible circuits corresponding to the extreme rays of $C_{A,B,\veu}$ provide strictly feasible circuit directions at $\vex_1$ in $P$.
\end{corollary}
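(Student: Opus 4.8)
The plan is to combine the two key results already established in this section: \Cref{thm:sign-compatible_rays}, which tells us that the extreme rays of $C_{A,B,\veu}$ correspond exactly to the circuits of $P$ sign-compatible with $\veu$ with respect to $B$, and \Cref{thm:strictly_feasible}, which tells us which cone models all feasible directions at $\vex_1$. The whole corollary should then follow by checking that $C_{A,B,\veu}$ is a subcone (in fact a face) of the cone $C_{A,B,\vex_1}$ from \Cref{thm:strictly_feasible}, so that every extreme ray of $C_{A,B,\veu}$ that is a genuine circuit is also an extreme ray of $C_{A,B,\vex_1}$ and hence a strictly feasible circuit at $\vex_1$.

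First I would spell out the defining hyperplanes of the two cones. The cone $C_{A,B,\vex_1}$ is obtained from $C_{A,B}$ by imposing $\vey^+_i = 0$ for every $i \leq m_B$ with $(B\vex_1)_i = \ved_i$, i.e., for every constraint active at $\vex_1$. The cone $C_{A,B,\veu}$ is obtained from $C_{A,B}$ by imposing $\vey^-_i = 0$ whenever $(B\veu)_i \geq 0$ and $\vey^+_i = 0$ whenever $(B\veu)_i \leq 0$. The crucial observation is that since $\vex_1, \vex_2 \in P$ and $\veu = \vex_2 - \vex_1$, for any index $i$ with $(B\vex_1)_i = \ved_i$ we have $(B\vex_2)_i \leq \ved_i = (B\vex_1)_i$, hence $(B\veu)_i = (B\vex_2)_i - (B\vex_1)_i \leq 0$; therefore the hyperplane $\vey^+_i = 0$ is among those imposed in the definition of $C_{A,B,\veu}$. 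This shows every defining hyperplane of $C_{A,B,\vex_1}$ is also a defining hyperplane of $C_{A,B,\veu}$, so $C_{A,B,\veu} \subseteq C_{A,B,\vex_1}$, and in fact $C_{A,B,\veu}$ is a face of $C_{A,B,\vex_1}$.

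Next I would conclude: the extreme rays of $C_{A,B,\veu}$ that correspond to circuits (all of them, in fact, by \Cref{thm:sign-compatible_rays}, since no ray of the set $T$ survives the sign-compatibility hyperplanes) are rays of $C_{A,B,\vex_1}$; being extreme rays of the smaller cone, they lie on extreme rays of the larger cone, which by \Cref{thm:strictly_feasible} are precisely the strictly feasible circuit directions at $\vex_1$ together with the at most $m_B$ rays of $T$. Since our rays are genuine circuits, they are strictly feasible circuit directions at $\vex_1$. A cleaner alternative, which avoids invoking the face structure, is to argue directly: an extreme ray $(\veg, \vey^+, \vey^-)$ of $C_{A,B,\veu}$ coming from $S$ satisfies $\vey^+_i = 0$, hence $(B\veg)_i \leq 0$, for every $i$ with $(B\vex_1)_i = \ved_i$; then the computation in the proof of \Cref{thm:strictly_feasible} with $\alpha := \min\{\frac{1}{(B\veg)_i}(\ved_i - (B\vex_1)_i) : (B\veg)_i > 0\}$ (or any $\alpha > 0$ if no such $i$ exists) yields $\vex_1 + \alpha \veg \in P$, so $\veg$ is strictly feasible at $\vex_1$.

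I do not anticipate a real obstacle here; the one point that needs care is the sign bookkeeping that shows $(B\veu)_i \leq 0$ on constraints active at $\vex_1$, which is exactly what ties the sign-compatibility hyperplanes of $C_{A,B,\veu}$ to the feasibility hyperplanes of $C_{A,B,\vex_1}$. Everything else is a direct appeal to \Cref{thm:sign-compatible_rays} and \Cref{thm:strictly_feasible}. I would present the short version (the inclusion-of-cones argument) as the main proof and perhaps remark that it also re-proves, in this special case, the representability claim used inside the proof of \Cref{thm:strictly_feasible} itself.
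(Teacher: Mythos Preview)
Your proposal is correct and follows essentially the same approach as the paper: show that for every index $i$ with $(B\vex_1)_i = \ved_i$ one has $(B\veu)_i \leq 0$, so the hyperplane $\vey^+_i = 0$ defining $C_{A,B,\vex_1}$ is already among those defining $C_{A,B,\veu}$; hence $C_{A,B,\veu}$ is a face of $C_{A,B,\vex_1}$ and its extreme rays give strictly feasible circuit directions at $\vex_1$ by \Cref{thm:strictly_feasible}. The paper's proof is just the short version of what you wrote; your added ``direct'' alternative is also fine but unnecessary.
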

\begin{proof}
Note that $\veu \in \ker(A) \setminus\{0\}$, so $C_{A,B,\veu}$ is well-defined. Any index $i$ such that $(B \vex_1)_i = \ved_i$ must also satisfy $(B \veu)_i \leq 0$, else we would have $(B \vex_2)_i = B(\vex_1)_i + (B \veu)_i > \ved_i$. Therefore, the hyperplane $\vey_i^+ = 0$ is included in the definition of $C_{A,B,\veu}$ whenever $(B \vex_1)_i = \ved_i$. Hence, $C_{A,B,\veu}$ is a face of $C_{A,B,\vex_1}$ as defined in Theorem~\ref{thm:strictly_feasible}, and all of its extreme rays give strictly feasible circuit directions at $\vex_1$ in $P$. 
\eoproof  \end{proof}

\subsection{Optimization over Circuits}\label{sec:optimization_over_circuits}

Since circuits appear as vertices in the polyhedral model $P_{A,B}$ of~\Cref{thm:circuits_as_vertices}, we can efficiently optimize over the set of normalized circuits of a polyhedron via linear programming. Additionally, by considering certain faces of the model as in \Cref{sec:feasible,sec:sign_compatible}, we may optimize over only the subset of strictly feasible or sign-compatible circuits. We show in \Cref{sec:steepest_descent} how this enables the efficient computation of a steepest-descent circuit in an augmentation scheme for solving general linear programs. In \Cref{sec:construct_sign_compatible_sums}, we show how optimization over sign-compatible circuits can be used in the construction of walks between given points of a polyhedron.

\subsubsection{Steepest-descent Circuit Augmentation}\label{sec:steepest_descent}
In \cite{dhl-15}, De Loera et al. describe an augmentation scheme for solving linear programs in standard form that uses a \textit{steepest-descent} circuit at each step. Namely, for a linear program $LP = \min\{ \vecc^T \vex \colon A \vex = \veb, \ \vex \geq \ve0 \}$  with feasible solution $\vex_0$, the authors define a steepest-descent circuit to be a strictly feasible circuit $\veg \in \mathcal{C}(A)$ at $\vex_0$ that minimizes $\vecc^T \veg / ||\veg||_1$ over all such circuits. They proceed to show that an augmentation scheme using only maximal steepest-descent circuit augmentations will never repeat a circuit direction, implying that the algorithm terminates in at most $|\mathcal{C}(A)|$ steps \cite{dhl-15}. However, the scheme relies on an oracle to provide the required steepest-descent circuits.

Independently, Gauthier et al. \cite{gdl-15} describe an algorithmic framework for solving linear programs in standard form in which augmenting directions are computed by fixing subsets of dual variables and then optimizing over a polyhedral oracle. In the special case where no dual variables are fixed (a generalization of the minimum-mean cycle-cancelling algorithm for bounded linear programs \cite{gdl-14}), the authors compute an augmenting direction by minimizing $\vecc^T (\vey^+ - \vey^-)$ over a face of the polytope of \Cref{cor:standard_circuits_as_vertices} (see Equation (7) in \cite{gdl-15} or Equation (15) in \cite{gdl-14}). Since the vertices of this polytope correspond to normalized circuits of $\mathcal{C}(A)$, the computed augmenting direction is in fact a circuit that minimizes $\vecc^T \veg / || \veg||_1$. Thus, the augmenting direction of \cite{gdl-14,gdl-15}, which is computed by solving a linear program, is equivalent to a steepest-descent circuit proposed in \cite{dhl-15}.  

However, both \cite{dhl-15} and \cite{gdl-15} describe augmentation schemes only for polyhedra in standard form. As seen in Section~\ref{sec:representation}, simply converting a general polyhedron to standard form can introduce exponentially many circuits. Since the bound given in \cite{dhl-15} on the number of steps for the steepest-descent augmentation scheme is $|\mathcal{C}(A)|$, these added circuits may significantly affect the performance of the algorithm. Therefore, we generalize the scheme for a polyhedron in general form.

We first generalize the definition of a steepest-descent circuit in order to take into account the differences between circuits of a standard form polyhedron and circuits of a general polyhedron. 
\begin{definition}\label{def:steepest}
Consider the general linear program $LP = \min\{ \vecc^T \vex \colon A \vex = \veb, \ B\vex \leq \ved \}$ with a feasible solution $\vex_0$. A steepest-descent circuit at $\vex_0$ is a strictly feasible circuit $\veg \in \mathcal{C}(A,B)$ that minimizes $\vecc^T \veg / ||B \veg||_1$ over all such circuits. 
\end{definition}
\noindent Note that this definition uses the norm of the vector $B\veg$ rather than the norm of the circuit $\veg$. 

In the remainder of this section, we show that such a circuit exhibits the same desirable properties as the steepest-descent circuits of \cite{dhl-15}, implying that an augmentation scheme using these generalized directions terminates in at most $|\mathcal{C}(A,B)|$ steps. Furthermore, we may use the polyhedral model for strictly feasible circuits from \Cref{sec:feasible} to compute a steepest-descent circuit in polynomial time.

\begin{theorem}\label{thm:steepest-descent}
Let $P = \{\vex \in \R^n \colon A \vex = \veb, B\vex \leq \ved \}$ be a pointed polyhedron, let $\vex_0 \in P$, and let $\vecc \in \R^n$. A steepest-descent circuit direction at $\vex_0$ with respect to $\vecc$ can be computed in weakly polynomial time. 
\end{theorem}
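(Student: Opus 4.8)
The plan is to reduce the computation to a single linear program over a face of the polyhedral model and then invoke a weakly polynomial LP algorithm (the ellipsoid method, or an interior-point method). First I would build the cone $C_{A,B,\vex_0}$ of \Cref{thm:strictly_feasible}: determine in polynomial time which inequalities are tight at $\vex_0$, i.e. the indices $i\le m_B$ with $(B\vex_0)_i=\ved_i$, and intersect $C_{A,B}$ with the hyperplanes $\vey^+_i=0$ for exactly those $i$. Intersecting $C_{A,B,\vex_0}$ with the single normalizing hyperplane $||\vey^+||_1+||\vey^-||_1=1$---which, since $\vey^+,\vey^-\ge\ve0$ on the cone, is the linear equation $\sum_i\vey^+_i+\sum_i\vey^-_i=1$---gives a polytope $P_{A,B,\vex_0}$, a face of $P_{A,B}$ and hence bounded. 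By \Cref{thm:strictly_feasible} (and \Cref{thm:circuits_as_vertices}) its vertices are precisely the normalizations of the strictly feasible circuit directions at $\vex_0$, together with at most $m_B$ ``degenerate'' vertices inherited from the set $T$, all of which satisfy $\vex=\ve0$ and $\vey^+=\vey^-$.

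The key observation is the identity that connects the normalization to the objective in \Cref{def:steepest}: on every extreme ray of $C_{A,B}$ lying in $S$ we have $\vey^+_i=\max\{(B\veg)_i,0\}$ and $\vey^-_i=\max\{-(B\veg)_i,0\}$, so $||\vey^+||_1+||\vey^-||_1=||B\veg||_1$, and after normalization this common value is $1$. Consequently, minimizing the linear functional $\vecc^{T}\vex$ over $P_{A,B,\vex_0}$ coincides with minimizing the scale-invariant ratio $\vecc^{T}\veg/||B\veg||_1$ over all strictly feasible circuits $\veg$ at $\vex_0$, the only extra candidates being the $T$-vertices, which contribute the value $\vecc^{T}\ve0=0$. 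Since $P_{A,B,\vex_0}$ is a bounded polytope with entries polynomial in the bit-sizes of $A$, $B$, $\ved$, and $\vex_0$, the program $\min\{\vecc^{T}\vex:(\vex,\vey^+,\vey^-)\in P_{A,B,\vex_0}\}$ is solvable in weakly polynomial time, and its optimum is attained at a vertex.

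It then remains to read off the answer. If the optimal value is strictly negative, the optimal vertex cannot be a $T$-vertex (those give $0$), so it lies in $S$; scaling its $\vex$-component to coprime integers yields a circuit of $\mathcal C(A,B)$ that is strictly feasible at $\vex_0$ and attains the minimum ratio, i.e. a steepest-descent circuit, and we return it. If the optimal value is $0$, then every strictly feasible circuit at $\vex_0$ has nonnegative ratio; writing $\vey-\vex_0$ for any $\vey\in P$ as a sign-compatible sum of circuits via \Cref{prop:conformal_sum} and applying \Cref{cor:sign-compatible_feasible} gives $\vecc^{T}\vey\ge\vecc^{T}\vex_0$, so $\vex_0$ is already optimal for $LP$ and a circuit-augmentation step terminates; if one still wishes to exhibit a minimizing circuit of ratio $0$, it can be recovered by re-optimizing a generic secondary objective over the optimal face $\{\vecc^{T}\vex=0\}\cap P_{A,B,\vex_0}$, whose $T$-vertices number at most $m_B$. (The remaining corner case, $P_{A,B,\vex_0}=\emptyset$, means there are no strictly feasible circuits at all at $\vex_0$, in which case $\vex_0$ is trivially terminal.)

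The step I expect to be the main obstacle is not the LP solve itself---which is standard---but the bookkeeping around the at most $m_B$ rays of $T$ that the model necessarily carries: they are not circuits, yet they have objective value $0$, so they can tie with, or apparently dominate, a genuine steepest-descent circuit precisely when the minimum ratio is nonnegative. The argument above isolates this by splitting on the sign of the LP optimum and using the conformal-sum characterization of optimality, so that in the only regime relevant to an augmentation step---when a strictly improving circuit exists---the LP optimum is negative and the desired circuit is produced directly.
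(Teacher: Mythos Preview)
Your proposal is correct and follows essentially the same route as the paper: set up the LP $\min\{\vecc^{T}\vex:(\vex,\vey^+,\vey^-)\in P_{A,B,\vex_0}\}$ over the normalized face of the cone of strictly feasible directions, solve it in weakly polynomial time, and read off a steepest-descent circuit from an optimal vertex, using the identity $||\vey^+||_1+||\vey^-||_1=||B\veg||_1$ on $S$-vertices to equate the LP objective with the ratio $\vecc^{T}\veg/||B\veg||_1$. Your treatment of the $T$-vertices and of the case where the optimum is $0$ is in fact more explicit than the paper's, which simply asserts that a zero optimum means no descent direction exists; the extra justification via \Cref{prop:conformal_sum} and \Cref{cor:sign-compatible_feasible} is a nice touch but not a different method.
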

\begin{proof}
Consider the following linear program:
\begin{align}
\tag{steepest}
\begin{split}\label{equation:program}
\min \ &  \vecc^T \vex \\
\text{s.t.} \  & A\vex = \ve0 \\
& B \vex = \vey^+ - \vey^- \\
& \vey^+_i = 0 \  \ \ \ \   \forall i \colon (B\vex_0)_i = \ved_i\\
 & ||\vey^+||_1 + ||\vey^-||_1 = 1 \\
  & \vey^+, \vey^- \geq \ve0.
\end{split}
\end{align}
A vertex solution $(\veg, \vey^+, \vey^-)$ can be computed in weakly polynomial time. If the optimal objective is zero, then no feasible descent directions exist at $\vex_0$. If the objective is nonzero, \Cref{thm:circuits_as_vertices} implies that $\veg$ is a circuit direction of $P$ with $||B \veg||_1 = 1$. Let $\alpha \in \R^+$ denote the unique positive scalar such that $\alpha \veg \in \mathcal{C}(A, B)$. By \Cref{thm:strictly_feasible}, $\alpha \veg$ is a strictly feasible circuit at $\vex_0$ and any other strictly feasible circuit $\veg'$ at $\vex_0$ has a representative $\veg' / ||B\veg'||_1$ in the domain of LP (\ref{equation:program}). It follows that 
\begin{align*}
\frac{\vecc^T (\alpha \veg)}{|| B (\alpha \veg)||_1} = \frac{\alpha (\vecc^T \veg)}{|\alpha| \cdot ||B \veg||_1} = \frac{\vecc^T \veg }{||B \veg||_1}   = \vecc^T \veg \leq \frac{\vecc^T \veg'}{||B\veg'||_1}.
\end{align*}
Therefore, $\veg$ is a steepest-descent circuit direction at $\vex_0$ with respect to $\vecc$. 
\eoproof  \end{proof}

Thus, LP (\ref{equation:program}) in the proof of \Cref{thm:steepest-descent} serves as an oracle in an actual implementation of the generalized steepest-descent augmentation scheme. To prove the bound on the number of augmentations, we mirror the arguments of De Loera et al. (Lemmas 4-6 and Corollary 7 in \cite{dhl-15}) in our more general setting. The proof of the following lemma, which states that a steepest-descent circuit minimizes $\vecc^T \veu / ||B\veu||_1$ over all feasible directions $\veu$ at $\vex_0$, becomes much simpler using our polyhedral model. The other proofs become a bit more technical since $B\veg$ must be considered instead of $\veg$ for norming and support-minimality.

\begin{lemma}\label{lem:steepest}
For a linear program $LP = \min\{ \vecc^T \vex \colon A \vex = \veb, \ B\vex \leq \ved \}$ over a pointed polyhedron with feasible solution $\vex_0$, a steepest-descent circuit direction $\veg$ satisfies $\vecc^T \veg / ||B \veg||_1 \leq \vecc^T \veu / ||B\veu||_1$ for any feasible direction $\veu$ at $\vex_0$. 
\end{lemma}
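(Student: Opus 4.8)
The plan is to reduce the statement to the conformal sum property (\Cref{prop:conformal_sum}) and the observation that all circuits appearing in a sign-compatible decomposition of a feasible direction are themselves feasible. First I would let $\veu$ be an arbitrary feasible direction at $\vex_0$, so there is $\alpha > 0$ with $\vex_0 + \alpha \veu \in P$; in particular $\veu \in \ker(A)$ and $(B\veu)_i \le 0$ for every $i$ with $(B\vex_0)_i = \ved_i$. By \Cref{prop:conformal_sum}, write $\veu = \sum_{j=1}^t \lambda_j \veg_j$ with $\lambda_j \ge 0$, each $\veg_j \in \mathcal{C}(A,B)$ sign-compatible with $\veu$ with respect to $B$. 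Because each $\veg_j$ lies in the same orthant as $\veu$ under multiplication by $B$, we get $(B\veg_j)_i \le 0$ for every $i$ with $(B\vex_0)_i = \ved_i$, so by the argument inside \Cref{thm:strictly_feasible} each $\veg_j$ is a strictly feasible circuit at $\vex_0$. Hence, by the defining property of the steepest-descent circuit $\veg$, we have $\vecc^T \veg / \|B\veg\|_1 \le \vecc^T \veg_j / \|B\veg_j\|_1$ for every $j$.

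The second ingredient is that sign-compatibility makes the $\ell_1$-norms add exactly: since the $B\veg_j$ all lie in one closed orthant, $\|B\veu\|_1 = \|\sum_j \lambda_j B\veg_j\|_1 = \sum_j \lambda_j \|B\veg_j\|_1$ (there is no cancellation coordinate-wise). Writing $\mu := \vecc^T \veg / \|B\veg\|_1$ for the steepest-descent ratio, the per-circuit inequality rearranges to $\vecc^T \veg_j \ge \mu \|B\veg_j\|_1$ for each $j$ — but I need to be careful about the sign of $\mu$ and about circuits with $\vecc^T\veg_j \ge 0$. If $\vecc^T\veu \ge 0$ the claimed inequality is immediate (the steepest-descent ratio is $\le 0$, since one can always check the zero case), so I would assume $\vecc^T\veu < 0$. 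Then, summing $\vecc^T\veg_j \ge \mu\|B\veg_j\|_1$ against the weights $\lambda_j \ge 0$ and using the additivity of norms,
\[
\vecc^T \veu \;=\; \sum_{j=1}^t \lambda_j\, \vecc^T \veg_j \;\ge\; \mu \sum_{j=1}^t \lambda_j \|B\veg_j\|_1 \;=\; \mu \,\|B\veu\|_1,
\]
which is exactly $\vecc^T\veu / \|B\veu\|_1 \ge \vecc^T\veg/\|B\veg\|_1$, provided $\|B\veu\|_1 > 0$. The degenerate case $B\veu = \ve0$ cannot occur for $\veu \ne \ve0$ when $P$ is pointed (then $\veu \in \ker(A)$ and $B\veu = \ve0$ forces $\veu = \ve0$), and $\veu = \ve0$ is not a direction.

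The main obstacle I anticipate is the handling of signs and of circuits that individually fail to be descent directions: the inequality $\vecc^T\veg/\|B\veg\|_1 \le \vecc^T\veg_j/\|B\veg_j\|_1$ is a comparison of ratios with positive denominators, so it is safe to clear denominators, but one must still make sure the weighted sum of the cleared inequalities recombines correctly and that the case $\vecc^T\veu \ge 0$ (where the steepest-descent circuit may have a nonnegative, in fact zero, ratio) is dispatched separately. Once the sign bookkeeping is done, the argument is just the additivity of the $\ell_1$-norm across a single orthant together with the defining minimality of $\veg$. An alternative, even shorter route is to observe directly that $\veu/\|B\veu\|_1$ is a feasible point of the linear program \eqref{equation:program} used to define the steepest-descent circuit (its coordinates satisfy $\vey_i^+ = 0$ whenever $(B\vex_0)_i = \ved_i$, by feasibility of $\veu$), so the optimal value $\vecc^T\veg$ of that program is at most $\vecc^T\veu/\|B\veu\|_1$; this avoids invoking the conformal sum entirely and is the approach I would actually present.
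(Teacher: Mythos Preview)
Your proposal is correct, and the ``alternative, even shorter route'' you sketch at the end---observing that $\veu/\|B\veu\|_1$ is feasible for LP~(\ref{equation:program}) and hence has objective value at least $\vecc^T\veg$---is precisely the paper's own proof (it invokes \Cref{thm:strictly_feasible} for the feasibility of the representative and reads off the inequality from optimality of the LP).

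Your first route via the conformal sum is a genuinely different argument: it uses only the \emph{definition} of a steepest-descent circuit (minimality of the ratio over strictly feasible circuits) together with \Cref{prop:conformal_sum} and \Cref{cor:sign-compatible_feasible}, and never appeals to the LP model. It is correct, and in fact your sign bookkeeping is more cautious than necessary: since every $\|B\veg_j\|_1$ is strictly positive (pointedness), the inequality $\vecc^T\veg_j/\|B\veg_j\|_1 \ge \mu$ clears to $\vecc^T\veg_j \ge \mu\,\|B\veg_j\|_1$ regardless of the sign of $\mu$, and the weighted sum goes through without splitting into the cases $\vecc^T\veu \ge 0$ or $<0$. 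The trade-off is that the paper's LP argument is two lines once the polyhedral model is in place, whereas the conformal-sum route is self-contained from the definition but needs the decomposition, the feasibility of each summand, and the orthant additivity of the $1$-norm.
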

\begin{proof}
As in the proof of \Cref{thm:steepest-descent}, solve LP (\ref{equation:program}) to obtain a vertex solution $(\veg, \vey^+, \vey^-)$ which yields a steepest-descent circuit direction $\veg$ at $\vex_0$. By \Cref{thm:strictly_feasible}, any other feasible direction $\veu$ at $\vex_0$ has a representative $\veu / ||B\veu||_1$ in the domain of the LP. It follows that $\vecc^T \veg / ||B \veg||_1 = \vecc^T \veg \leq \vecc^T \veu / ||B\veu||_1$.
\eoproof  \end{proof}

This lemma then implies that the steepness of consecutive steepest-descent augmentations is non-increasing.

\begin{lemma}\label{lem:non_increasing}
Let $LP = \min\{ \vecc^T \vex \colon A \vex = \veb, \ B\vex \leq \ved \}$ be a linear program over a pointed polyhedron with feasible solution $\vex_k$, let $\vex_{k+1} := \vex_k + \alpha_k \veg_k$ be a steepest-descent augmentation relative to $\vex_k$, and let $\alpha_{k+1} \veg_{k+1}$ be a steepest-descent augmentation relative to $\vex_{k+1}$. Then $\vecc^T \veg_k / ||B \veg_k||_1 \leq \vecc^T \veg_{k+1} / ||B \veg_{k+1}||_1$.
\end{lemma}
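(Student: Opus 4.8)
The plan is to obtain the inequality as a one-line consequence of \Cref{lem:steepest} once the right feasible direction at $\vex_k$ is exhibited. Abbreviate $s_k := \vecc^T\veg_k/\|B\veg_k\|_1$ and $s_{k+1} := \vecc^T\veg_{k+1}/\|B\veg_{k+1}\|_1$; the goal is $s_k \le s_{k+1}$. Since $P$ is pointed we have $\rank\binom{A}{B} = n$, so $B\veg \neq \ve0$ for every nonzero $\veg \in \ker(A)$ and both quantities are well-defined; and since $\vex_{k+1} := \vex_k + \alpha_k\veg_k$ is an augmenting step, $\alpha_k > 0$ and $s_k < 0$ (if $s_k \ge 0$ there is nothing to augment at $\vex_k$). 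As $\veg_{k+1}$ is a steepest-descent circuit at $\vex_{k+1}$, it is strictly feasible there by \Cref{def:steepest}, so I would fix some $\epsilon > 0$ with $\vex_{k+1} + \epsilon\veg_{k+1} \in P$ and set $\veu := \alpha_k\veg_k + \epsilon\veg_{k+1}$. The point is that $\veu = (\vex_{k+1} + \epsilon\veg_{k+1}) - \vex_k$ is a difference of two points of $P$, so by convexity $\vex_k + t\veu \in P$ for all $t \in [0,1]$; hence, provided $\veu \neq \ve0$, the vector $\veu$ is a strictly feasible direction at $\vex_k$. The case $\veu = \ve0$ is trivial: it forces $\veg_{k+1}$ to be a negative multiple of $\veg_k$, so $s_{k+1} = -s_k > s_k$.

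With $\veu$ in hand, the next step is to apply \Cref{lem:steepest} at $\vex_k$ to $\veu$, giving $s_k \le \vecc^T\veu/\|B\veu\|_1$, equivalently $\vecc^T\veu \ge s_k\|B\veu\|_1$. Now I would expand $\vecc^T\veu = \alpha_k\vecc^T\veg_k + \epsilon\vecc^T\veg_{k+1} = \alpha_k s_k\|B\veg_k\|_1 + \epsilon\vecc^T\veg_{k+1}$ and use the triangle inequality $\|B\veu\|_1 \le \alpha_k\|B\veg_k\|_1 + \epsilon\|B\veg_{k+1}\|_1$ (valid since $\alpha_k,\epsilon > 0$). Because $s_k < 0$, multiplying the triangle bound by $s_k$ reverses it, so $s_k\|B\veu\|_1 \ge \alpha_k s_k\|B\veg_k\|_1 + \epsilon s_k\|B\veg_{k+1}\|_1$. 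Combining this with $\vecc^T\veu \ge s_k\|B\veu\|_1$, cancelling the common term $\alpha_k s_k\|B\veg_k\|_1$, and dividing by $\epsilon\|B\veg_{k+1}\|_1 > 0$ leaves $s_{k+1} \ge s_k$, as desired. Note that $\epsilon$ cancels, so the choice of step size into $\veg_{k+1}$ is immaterial.

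I do not expect a genuine obstacle here; the argument is short and rests entirely on \Cref{lem:steepest}. The points that will need care are the bookkeeping of inequality directions when multiplying through by the negative number $s_k$, the verification that $\|B\veu\|_1$ and $\|B\veg_{k+1}\|_1$ are strictly positive so the divisions are legitimate (this is exactly where pointedness is used), and the separate treatment of the degenerate case $\veu = \ve0$. It is worth remarking that, unlike the analogous lemma of De~Loera et al.\ \cite{dhl-15}, maximality of the augmentation step $\alpha_k$ plays no role in this proof --- only $\alpha_k > 0$ together with the defining optimality of $\veg_k$ at $\vex_k$; maximality will instead be needed for the sign-change behavior underlying \Cref{lem:sign_change} and \Cref{cor:steepest_descent_bound}.
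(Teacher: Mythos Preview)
Your argument is correct and follows essentially the same route as the paper: exhibit the combined direction $\alpha_k\veg_k + \beta\veg_{k+1}$ as a feasible direction at $\vex_k$, invoke \Cref{lem:steepest}, and finish with the triangle inequality together with $s_k<0$. The only cosmetic differences are that the paper argues by contradiction (assuming $s_k>s_{k+1}$) and uses $\beta=\alpha_{k+1}$ rather than an arbitrary $\epsilon>0$; your direct version is arguably cleaner, and your explicit treatment of the degenerate case $\veu=\ve0$ and of the positivity of $\|B\veu\|_1$ fills in small points the paper leaves implicit.
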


\begin{proof}
Suppose for the purpose of contradiction that $\vecc^T \veg_k / ||B \veg_k||_1 > \vecc^T \veg_{k+1} / ||B \veg_{k+1}||_1$. Then by expansion, substitution, sorting, and properties of the 1-norm, we obtain:
\begin{align*}
\vecc^T (\alpha_k \veg_k + \alpha_{k+1} \veg_{k+1}) &= \alpha_k ||B\veg_k||_1 \frac{\vecc^T \veg_k}{||B\veg_k||_1} + \alpha_{k+1} ||B\veg_{k+1}||_1 \frac{\vecc^T \veg_{k+1}}{||B\veg_{k+1}||_1} \\
&<\alpha_k ||B\veg_k||_1 \frac{\vecc^T \veg_k}{||B\veg_k||_1} + \alpha_{k+1} ||B\veg_{k+1}||_1 \frac{\vecc^T \veg_{k}}{||B\veg_{k}||_1} \\
&= (\alpha_k ||B\veg_k||_1 + \alpha_{k+1} ||B \veg_{k+1}||_1) \frac{\vecc^T \veg_k}{||B\veg_k||_1} \\
& \leq ||\alpha_k B \veg_k + \alpha_{k+1} B \veg_{k+1}||_1 \frac{\vecc^T \veg_k}{||B\veg_k||_1}.
\end{align*}
It follows that
\begin{align*}
\frac{\vecc^T (\alpha_k \veg_k + \alpha_{k+1} \veg_{k+1})}{||\alpha_k B \veg_k + \alpha_{k+1} B \veg_{k+1}||_1} < \frac{\vecc^T \veg_k}{||B\veg_k||_1}.
\end{align*} Since $\alpha_k \veg_k + \alpha_{k+1} \veg_{k+1}$ is itself a feasible direction at $\vex_k$, this inequality contradicts \Cref{lem:steepest}. 
\eoproof  \end{proof}

Next, a change of orthants of $B \veg_k$ implies a change in the steepness of the augmentation step. 

\begin{lemma}\label{lem:sign_change}
Let $LP = \min\{ \vecc^T \vex \colon A \vex = \veb, \ B\vex \leq \ved \}$ be a linear program over a pointed polyhedron with feasible solution $\vex_0$, and let $\alpha_1 \veg_1,...,\alpha_j \veg_j$ correspond to a sequence of steepest-descent augmentations applied beginning at $\vex_0$. If $B \veg_1$ and $B \veg_j$ do not belong to the same orthant of $\R^{m_B}$, then $\vecc^T \veg_1 / ||B\veg_1||_1 < \vecc^T \veg_j / ||B \veg_j||_1$. 
\end{lemma}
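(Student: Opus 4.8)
The plan is to argue by contradiction, combining the non-increasing behaviour of \Cref{lem:non_increasing} with the sharp lower bound of \Cref{lem:steepest}. Applying \Cref{lem:non_increasing} to each consecutive pair $\vex_{k-1}\to\vex_k\to\vex_{k+1}$ along the augmentation sequence yields the chain
\[
\frac{\vecc^T\veg_1}{||B\veg_1||_1} \;\le\; \frac{\vecc^T\veg_2}{||B\veg_2||_1} \;\le\; \cdots \;\le\; \frac{\vecc^T\veg_j}{||B\veg_j||_1},
\]
so it suffices to rule out equality between the first and last term. Suppose for contradiction that $\vecc^T\veg_1/||B\veg_1||_1 = \vecc^T\veg_j/||B\veg_j||_1$. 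Then the whole chain collapses to a single value $s$, and since every $\veg_k$ is a strictly improving direction we have $\vecc^T\veg_k<0$, hence $s<0$; moreover each $\veg_k$ is a nonzero element of $\ker(A)$ and $P$ is pointed, so $||B\veg_k||_1>0$ and $\vecc^T\veg_k = s\,||B\veg_k||_1$.

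Next I would pass to the aggregate direction $\veg := \sum_{k=1}^j \alpha_k\veg_k = \vex_j - \vex_0$. Since $P$ is convex and $\vex_j\in P$, $\veg$ is a feasible direction at $\vex_0$; since each step strictly decreases $\vecc^T\vex$, we have $\veg\neq\ve0$, and because $P$ is pointed ($\rank\binom{A}{B}=n$) with $\veg\in\ker(A)$ it follows that $B\veg\neq\ve0$, so $||B\veg||_1>0$. Using $\vecc^T\veg_k = s\,||B\veg_k||_1$ one computes $\vecc^T\veg = s\sum_k\alpha_k||B\veg_k||_1$, while the triangle inequality for the $1$-norm gives $||B\veg||_1 = ||\sum_k\alpha_k B\veg_k||_1 \le \sum_k\alpha_k||B\veg_k||_1$. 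As $s<0$, multiplying this inequality by $s$ reverses it, giving $\vecc^T\veg \le s\,||B\veg||_1$, i.e. $\vecc^T\veg/||B\veg||_1 \le s$. On the other hand $\veg_1$ is a steepest-descent circuit at $\vex_0$ with steepness $s$, so \Cref{lem:steepest} applied to the feasible direction $\veg$ gives $\vecc^T\veg/||B\veg||_1 \ge s$. Hence $\vecc^T\veg/||B\veg||_1 = s$, which forces equality in the triangle inequality above.

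The final step extracts a sign condition from that equality: $||\sum_k\alpha_k B\veg_k||_1 = \sum_k\alpha_k||B\veg_k||_1$ holds only if, for every coordinate $i\le m_B$, all nonzero entries among the $(\alpha_k B\veg_k)_i$ share a common sign. Since every $\alpha_k>0$, this forces $(B\veg_1)_i\,(B\veg_j)_i \ge 0$ for all $i$; that is, $B\veg_1$ and $B\veg_j$ are sign-compatible and therefore lie in a common orthant of $\R^{m_B}$, contradicting the hypothesis. Consequently the first and last steepness values cannot be equal, and with the non-decreasing chain this yields the strict inequality $\vecc^T\veg_1/||B\veg_1||_1 < \vecc^T\veg_j/||B\veg_j||_1$. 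I expect the only delicate points to be keeping the inequality directions straight once $s<0$ and justifying $B\veg\neq\ve0$ from pointedness; the $1$-norm equality characterisation is the routine heart of the argument, mirroring Lemma~6 of \cite{dhl-15} with $B\veg$ in place of $\veg$ throughout.
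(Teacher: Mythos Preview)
Your proposal is correct and follows essentially the same approach as the paper: both argue by contradiction using \Cref{lem:non_increasing} to collapse the steepness chain to a constant $s<0$, then combine the aggregate direction $\sum_k\alpha_k\veg_k$ with \Cref{lem:steepest} and the $1$-norm triangle inequality. The only cosmetic difference is that the paper uses the different-orthant hypothesis to obtain a \emph{strict} triangle inequality directly and then contradicts \Cref{lem:steepest}, whereas you first force equality in the triangle inequality via \Cref{lem:steepest} and then contradict the hypothesis; these are the same argument run in opposite directions, and your version is slightly more explicit about the side conditions ($s<0$, $B\veg\neq\ve0$).
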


\begin{proof}
Suppose for the purpose of contradiction that $\vecc^T \veg_1 / ||B\veg_1||_1 \geq \vecc^T \veg_j / ||B \veg_j||_1$. By \Cref{lem:non_increasing}, it then must hold that 
\begin{align*}
\frac{\vecc^T \veg_1}{||B\veg_1||_1} = \frac{\vecc^T \veg_2}{||B\veg_2||_1}  = \cdots = \frac{\vecc^T \veg_j}{||B\veg_j||_1} .
\end{align*}
Therefore,
\begin{align*}
\vecc^T \left( \sum_{i=1}^j \alpha_i \veg_i \right) = \sum_{i=1}^j \alpha_i ||B \veg_i||_1 \frac{\vecc^T \veg_i}{||B \veg_i||_1} 
&= \left( \sum_{i=1}^j \alpha_i ||B \veg_i||_1 \right) \frac{\vecc^T \veg_1}{||B \veg_1||_1} \\
&<  \left|\left| \sum_{i=1}^j \alpha_i B \veg_i \right|\right|_1 \frac{\vecc^T \veg_1}{||B \veg_1||_1},
\end{align*}
where the last inequality holds because $B \veg_1$ and $B\veg_j$ belong to different orthants of $\R^{m_B}$. Thus
\begin{align*}
\frac{\vecc^T \left( \sum_{i=1}^j \alpha_i \veg_i \right)}{\left|\left| B \left( \sum_{i=1}^j \alpha_i  \veg_i \right) \right|\right|_1} < \frac{\vecc^T \veg_1}{||B \veg_1||_1},
\end{align*}
and since $\sum_{i=1}^j \alpha_i \veg_i$ is itself a feasible direction at $\vex_0$, this again contradicts the choice of $\veg_1$ by \Cref{lem:steepest}. 
\eoproof  \end{proof}

It now follows that no circuit will be repeated as a steepest-descent direction, and the number of augmentations required in the generalized steepest-descent scheme is at most $|\mathcal{C}(A,B)|$.

\begin{corollary}\label{cor:steepest_descent_bound}
Let $LP = \min\{ \vecc^T \vex \colon A \vex = \veb, \ B\vex \leq \ved \}$ be a linear program over a pointed polyhedron with feasible solution $\vex_0$. In a steepest-descent circuit augmentation scheme using maximal steps starting at $\vex_0$, no circuit will be used more than once. It follows that the number of augmentations needed is at most $|\mathcal{C}(A,B)|$.
\end{corollary}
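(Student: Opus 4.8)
The plan is to transcribe the argument behind Corollary~7 of \cite{dhl-15} into this more general setting, with $B\veg$ playing the role that $\veg$ plays there. I would argue by contradiction: suppose some circuit of $\mathcal{C}(A,B)$ is used at two iterations $k<j$ of the scheme. Write $\veg_k,\veg_{k+1},\dots,\veg_j$ for the steepest-descent circuit directions chosen at these iterations, with maximal augmentations $\vex_{\ell+1}=\vex_\ell+\alpha_\ell\veg_\ell$, and (after rescaling within the positive ray, which changes none of the quantities below) assume $\veg_k=\veg_j$. Two preliminary facts are needed. First, every augmentation actually performed has a finite step length: at each iteration LP(\ref{equation:program}) either has optimal value $0$ --- in which case there is no feasible descent direction and the scheme stops at optimality --- or it returns a vertex solution with $\vecc^T\veg_\ell<0$; if the maximal step from such a $\veg_\ell$ were infinite, the objective would be unbounded below and the scheme would stop. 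Second, as in the proof of \Cref{thm:strictly_feasible}, a finite maximal step is $\alpha_\ell=\min\{(\ved_i-(B\vex_\ell)_i)/(B\veg_\ell)_i\colon(B\veg_\ell)_i>0\}$, so there is a \emph{blocking} index $i$ with $(B\veg_\ell)_i>0$ and $(B\vex_{\ell+1})_i=\ved_i$; and (again the criterion behind \Cref{thm:strictly_feasible}) a circuit $\veg$ is strictly feasible at $\vex$ if and only if $(B\veg)_i\le 0$ for every $i$ with $(B\vex)_i=\ved_i$.

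Next I would pin down the intermediate steps. By \Cref{lem:non_increasing}, $\vecc^T\veg_k/\|B\veg_k\|_1\le\vecc^T\veg_{k+1}/\|B\veg_{k+1}\|_1\le\cdots\le\vecc^T\veg_j/\|B\veg_j\|_1$, and since $\veg_k=\veg_j$ the two ends are equal, so all of these ratios coincide. Applying \Cref{lem:sign_change} to the sub-sequence of steepest-descent augmentations beginning at $\vex_k$ and ending at $\vex_\ell$, for each $k\le\ell\le j$: the steepness does not strictly increase along it, so $B\veg_\ell$ must lie in the same orthant of $\R^{m_B}$ as $B\veg_k$. In particular, fixing a blocking index $i$ for iteration $k$ (so $(B\veg_k)_i>0$), we get $(B\veg_\ell)_i\ge 0$ for all $\ell\in\{k,\dots,j\}$.

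Finally I would run an induction on $\ell=k+1,\dots,j$ to show $(B\vex_\ell)_i=\ved_i$: the base case is the fact that $i$ blocks the step at iteration $k$; and for $k+1\le\ell<j$, strict feasibility of $\veg_\ell$ at $\vex_\ell$ forces $(B\veg_\ell)_i\le 0$, which combined with $(B\veg_\ell)_i\ge 0$ gives $(B\veg_\ell)_i=0$, hence $(B\vex_{\ell+1})_i=(B\vex_\ell)_i+\alpha_\ell(B\veg_\ell)_i=\ved_i$. Thus $(B\vex_j)_i=\ved_i$, so strict feasibility of $\veg_j$ at $\vex_j$ requires $(B\veg_j)_i\le 0$; but $(B\veg_j)_i=(B\veg_k)_i>0$, a contradiction. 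Hence no circuit is used twice, and the scheme performs at most $|\mathcal{C}(A,B)|$ augmentations. I expect the main obstacle to be the bookkeeping around normalization rather than anything deep: the solver returns a representative with $\|B\veg_\ell\|_1=1$, whereas ``the same circuit is used twice'' refers to coincidence in $\mathcal{C}(A,B)$, so one must observe that sign patterns, supports, and the ratio $\vecc^T\veg/\|B\veg\|_1$ are all invariant under positive rescaling --- together with the need to invoke the scheme's stopping rule to guarantee that a blocking index exists at every performed step. Modulo those points, the argument is a faithful copy of the standard-form proof of \cite{dhl-15}.
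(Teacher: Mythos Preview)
Your proof is correct and shares its backbone with the paper's: both assume a repeated circuit $\veg_k=\veg_j$, use \Cref{lem:non_increasing} to equate all intermediate steepness values, and then invoke \Cref{lem:sign_change} to force every $B\veg_\ell$ with $k\le\ell\le j$ into the same orthant of $\R^{m_B}$. The difference lies in how the contradiction is extracted. You track a single blocking index $i$ from the $k$'th step and show by induction that constraint $i$ stays active at every subsequent iterate, so that $\veg_j=\veg_k$ cannot be strictly feasible at $\vex_j$. The paper instead appeals to the conformal-sum property: since all the $B\veg_\ell$ are sign-compatible and the full sum $\vex_k+\sum_{\ell=k}^{j}\alpha_\ell\veg_\ell$ is feasible, so is the partial sum $\vex_k+\alpha_k\veg_k+\alpha_j\veg_j=\vex_k+(\alpha_k+\alpha_j)\veg_k$, contradicting the maximality of $\alpha_k$ directly. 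The paper's route is shorter and avoids the case analysis around finite step lengths and blocking indices; your route is more self-contained and makes the role of maximality (via the blocked facet) completely explicit without relying on the feasibility of partial conformal sums. Either argument is perfectly adequate here.
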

\begin{proof}
Let $\alpha_1 \veg_1,...,\alpha_j \veg_j$ be a sequence of steepest-descent augmentations starting at $\vex_0$. Without loss of generality, it will suffice to show $\veg_1 \neq \veg_j$. Assume the contrary. Then $\vecc^T \veg_1 / ||B \veg_1||_1 = \vecc^T \veg_j / ||B \veg_j||_1$, and by \Cref{lem:sign_change}, each $B\veg_i$ must belong to the same orthant as $B \veg_1$. However, by properties of conformal sums, this implies that $\vex_0 + \alpha_1 \veg_1 + \alpha_j \veg_j$ is a feasible point of the polyhedron, contradicting the maximality of $\alpha_1$. 
\eoproof  \end{proof}

Hence, the generalized steepest-descent augmentation algorithm behaves the same as the standard form scheme of \cite{dhl-15}. Since we can efficiently compute a steepest-descent direction at each iteration using the polyehdral model, it follows that the algorithm runs in strongly polynomial time over a general polyhedron defined by a totally unimodular matrix. (See  Corollary~4 and the following discussion in \cite{dhl-15}.)

\begin{theorem}\label{thm:weakly_polynomial}
Let $LP = \min\{ \vecc^T \vex \colon A \vex = \veb, \ B\vex \leq \ved \}$ be a linear program over a pointed polyhedron where the constraint matrix $\binom{A}{B}$ is totally unimodular. Given a feasible solution $\vex_0$, the steepest-descent augmentation algorithm terminates in strongly polynomial time.
\end{theorem}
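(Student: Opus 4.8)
The plan is to bound the number of augmentation steps and the cost of each step separately, then combine. First I would invoke Corollary~\ref{cor:steepest_descent_bound}, which already guarantees that a maximal steepest-descent augmentation scheme terminates after at most $|\mathcal{C}(A,B)|$ steps without ever repeating a circuit direction. The work per step has two components: computing a steepest-descent circuit, and performing the maximal augmentation along it. Theorem~\ref{thm:steepest-descent} gives the former in weakly polynomial time via LP~(\ref{equation:program}), and computing the maximal step length $\alpha_k$ is a single ratio test over the inequalities $B\vex \leq \ved$, which is polynomial in the encoding size. So the only thing standing between this and a strongly polynomial bound is (i) controlling $|\mathcal{C}(A,B)|$, and (ii) making the per-step LP solve (and the arithmetic throughout) strongly polynomial rather than merely weakly polynomial, both of which should follow from total unimodularity of $\binom{A}{B}$.

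The key steps, in order. (1) Bound $|\mathcal{C}(A,B)|$ polynomially in the dimension when $\binom{A}{B}$ is totally unimodular: by Lemma~\ref{lem:rank}, every circuit direction arises as the (one-dimensional) kernel of a submatrix $\binom{A}{B'}$ of rank $n-1$, so circuits are indexed by subsets of the $m_B$ rows of $B$; since a TU matrix has all subdeterminants in $\{0,\pm 1\}$, the circuit components are bounded, and the number of distinct circuit directions is at most $2\binom{m_B}{n-1-\rank(A)}$, which—more to the point for strong polynomiality—means the running time bound must be phrased carefully. Actually the cleaner route, mirroring Corollary~4 of \cite{dhl-15}, is: each augmentation step strictly decreases the objective, the steepness values $\vecc^T\veg_k/\|B\veg_k\|_1$ are non-increasing (Lemma~\ref{lem:non_increasing}) and are ratios of subdeterminant-bounded quantities, and a change of orthant of $B\veg_k$ forces a strict decrease in steepness (Lemma~\ref{lem:sign_change}); since there are only finitely many orthants of $\R^{m_B}$ and within a fixed orthant no circuit repeats, one gets a bound on the number of steps that is polynomial in $n$ and $m_B$ alone. (2) Make each iteration strongly polynomial: LP~(\ref{equation:program}) has a totally unimodular-derived constraint matrix (the blocks $A$, $B$, $\pm I$), so by Tardos's algorithm for combinatorial LPs—or directly because the vertices of $P_{A,B}$ have polynomially-bounded encoding size—a vertex optimum is found in strongly polynomial time; the ratio test for $\alpha_k$ is likewise strongly polynomial. (3) Multiply: (number of steps, polynomial in $n,m_B$) times (strongly polynomial cost per step) yields a strongly polynomial overall bound, and cite the parallel discussion after Corollary~4 in \cite{dhl-15}.

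The main obstacle I expect is step~(2): verifying that the steepest-descent oracle LP can genuinely be solved in \emph{strongly} polynomial time. Theorem~\ref{thm:steepest-descent} only claims weakly polynomial, so one must argue separately that the presence of the normalizing equation $\|\vey^+\|_1 + \|\vey^-\|_1 = 1$ and the variable-fixing hyperplanes $\vey^+_i = 0$ does not destroy the combinatorial structure needed for Tardos-type methods—i.e., that the constraint matrix of LP~(\ref{equation:program}), after eliminating fixed variables and absorbing the single equality, still has all subdeterminants bounded by a constant (or is TU up to the normalization row). Since the normalization row is an all-ones vector on a sign-restricted block, this should go through, but it is the step that needs genuine care rather than a one-line citation. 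A secondary subtlety is ensuring the maximal step lengths $\alpha_k$ have encoding size that stays polynomially bounded along the whole run so that the iterate sizes do not blow up; total unimodularity again handles this, since each $\vex_k$ remains a vertex or near-vertex of a TU-defined polyhedron.
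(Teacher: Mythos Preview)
Your per-step analysis (step (2)) is essentially what the paper does: invoke Tardos's algorithm for combinatorial LPs to solve LP~(\ref{equation:program}) in strongly polynomial time, with a small additional step to extract a vertex solution. Your worry about the normalization row is legitimate but not fatal.

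The real gap is in step~(1), the iteration bound. Your argument reads: steepness is non-increasing, a change of orthant of $B\veg_k$ forces a strict drop in steepness, there are finitely many orthants, and within a fixed orthant no circuit repeats---hence polynomially many steps. This does not work as stated. There are $3^{m_B}$ orthants of $\R^{m_B}$, so ``finitely many orthants'' is exponential, not polynomial. The phrase ``ratios of subdeterminant-bounded quantities'' is also misleading: total unimodularity controls $\veg$ and $B\veg$ (both in $\{0,\pm1\}$), so $\|B\veg\|_1\le m_B$, but it says nothing about $\vecc^T\veg$, which can take on the order of $\|\vecc\|_1$ distinct integer values. The correct count is: the number of iterations is at most $m_B$ times the number of distinct steepness values, and the latter is at most $O(\|\vecc\|_1\cdot m_B)$, giving a bound of $\|\vecc\|_1\,m_B^2$. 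This depends on the magnitude of $\vecc$, so it is only weakly polynomial.

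The missing idea, which the paper supplies, is Frank--Tardos Diophantine approximation \cite{ft-87}: replace $\vecc$ by an integer vector $\vecc'$ with $\|\vecc'\|_1$ bounded polynomially in $n$ and $m_B$ that induces the same ordering on the (polynomially many) values $\vecc^T\veg$ over circuits $\veg\in\{0,\pm1\}^n$. This preprocessing is itself strongly polynomial, and after it the bound $\|\vecc'\|_1\,m_B^2$ becomes strongly polynomial. Without this step your argument does not close, and nothing else in the proposal substitutes for it.
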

\begin{proof}
A consequence of \Cref{lem:sign_change} is that the number of iterations of the steepest-descent augmentation algorithm is bounded by $m_B$ times the number of different values of $-\vecc^T \veg / ||B\veg||_1$ over all circuits $\veg \in \mathcal{C}(A,B)$. If $\binom{A}{B}$ is totally unimodular, it holds that $\veg \in \{0,1,-1\}^n$ and $B\veg \in \{0,1,-1\}^{m_B}$ for each circuit \cite{bv-17}. Hence, since $|\vecc^T \veg| \leq ||\vecc||_1$ and $||B\veg||_1 \leq m_B$, the number of augmentation steps is at most $||\vecc||_1 (m_B)^2$. Using Diophantine approximation \cite{ft-87}, we may replace $\vecc$ with an objective function $\vecc'$ whose entries are sufficiently small to define an equivalent problem whose corresponding bound $||\vecc'||_1 (m_B)^2$ on the number of steps is strongly polynomial. Furthermore, this new objective $\vecc'$ can be found in strongly polynomial time \cite{ft-87}. 

Finally, a steepest-descent circuit can be computed at each iteration by finding a vertex solution to LP (\ref{equation:program}) with objective $\vecc'$. Since $\binom{A}{B}$ is totally unimodular, Tardos' strongly polynomial algorithm for combinatorial linear programs \cite{t-86} yields an optimal solution in strongly polynomial time. If this solution is not a vertex, a reduction process analogous to that in the proof of Theorem 0.2 from \cite{m-91} can be used obtain a vertex optimal solution in strongly polynomially many steps. Therefore, the steepest-descent augmentation algorithm terminates in strongly polynomial time. 
\eoproof  \end{proof}

\subsubsection{Constructing Sign-compatible Sums and Circuit Walks}\label{sec:construct_sign_compatible_sums}

In this section, we show how to use the polyhedral model  from \Cref{sec:sign_compatible} together with the methods from \Cref{sec:steepest_descent} in the construction of sums of sign-compatible circuits. A particularly useful application of these sums is the construction of \textit{sign-compatible circuit walks} between solutions of an optimization problem that take steps in steepest-descent directions.

Given two vertices $\vev_1, \vev_2$ of a polyhedron $P = \{ \vex \in \R^n \colon A \vex = \veb, B \vex \leq \ved \}$, a \textit{circuit walk} $\vev_1 = \vey_0,\vey_1,...,\vey_k = \vev_2$ of length $k$ from $\vev_1$ to $\vev_2$ is a sequence of $k$ steps in which each step direction is a circuit of $P$: $\vey_{i+1} = \vey_i + \alpha_i \veg_i$ where $\veg_i \in \mathcal{C}(A,B)$ and $\alpha_i > 0$. For $i = 0,...,k$, if $\vey_i \in P$, the walk is said to be \textit{feasible}, and if $\vey_i + \alpha \veg_i \notin P$ for any $\alpha > \alpha_i$, the walk is \textit{maximal}. Finally, a circuit walk is said to be \textit{sign-compatible} if each circuit $\veg_i$ used in the walk is sign-compatible with the target direction $\vev_2 - \vev_1$ with respect to $B$. See \cite{bdf-16} for an exposition on different types of certain walks and their implications on the resulting circuit diameter.

An important open question in the study of circuit walks is whether or not the Circuit Diameter Conjecture \cite{bsy-18}, a relaxation of the famous Hirsch Conjecture, is true. That is, if $P$ is a polyhedron with $f$ facets, must there exist a maximal, feasible circuit walk between any pair of vertices with length at most $f - \dim(P)$? Although open in general, the conjecture is in fact true if the maximality requirement of the circuit walk is relaxed~\cite{bdf-16}. Given $\vev_1, \vev_2 \in P$, a sum of sign-compatible circuits $\vev_2 - \vev_1 = \sum_{i=1}^t \lambda_i \veg_i$ with $t \leq n - \rank(A)$ yields a desired feasible, sign-compatible circuit walk $\vev_1 = \vey_0,\vey_1,...,\vey_t = \vev_2$ by setting $\vey_j = \vev_1 + \sum_{i=1}^j \lambda_i \veg_i$ for $j = 0,...,t$. Furthermore, the steps of this walk may be permuted in any order and the walk will remain feasible \cite{bdf-16}. 

Thus, \Cref{prop:conformal_sum} implies that between any pair of vertices $\vev_1, \vev_2$ of a polyhedron $P$, there exists a sign-compatible circuit walk with length at most $n - \rank(A)$. Actually constructing such a walk would yield a short sequence of transitions from $\vev_1$ to $\vev_2$ using only the circuits of $P$. As seen in \cite{bdf-16,bv-17}, circuit walks in polyhedra from combinatorial optimization often have intuitive interpretations in terms of the underlying problem. Sign-compatible circuit walks exhibit additional desirable properties. For instance, in a standard form polyhedron, a sign-compatible circuit walk only ever increases (decreases) each variable if it needs to be increased (decreased) to reach the destination $\vev_2$. In the context of the partition polytopes of \cite{b-13,bv-17}, for example, this means that any item is exchanged at most once when transitioning between given  clusterings of a set. 

By generalizing proofs of \Cref{prop:conformal_sum} given in the literature \cite{g-75} for polyhedra in standard form, we obtain \Cref{alg:construct_sum} for constructing sums of sign-compatible circuits---or equivalently, sign-compatible circuit walks---in any general polyhedron. Informally, given a target direction $\veu \in \ker(A) \setminus \{\ve0\}$, the algorithm selects any circuit $\veg_i$ sign-compatible with $\veu$, takes a longest step with length $\lambda_i$ in the direction of $\veg_i$ such that the remaining difference $\veu - \lambda_i \veg_i$ remains sign-compatible with $\veu$, and then repeats with the direction $\veu - \lambda_i \veg_i$ until the remaining difference is zero. We prove the correctness of \Cref{alg:construct_sum} in  \Cref{thm:algorithm}.

\begin{algorithm}[h]
\caption{Constructing Sign-compatible Sums}\label{alg:construct_sum}
\begin{algorithmic}[1]
\Procedure{SignCompatibleSum}{$A,B,\veu$}\Comment{Assumes $\veu \in \ker(A)$}
\State Initialize: $\vew \gets \veu$, $i \gets 1$, and $D \gets \binom{A}{B}$
\State $Z(\vew) \gets \{j  \colon (D\vew)_j = 0 \}$
\If{$\rank(D_{Z(\vew)}) = n-1$}
\State $\veg_i \gets $ the circuit of $\mathcal{C}(A,B)$ corresponding to circuit direction $\vew$
\State $\lambda_i \gets \frac{||\vew||}{||\veg_i||}$
\State \textbf{return} $\veg_1,...,\veg_i, \lambda_1,...,\lambda_i$
\Else 
\State $\veg_i \gets $ any $\veg \in \mathcal{C}(A,B)$ where $\supp(B \veg) \subsetneq \supp(B \vew)$ and $B\veg$ is sign-compatible with $B \vew$
\State $\lambda_i \gets \min\{(B\vew)_j / (B \veg_i)_j \colon (B\vew)_j(B \veg_i)_j > 0 \}$
\State $\vew \gets \vew - \lambda_i \veg_i$
\State $i \gets i + 1$
\State \textbf{go to} 3
\EndIf
\EndProcedure
\end{algorithmic}
\end{algorithm}

\begin{theorem}\label{thm:algorithm}
For a pointed polyhedron $P = \{ \vex \in \R^n \colon A \vex = \veb, B \vex \leq \ved \}$ and a given $\veu \in \ker(A) \setminus \{\ve0\}$, \Cref{alg:construct_sum} can be used to construct a sign-compatible sum of circuits $\veu = \sum_{i=1}^t \lambda_i \veg_i$ with $t \leq n - \rank(A)$ in polynomial time.
\end{theorem}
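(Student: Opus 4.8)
The plan is to verify, in order, that the algorithm is well-defined, that it terminates, that its output is a sign-compatible sum, and that $t \le n - \rank(A)$; the running-time claim then follows by bounding the cost of a single iteration.

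First I would introduce the notation $\vew^{(i)}$ for the value of $\vew$ at the start of iteration $i$ (so $\vew^{(1)} = \veu$) and record a loop invariant: each $\vew^{(i)}$ lies in $\ker(A)$, is nonzero, and has $B\vew^{(i)}$ in the orthant of $B\veu$ (with possibly more zero entries). Preservation under the else-branch update $\vew^{(i+1)} = \vew^{(i)} - \lambda_i \veg_i$ is routine: membership in $\ker(A)$ is clear since $\veg_i \in \mathcal{C}(A,B) \subseteq \ker(A)$; and since $\veg_i$ is chosen with $\supp(B\veg_i) \subseteq \supp(B\vew^{(i)})$ and $B\veg_i$ sign-compatible with $B\vew^{(i)}$, the maximal step length $\lambda_i = \min\{(B\vew^{(i)})_j / (B\veg_i)_j : (B\vew^{(i)})_j (B\veg_i)_j > 0\}$ ensures no coordinate of $B\vew^{(i+1)}$ flips sign while the minimizing index $j^\ast_i$ --- which lies in $\supp(B\veg_i)$ --- drops out of the support. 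Hence $\supp(B\vew^{(i+1)}) \subsetneq \supp(B\vew^{(i)})$; since $\rank\binom{A}{B} = n$ makes $B$ injective on $\ker(A)$, this support can never become empty, so the loop performs at most $m_B$ else-iterations and must exit through the if-branch, where $\vew^{(t)}$ is a circuit direction (\Cref{lem:rank}) and $\lambda_t\veg_t = \vew^{(t)}$. Telescoping the updates gives $\veu = \sum_{i=1}^t \lambda_i \veg_i$ with every $\veg_i$ sign-compatible with $\veu$ with respect to $B$.

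The step I expect to need the most care is showing the else-branch can always find the required circuit: a $\veg \in \mathcal{C}(A,B)$ with $\supp(B\veg) \subsetneq \supp(B\vew^{(i)})$ and $B\veg$ sign-compatible with $B\vew^{(i)}$. Here I would invoke the conformal sum property. The else-branch is entered exactly when $\rank(D_{Z(\vew^{(i)})}) \ne n-1$, i.e., by \Cref{lem:rank}, when $\vew^{(i)}$ is not a circuit direction; so the sign-compatible decomposition of \Cref{prop:conformal_sum}, taken in the refined form with $\supp(B\veh_k) \centernot\subseteq \bigcup_{l>k} \supp(B\veh_l)$, has at least two terms, and its last term $\veh_s$ then necessarily satisfies $\supp(B\veh_s) \subsetneq \supp(B\vew^{(i)})$ --- otherwise every $B\veh_k$ would be supported inside $\supp(B\veh_s)$, contradicting the refinement property at $k=1$. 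So $\veg_i := \veh_s$ works. Equivalently, and this is what yields a polynomial-time implementation of the step, $\vew^{(i)}$ corresponds to a non-extreme ray of the face $C_{A,B,\vew^{(i)}}$ of \Cref{thm:sign-compatible_rays}, which therefore has dimension at least two, and one checks that every extreme ray of that face has $B$-support strictly inside $\supp(B\vew^{(i)})$; any such ray, found by a single linear-programming call on the bounded model $P_{A,B,\vew^{(i)}}$, gives a valid $\veg_i$.

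Finally, for the bound $t \le n - \rank(A)$, I would prove that $B\veg_1, \dots, B\veg_t$ are linearly independent. Along the run the orthant of $B\vew^{(i)}$ only shrinks, so every $B\veg_j$ with $j \ge i$ lies in the orthant of $B\vew^{(i)}$; as these all share one orthant there is no cancellation, giving $\supp(B\vew^{(i)}) = \bigcup_{j=i}^t \supp(B\veg_j)$. The dropped index $j^\ast_i$ then witnesses $\supp(B\veg_i) \centernot\subseteq \bigcup_{j>i}\supp(B\veg_j)$ (for $i = t$ this is immediate since $B\veg_t \ne \ve0$), and a triangularity argument on these witness coordinates yields linear independence. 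Since $B$ restricted to $\ker(A)$ is injective, its image has dimension $n - \rank(A)$, so $t \le n - \rank(A)$. The running-time claim follows: there are at most $n - \rank(A)$ iterations, each costing one rank computation plus one linear program over a face of the polyhedral model, and every $\veg_i$ and $\lambda_i$ has encoding size polynomial in the input (the circuits of $(A,B)$ being governed by subdeterminants of $\binom{A}{B}$). The main obstacle is making the support bookkeeping in this last paragraph precise --- in particular the no-cancellation claim and the triangularity that upgrades the support conditions to genuine linear independence.
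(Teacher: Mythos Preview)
Your argument is correct, but it takes a different route from the paper's in two places. For line~9 (producing a sign-compatible circuit with strictly smaller $B$-support), the paper gives a self-contained Gaussian-elimination construction: starting from $\vew$, it repeatedly finds a vector $\vez$ in $\ker(D_{Z(\vew)})$ that zeros out one additional coordinate of $B\vew$ while preserving sign-compatibility, until $\rank(D_{Z(\vez)}) = n-1$ and $\vez$ is a circuit direction. You instead invoke \Cref{prop:conformal_sum} for existence and the face $P_{A,B,\vew}$ of the polyhedral model for a polynomial-time LP computation. Both are valid; the paper's approach has the virtue of making \Cref{thm:algorithm} an \emph{independent} constructive proof of \Cref{prop:conformal_sum} (which is the stated motivation for the algorithm), while yours leans on the machinery already built in \Cref{sec:model,sec:sign_compatible} and dovetails directly with \Cref{cor:f_optimal_walks}.

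For the bound $t \le n - \rank(A)$, the paper tracks the quantity $\rank(D_{Z(\vew)})$, observing that it strictly increases at every else-step (the dropped index $j^\ast$ contributes a $B$-row not in the row span of $D_{Z(\vew)}$, since $B_{j^\ast}\vew \ne 0$) and is bounded between $\rank(A)$ and $n-1$. You instead establish linear independence of $B\veg_1,\dots,B\veg_t$ via the support-triangularity witnessed by the indices $j^\ast_i$. These are essentially dual bookkeeping devices for the same phenomenon; your version recovers, as a by-product, the refined form of \Cref{prop:conformal_sum} with $\supp(B\veg_i) \not\subseteq \bigcup_{j>i}\supp(B\veg_j)$, which the paper only cites.
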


\begin{proof}
By \Cref{lem:rank}, a vector $\vew \in \ker(A) \setminus \{\ve0\}$ is a circuit direction of $P$ if and only if it generates the kernel of a row-submatrix of $D = \binom{A}{B}$ with rank $n-1$; equivalently, we must have $\rank(D_{Z(\vew)}) = n-1$ where $Z(\vew)$ is the set of indices $j$ such that $(D \vew)_j = 0$. Thus, if $\rank(D_{Z(\vew)}) = n-1$, we have $\vew = \frac{||\vew||}{||\veg||} \veg$ for some circuit $\veg \in \mathcal{C}(A,B)$. 

On the other hand, if $\vew$ is not a circuit direction of $P$, we seek a circuit $\veg \in \mathcal{C}(A,B)$ sign-compatible with $\vew$ such that $Z(\veg) \supsetneq Z(\vew)$. Since we must have $\rank(D_{Z(\vew)}) \leq n-2$, there exist indices $j,k \notin Z(\vew)$ such that $\rank(D_{Z(\vew) \cup \{j,k\}}) = \rank(D_{Z(\vew)}) + 2 $, which can be found via Gaussian elimination. Furthermore, Gaussian elimination on this system will yield some $\vey \in \ker(D_{Z(\vew) \cup \{k\}}) \setminus \ker(D_{Z(\vew) \cup \{j\}})$. Without loss of generality, we may assume $(B\vew)_j (B\vey)_j > 0$.

By setting $\alpha := \min\{(B \vew)_\ell / (B \vey)_\ell \colon (B\vew)_\ell (B\vey)_\ell > 0\}$ and subtracting $\alpha \vey$ from $\vew$, we obtain a vector $\vez$ that is nonzero since $(B\vez)_k \neq 0$. Additionally, by choice of $\alpha$, $B\vez$ is sign-compatible with $B\vew$ and the support of $B\vez$ is strictly contained in that of $B \vew$. This implies that $\rank(D_{Z(\vez)}) > \rank(D_{Z(\vew)})$. Repeat this process at most $n - \rank(A)$ times until $\rank(D_{Z(\vez)}) = n-1$. Then $\vez$ must be the direction of a circuit $\veg \in \mathcal{C}(A,B)$ that is sign-compatible with $\vew$ and satisfies $Z(\veg) \supsetneq Z(\vew)$, as desired.

Given such a circuit $\veg$, subtract a suitable multiple of $\veg$ from $\vew$ to reduce the support of $B\vew$ while preserving sign-compatibility, i.e., set $\lambda := \min\{(B \vew)_j / (B \veg)_j \colon (B \vew)_j (B\veg)_j > 0\}$ and $\vew := \vew - \lambda \veg$. This increases the rank of $D_{Z(\vew)}$. Hence, we may repeat this process at most $n - \rank(A)$ times until $\vew$ is itself a circuit direction of $P$.
\eoproof  \end{proof}

The only open-ended step of \Cref{alg:construct_sum} is finding a sign-compatible circuit $\veg \in \mathcal{C}(A,B)$ in line 9. Although computing such a circuit in the naive manner of the above proof allows the algorithm to terminate in polynomial time, in applications we can be more deliberate when choosing a circuit. For example, suppose $\vev_1$ is an initial solution and $\vev_2$ is a known optimal solution to the linear program $\min\{ \vecc^T \vex \colon A \vex = \veb, \ B\vex \leq \ved \}$. We may wish to construct a sign-compatible circuit walk from $\vev_1$ to $\vev_2$ in which the objective function decreases as quickly as possible. For instance, when transitioning between the clusterings of a set given by the partition polytopes of \cite{b-13,bv-17}, such a walk would correspond to a sequence of exchanges in which each individual exchange reduces the objective function by a maximum possible value per item moved. We will call such a circuit walk a \textit{$\vecc$-steepest sign-compatible circuit walk}.

\begin{definition}\label{def:f_optimal}
Let $\vecc \in \R^n$. Given two points $\vev_1, \vev_2$ of a polyhedron $P = \{ \vex \in \R^n \colon A \vex = \veb, B \vex \leq \ved \}$, let  $\vev_1 = \vey_0,\vey_1,...,\vey_k = \vev_2$ be a sign-compatible circuit walk from $\vev_1$ to $\vev_2$ where $\vey_{i+1} = \vey_i + \alpha_i \veg_i$ for $i=0,...,k-1$. The walk is a \textit{$\vecc$-steepest sign-compatible circuit walk} if for $i=0,...,k-1$, circuit $\veg_i$ minimizes $\vecc^T \veu / ||B \veu||_1$ over all directions $\veu$ that are sign-compatible with the remaining difference $\vev_2 - \vey_i $ with respect to $B$ and satisfy $\supp(\veu) \subseteq \supp\left( \vev_2 - \vey_i \right)$. 
\end{definition}

Consider such a $\vecc$-steepest sign-compatible circuit walk $\vev_1 = \vey_0, \vey_1,...,\vey_k = \vev_2$ that uses circuits $\veg_0,...,\veg_{k-1}$ as step directions. At each $\vey_i$, the next step direction $\veg_{i}$ is a steepest-descent circuit that is sign-compatible with the remaining difference $\vev_2 - \vey_i$. Furthermore, the steepness $-\vecc^T \veg_i / ||B \veg_i||_1$ of the circuits used in the walk is non-increasing. By using the method from \Cref{sec:steepest_descent} for computing steepest-descent circuits in conjunction with \Cref{alg:construct_sum}, these walks can be constructed in polynomial time.

\begin{corollary}\label{cor:f_optimal_walks}
Let $\vev_1, \vev_2$ be any two points in a pointed polyhedron $P = \{ \vex \in \R^n \colon A \vex = \veb, B \vex \leq \ved \}$, and let $\vecc \in \R^n$. A $\vecc$-steepest sign-compatible circuit walk from $\vev_1$ to $\vev_2$ with at most $n - \rank(A)$ steps can be constructed in polynomial time.
\end{corollary}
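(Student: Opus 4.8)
I would realize the walk as the sign-compatible sum produced by \Cref{alg:construct_sum} applied to the target direction $\veu := \vev_2 - \vev_1 \in \ker(A)$, resolving the one open-ended choice of the algorithm (line~9) by an optimization over the polyhedral model of \Cref{sec:sign_compatible} instead of by arbitrary Gaussian elimination. Suppose the algorithm currently sits at $\vey_i$ with remaining difference $\vew := \vev_2 - \vey_i \neq \ve0$. If $\vew$ is itself a circuit direction of $P$, the termination branch (lines~4--7) of \Cref{alg:construct_sum} returns the lone scaled copy of $\vew$ and we are done; otherwise I would take $\veg_i$ to be a vertex optimum of the linear program
\begin{align*}
\min\Bigl\{\, \vecc^T\vex \;\colon\; (\vex, \vey^+, \vey^-)\in C_{A,B,\vew},\; \vex_j = 0 \text{ for all } j \text{ with } \vew_j = 0,\; ||\vey^+||_1 + ||\vey^-||_1 = 1 \,\Bigr\},
\end{align*}
where $C_{A,B,\vew}$ is the face of $C_{A,B}$ from \Cref{thm:sign-compatible_rays} (with $\vew$ in the role of $\veu$ there), and then scale $\veg_i$ to coprime integer components before handing it to line~9.

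\textbf{Why the LP returns the steepest admissible circuit.} The hyperplanes defining $C_{A,B,\vew}$ force $\supp(B\veg_i)\subseteq\supp(B\vew)$ together with sign agreement, i.e.\ sign-compatibility with $\vew$ with respect to $B$, and they kill every ray of the ``bad'' set $T$ of \Cref{thm:circuits_as_rays}; the extra equations $\vex_j=0$ additionally enforce $\supp(\veg_i)\subseteq\supp(\vew)$ in $\R^n$. Hence, arguing as for \Cref{thm:circuits_as_vertices}, every vertex of this polytope is a scaled element of $\mathcal{C}(A,B)$, and since $\vew$ is \emph{not} a circuit direction no such circuit can satisfy $\supp(B\veg_i)=\supp(B\vew)$ (that would make $B\vew$ support-minimal), so the strict inclusion demanded by line~9 of \Cref{alg:construct_sum} holds automatically and $\veg_i$ is a legal choice there. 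Conversely, any direction $\veu'$ over which \Cref{def:f_optimal} minimizes at $\vey_i$ --- $\veu'\in\ker(A)\setminus\{\ve0\}$, sign-compatible with $\vew$ w.r.t.\ $B$, with $\supp(\veu')\subseteq\supp(\vew)$ --- yields a feasible point $\bigl(\veu'/||B\veu'||_1,\cdot,\cdot\bigr)$ of the LP domain (here $||B\veu'||_1>0$ because $\rank\binom{A}{B}=n$), at which the objective equals $\vecc^T\veu'/||B\veu'||_1$. Exactly the scale-invariance computation from the proof of \Cref{thm:steepest-descent} then shows that the vertex minimizer $\veg_i$ (scaled so that $||B\veg_i||_1=1$) attains $\vecc^T\veg_i/||B\veg_i||_1 \le \vecc^T\veu'/||B\veu'||_1$ for every such $\veu'$; that is, $\veg_i$ is a $\vecc$-steepest sign-compatible circuit at $\vey_i$ in the sense of \Cref{def:f_optimal}.

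\textbf{Termination, feasibility, and cost.} Because each $\veg_i$ lies in $C_{A,B,\vew}$ we have $\supp(\veg_i)\subseteq\supp(\vew)$ and $B\veg_i$ in the same closed orthant as $B\vew$, so the update $\vew\mapsto\vew-\lambda_i\veg_i$ of lines~10--11 stays in $\ker(A)$, strictly shrinks $\supp(B\vew)$, and keeps all the $\veg_i$ sign-compatible with the original target $\vev_2-\vev_1$. Thus \Cref{thm:algorithm} applies unchanged: the process halts after $t\le n-\rank(A)$ iterations with $\vev_2-\vev_1=\sum_{i=1}^t\lambda_i\veg_i$, which by the discussion in \Cref{sec:construct_sign_compatible_sums} (following \cite{bdf-16}) defines a feasible sign-compatible circuit walk $\vev_1=\vey_0,\dots,\vey_t=\vev_2$; by construction it is $\vecc$-steepest, and an argument parallel to \Cref{lem:non_increasing} gives that the steepness $-\vecc^T\veg_i/||B\veg_i||_1$ is non-increasing along the walk. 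Each iteration costs one linear program whose coefficients are entries of $A$, $B$, $\vecc$ together with $0,\pm1$ (only the sign pattern of $B\vew$ and the support of $\vew$ enter the constraints), the bit-size of $\vew$ grows by at most a polynomial amount per step, and there are at most $n$ steps; extracting a vertex optimum via a vertex-returning method or a purification step as in the proof of \Cref{thm:weakly_polynomial}, the whole construction runs in polynomial time.

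\textbf{Main obstacle.} The genuinely delicate point is lining up the quantifier in \Cref{def:f_optimal} with the correct face of the model: one must impose sign-compatibility with respect to $B$ (the hyperplanes of \Cref{thm:sign-compatible_rays}, which conveniently also supply the $B$-support containment required for line~9 of \Cref{alg:construct_sum}) and $\R^n$-support containment ($\vex_j=0$) simultaneously, and then confirm that minimizing a linear functional over this face really does return a circuit attaining the minimum of $\vecc^T\veu/||B\veu||_1$ over the a priori larger set of all admissible directions $\veu$ --- together with checking, when $\vew$ is not a circuit direction, that the selected circuit is legal for line~9. Everything else is bookkeeping layered on \Cref{thm:algorithm} and the steepest-descent machinery of \Cref{sec:steepest_descent}.
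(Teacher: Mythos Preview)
Your overall plan is exactly the paper's: run \Cref{alg:construct_sum} on $\veu=\vev_2-\vev_1$ and, at line~9, pick the circuit by solving a linear program over the sign-compatible model $P_{A,B,\vew}$, then invoke \Cref{thm:algorithm} for termination and length. The paper's proof is precisely this, stated in two sentences.

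There is, however, a genuine gap introduced by your one deviation from the paper: the extra equalities $\vex_j=0$ for $j\notin\supp(\vew)$. These hyperplanes are \emph{not} among the defining inequalities of $C_{A,B}$ (the only inequalities there are $\vey^+,\vey^-\ge\ve0$), so intersecting with them does not produce a face of $C_{A,B}$, and the argument ``as for \Cref{thm:circuits_as_vertices}'' does not carry over. Concretely, the proof of \Cref{thm:circuits_as_rays} shows that a non-circuit ray $(\vex,\vey^+,\vey^-)$ is never extreme because some circuit $\veg$ with $\supp(B\veg)\subsetneq\supp(B\vex)$ witnesses a strictly smaller $(\vey^+,\vey^-)$-support; once you impose $\vex_j=0$, that witness $\veg$ need not satisfy $\veg_j=0$, so it may lie outside your restricted cone and the contradiction evaporates. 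For a small instance take $m_A=0$, $B=\begin{pmatrix}1&0&0\\0&1&0\\1&1&1\end{pmatrix}$, and $\vew=(1,1,0)$: your restricted cone has extreme rays with $\vex=(1,0,0)$ and $\vex=(0,1,0)$, neither of which is a circuit of $P$ (e.g.\ $B(1,0,0)=(1,0,1)$ is not support-minimal because $B(0,0,1)=(0,0,1)$). So a vertex optimum of your LP can fail to be a circuit at all, and hence is not a legal input to line~9 of \Cref{alg:construct_sum}.

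The paper sidesteps this by optimizing over $P_{A,B,\vew}$ \emph{without} the $\vex_j=0$ constraints; by \Cref{thm:sign-compatible_rays} every vertex there is already a genuine circuit that is sign-compatible with $\vew$, and (as you correctly note) the strict inclusion $\supp(B\veg)\subsetneq\supp(B\vew)$ needed for line~9 follows whenever $\vew$ is not itself a circuit direction. If you drop the $\vex_j=0$ equalities, your argument coincides with the paper's.
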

\begin{proof}
Use \Cref{alg:construct_sum} to construct a sum of sign-compatible circuits $\vev_2 - \vev_1 = \sum_{i=1}^t \lambda_i \veg_i$ with $t \leq n - \rank(A)$. However, at line 9 during each loop of the algorithm, compute a sign-compatible circuit by finding a vertex solution $(\veg, \vey^+, \vey^-)$ to the linear program  $LP(\vew) =   \min \{ \vecc^T \vex \colon  (\vex, \vey^+, \vey^-) \in P_{A,B,\vew} \} $. Since $P_{A,B, \vew}$ contains a representative $(\veu, \vey^+, \vey^-)$ for each direction $\veu$ with $\supp(\veu) \subseteq \supp(\vew)$ that is sign-compatible with $\vew$, the circuit $\veg$ minimizes $\vecc^T \veu / ||B \veu||_1$ over all such directions. It follows that the resulting circuit walk $\vev_1 = \vey_0, \vey_1,...,\vey_t = \vev_2$ formed by setting $\vey_j = \vev_1 + \sum_{i=1}^j \lambda_i \veg_i$ for $j = 0,...,t$ is a $\vecc$-steepest sign-compatible circuit walk.
\eoproof  \end{proof}


When a polyhedron is defined by a totally unimodular matrix, we can ensure that these $\vecc$-steepest sign-compatible circuit walks  satisfy an additional useful property. A circuit walk $\vev_1 = \vey_0, \vey_1,...,\vey_t = \vev_2$ is said to be \textit{integral} if each $\vey_j$ has integer components. In the context of combinatorial optimization, a circuit walk should be integral in order to have a natural interpretation in terms of the underlying problem. It is shown in \cite{bv-17} that a totally unimodular constraint matrix implies that any maximal circuit walk in the associated polyhedron is integral. However, a sign-compatible circuit walk constructed via \Cref{alg:construct_sum} need not be maximal. In fact, for some polyhedra there exist pairs of vertices which are not joined by any maximal sign-compatible circuit walk \cite{bdf-16}. Nevertheless, a totally unimodular constraint matrix implies that any sign-compatible circuit walk constructed by \Cref{alg:construct_sum} is indeed integral.

\begin{corollary}\label{cor:tu_walks}
Let $P = \{ \vex \in \R^n \colon A \vex = \veb, B \vex \leq \ved \}$ be an integral pointed polyhedron where $\binom{A}{B}$ is totally unimodular. Given any integral points $\vev_1, \vev_2$ in $P$, an integral, sign-compatible circuit walk from $\vev_1$ to $\vev_2$ with at most $n - \rank(A)$ steps can be constructed via \Cref{alg:construct_sum} in polynomial time.
\end{corollary}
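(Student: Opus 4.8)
The plan is to build directly on \Cref{thm:algorithm}. Applied to $\veu := \vev_2 - \vev_1 \in \ker(A) \setminus \{\ve0\}$, that theorem already guarantees that \Cref{alg:construct_sum} terminates in polynomial time and outputs a sign-compatible sum $\vev_2 - \vev_1 = \sum_{i=1}^t \lambda_i \veg_i$ with $t \leq n - \rank(A)$, hence the sign-compatible circuit walk $\vev_1 = \vey_0, \vey_1, \dots, \vey_t = \vev_2$ given by $\vey_j = \vev_1 + \sum_{i=1}^j \lambda_i \veg_i$. So the only thing left to prove is that each $\vey_j$ is integral, and this is exactly where total unimodularity of $\binom{A}{B}$ is used. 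First I would record two of its consequences: (i) every entry of $B$ lies in $\{0,1,-1\}$ (immediate, since $1 \cross 1$ subdeterminants of a totally unimodular matrix lie in $\{0,\pm1\}$), so $B$ maps integral vectors to integral vectors; and (ii) as already noted in the proof of \Cref{thm:weakly_polynomial} (citing \cite{bv-17}), every circuit $\veg \in \mathcal{C}(A,B)$ satisfies $\veg \in \{0,1,-1\}^n$ and $B\veg \in \{0,1,-1\}^{m_B}$.

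The heart of the argument is an induction on the loop counter of \Cref{alg:construct_sum} showing that the working vector $\vew$ stays integral and each computed step length $\lambda_i$ is a positive integer. Since at the start of the $i$-th iteration $\vew = \vev_2 - \vey_{i-1}$, integrality of $\vew$ throughout the run is equivalent to integrality of all the $\vey_j$'s. The base case holds because $\vew = \vev_2 - \vev_1 \in \Z^n$ by hypothesis. For the inductive step I would treat the two branches of the algorithm separately. In the non-terminating branch (lines 9--12), $\lambda_i = \min\{(B\vew)_j / (B\veg_i)_j \colon (B\vew)_j (B\veg_i)_j > 0\}$: by the inductive hypothesis $B\vew$ is integral by (i), and for every index $j$ occurring in this minimum we have $(B\veg_i)_j \in \{1,-1\}$ by (ii), so $(B\vew)_j/(B\veg_i)_j = |(B\vew)_j|$ is a positive integer; hence $\lambda_i \in \Z_{>0}$ and $\vew - \lambda_i \veg_i$ is again integral, since $\veg_i$ is integral. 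In the terminating branch (lines 4--7), $\vew$ is itself a circuit direction, so $\vew = \lambda_i \veg_i$ where $\veg_i \in \mathcal{C}(A,B)$ is the primitive $0/\pm1$ generator of the corresponding line; choosing a coordinate $k$ with $(\veg_i)_k = \pm1$ forces $\lambda_i = \pm(\vew)_k$, a positive integer. In either case $\vew$ remains integral, completing the induction; consequently each $\vey_j = \vev_1 + \sum_{i=1}^j \lambda_i \veg_i$ is an integer combination of the integral vectors $\vev_1, \veg_1, \dots, \veg_j$ and is therefore integral. Together with \Cref{thm:algorithm} this gives an integral, sign-compatible circuit walk of length $t \leq n - \rank(A)$ constructed in polynomial time.

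The main (and essentially only) obstacle is justifying that the step lengths $\lambda_i$ are integers, and this is where both halves of the total unimodularity hypothesis are needed at once: integrality of $B\vew$ relies on $B$ being a $0/\pm1$ matrix, while the denominator $(B\veg_i)_j$ being $\pm1$ relies on the circuits themselves being $0/\pm1$ vectors. I would also stress that this integrality is obtained \emph{without} the walk being maximal --- as the paper notes, some vertex pairs admit no maximal sign-compatible circuit walk at all --- so the conclusion here genuinely comes from the controlled ``largest step that keeps the remainder sign-compatible'' rule of \Cref{alg:construct_sum} rather than from a maximality argument in the style of \cite{bv-17}. Finally, since the choice of sign-compatible circuit in line 9 is free, this construction can be combined with the LP-based selection of \Cref{cor:f_optimal_walks} to produce an integral, $\vecc$-steepest sign-compatible circuit walk when $\binom{A}{B}$ is totally unimodular.
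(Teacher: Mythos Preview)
Your proposal is correct and follows essentially the same approach as the paper: invoke \Cref{thm:algorithm} for the sign-compatible sum and its length bound, then use total unimodularity (via $B\veg_i \in \{0,1,-1\}^{m_B}$) to argue inductively that each $\lambda_i$ computed by \Cref{alg:construct_sum} is a positive integer, forcing every $\vey_j$ to be integral. Your version is in fact more careful than the paper's, which only mentions $\lambda_i$ ``from line 10'' and glosses over the terminating branch (line 6); your explicit treatment of that case via a coordinate $k$ with $(\veg_i)_k = \pm 1$ fills a small gap in the paper's write-up.
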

\begin{proof}
Let $\veu := \vev_2 - \vev_1$, so that $\veu$ is integral. Use \Cref{alg:construct_sum} to construct a sign-compatible sum $\veu = \sum_{i=1}^t \lambda_i \veg_i$ of at most $n - \rank(A)$ circuits of $P$. Since $\binom{A}{B}$ is totally unimodular, it holds that $B \veg_i \in \{0,1,-1\}^{m_B}$ for $i=1,...,t$ \cite{bv-17}. It follows inductively that each $\lambda_i$ from line 10 of \Cref{alg:construct_sum} is an integer. Setting $\vey_j = \vev_1 + \sum_{i=1}^j \lambda_i \veg_i$ for $j = 0,...,t$ thus yields an integral, sign-compatible circuit walk from $\vev_1$ to $\vev_2$. 
\eoproof  \end{proof}

\section*{Acknowledgments}
We would like to thank Tamon Stephen for the helpful discussions. We also gratefully acknowledge support through the Collaboration Grant for Mathematicians \textit{Polyhedral Theory in Data Analytics} of the Simons Foundation.

\bibliography{literature}
\bibliographystyle{plain}

\end{document}